\newtheorem{thm}{Theorem}[section]
\newtheorem{lemm}[thm]{Lemma}
\newtheorem{prop}[thm]{Proposition}
\newtheorem{rem}[thm]{Remark}
\theoremstyle{plain}
\newcommand{\1}{\mbox{1}\hspace{-0.25em}\mbox{l}}
\keywords{Euclidean quantum field theory, stochastic quantization, renormalization, Gibbs measure.}
\subjclass[2020]{81T08, 60H15}
\begin{document}

\title[Gibbs measures associated to $P(\Phi)$ QFT model]{ Construction of the Gibbs measures associated with Euclidean quantum field theory with various polynomial interactions in the Wick renormalizable regime }

\author{Hirotatsu Nagoji}
\address{Graduate School of Science, Kyoto University, Kitashirakawa-Oiwakecho, Sakyo-ku, Kyoto 606-8502, Japan}
\email{nagoji.hirotatsu.63x@st.kyoto-u.ac.jp}
\date{}
\thanks{Statements and Declarations: This work was supported by JST SPRING, Grant Number JPMJSP2110. The author has no conflicts of interest directly relevant to the content of this article.}

\begin{abstract}
In this paper, we consider the Gibbs measures associated with Euclidean quantum field theory with polynomial-type of interactions on the torus. We observe the (non-)normalizability of the multivariate version of $P(\Phi)_2$ models by the variational method introduced in \cite{bara}. Moreover, we extend this setting to the ``fractional model'' in the ``Wick renormalizable regime'': the base Gaussian measure has the inverse of the fractional power of the Laplacian as its covariance with the restriction that the Wick products that appear during the renormalization procedure are all well-defined. After working on the issue in a general setting, we also examine some specific examples as an application. 
\end{abstract}

\maketitle

%\tableofcontents

\section{Introduction}
Let $\mu_{\alpha}$ be the Gaussian measure on $\mathcal{D}'(\mathbb{T}^d)$ with the covariance $(1-\Delta)^{-\alpha}$ and we consider the probability measure on $\mathcal{D}'(\mathbb{T}^d)^{\otimes n}$ formally written by
\begin{equation}\label{meas}
`` \mu^F_\alpha (d\Phi) = \frac{1}{Z}e^{ \int_{\mathbb{T}^d} F(\Phi(x))dx} \mu_{\alpha}^{\otimes n}(d\Phi)"
\end{equation}
where $Z$ is the normalizing constant, $F$ is an $n$-variate polynomial and $\mathbb{T}^d= (\mathbb{R}/\mathbb{Z})^d$ is the $d$-dimensional torus. The motivation for the construction of this kind of measures is mainly from quantum field theory. The most important and well-studied model is $\Phi^4_d$ model i.e. when $\alpha =1,\ n=1$, and $F(\Phi) = -\Phi^4$ in the above setting, see \cite{ak,bara,daprato,gh} and references therein, for example. It is straight forward to extend these results to the $n$-variate version of $\Phi^4$ model i.e. when $F(\Phi) = - |\Phi|^4$ where $|\cdot|$ denotes the Euclidian norm on $\mathbb{R}^n$, as commented in \cite[Remark 1.2]{gh}. When $\alpha =1$, the base Gaussian measure $\mu_{\alpha}$ coincides with Nelson's (massive) Gaussian free field. In this case, the models of Euclidian quantum field theory with polynomial-type of interaction like \eqref{meas} are called $P(\Phi)_d$ model.  

Depending on the value of $\alpha$, the expected nature of the measure $\mu_\alpha^F$ in \eqref{meas} can be classified as follows: Let $k$ be the degree of the polynomial $F$.
\begin{itemize}
\item
When $\alpha>\frac{d}{2}$, the density function $e^{ \int_{\mathbb{T}^d} F(\Phi(x))dx}$ is well-defined (in the usual sense) $\mu_{\alpha}^{\otimes n}$-almost-surely because $\mu_{\alpha}\left(L^\infty (\mathbb{T}^d)\right) = 1$. For example, $\Phi^p_1$ model is categorized here.
\item
When $\frac{k-1}{2k}d< \alpha \le \frac{d}{2}$, the density function $e^{ \int_{\mathbb{T}^d} F(\Phi(x))dx}$ is ill-defined because \\
$\mu_{\alpha}\left(H^{\alpha - \frac{d}{2}-\epsilon} (\mathbb{T}^d)\backslash H^{\alpha - \frac{d}{2}} (\mathbb{T}^d)\right) = 1$ for any $\epsilon>0$. However, it is known that we can give natural meaning to the density function by way of the Wick renormalization. To ensure the existence of the Wick powers appearing in the renormalization procedure, we need the restriction $\alpha> \frac{k-1}{2k}d$, see Proposition \ref{gono}. $\Phi^k_2$ model is an example of this case. In this case, the measure $\mu^F_\alpha$ is expected to be absolutely continuous with respect to the base measure $\mu_\alpha$.
\item
When $\alpha \le \frac{k-1}{2k}d$, the density function seems ill-defined even with the Wick renormalization, but there is still hope that the probability measure can be constructed by introducing further renormalization if needed as long as the associated (parabolic) dynamics 
\begin{equation}\label{papapa}
\partial_t \boldsymbol{u} + (1 - \Delta)^\alpha \boldsymbol{u} = (\nabla F)(\boldsymbol{u}) + \boldsymbol{\xi}, 
\end{equation}
where $\boldsymbol{\xi}= (\xi_1,\cdots,\xi_n)$ is an $n$-tuple of mutually independent space-time Gaussian white noises $\{ \xi_i\}_{1\le i \le n}$, is ``subcritical'' i.e. the equation can be solved via renormalization at least locally in time. The equation \eqref{papapa} is usually called (parabolic) stochastic quantization equation in this context. $\Phi^4_3$ model and $\Phi^3_3$ model, which is studied in \cite{phi3}, can be categolized in this case. The work \cite{gh} is mainly concerned with $\Phi^4_3$ model but they also considered the "fractional $\Phi^4$ model" , the case that the covariance of the base Gaussian measure $\mu_\alpha$ is $(1-\Delta)^{-\alpha}$ with $\alpha <1$. %From the previous results on $\Phi^4_3$ model and $\Phi^3_3$ model, 
In this case, it is expected that $\mu^F_{\alpha}$ will be singular with respect to the base measure $\mu_\alpha$ differently from the case $\frac{k-1}{2k}d< \alpha \le \frac{d}{2}$ and it is actually proven to be true in some sense, see \cite{HKN}.
\end{itemize}
In this paper, we consider the ``Wick renormalizable regime'' i.e. the case $\frac{k-1}{2k}d< \alpha \le \frac{d}{2}$. Moreover, we will not consider the dynamical problem \eqref{dyn}, but focus on the construction of the measure $\eqref{meas}$. Our strategy is based on the variational method, which is introduced in \cite{bara} and applied in the works such as \cite{hartree, phi3, heat}.

\subsection{Setting}\label{jai}
In the following, we write polynomial $F$ on $\mathbb{R}^n$ as
\[ F(\boldsymbol{x}) = \sum_{\beta\in A}c_\beta \boldsymbol{x}^\beta = \sum_{\beta=(\beta_1,\cdots, \beta_n)\in A}c_\beta \prod_{i=1}^n x_i^{\beta_i}, \ \ \ \ \ \  \boldsymbol{x}=(x_1, \cdots, x_n)\in \mathbb{R}^n\]
with $c_\beta \in \mathbb{R}\backslash \{0\}$ and $A\subset \mathbb{N}^n$. As mentioned above, we need to introduce the Wick renormalization in order to give meaning to the density function. We define $P_N$ by
\begin{equation}
P_N f \coloneqq  \sum_{l\in\mathbb{Z}^d,\ |l|\le N}\hat{f}(l)e_l, \ \ \ \mbox{for}\ f\in\mathcal{D}'(\mathbb{T}^d)
\end{equation}
where $e_l(x) \coloneqq e^{\sqrt{-1}l\cdot x}$ and $\hat{f}(l)\coloneqq \int_{\mathbb{T}^d} f(x)e_l(x)dx$ denotes the Fourier coefficient of $f$. Then, we define 
\begin{equation}\label{rndef}
R_N^F(\Phi) = R_N(\Phi)\coloneqq \int_{\mathbb{T}^d} :F(P_N^{\otimes n} \Phi): dx \coloneqq \int_{\mathbb{T}^d} \sum_{\beta\in A} c_\beta H_\beta \left( P_N^{\otimes n}\Phi ; \sigma_{\alpha, N} \right) dx,\ \ \ \mbox{for}\ \Phi\in \mathcal{D}'(\mathbb{T}^d)^{\otimes n}
\end{equation}
where 
\begin{gather}\label{dyn}
 \sigma_{\alpha, N} \coloneqq  \sum_{l\in\mathbb{Z}^d,\ |l|\le N}\frac{1}{(1+|l|^2)^\alpha} \asymp
\begin{cases}
 \log N  \ \ \ \ \ \ \ \mbox{when}\ \alpha = \frac{d}{2} \\
 N^{d-2\alpha} \ \ \ \ \ \mbox{otherwise} 
\end{cases}
\end{gather}
is the renormalization constant and $H_\beta$ is the $n$-variate version of the Hermite polynomial, see Section \ref{pre2} for the definition. Thanks to the restriction $\alpha > \frac{k-1}{2k}d$, one can prove that $R_N$ converges $\mu_{\alpha}^{\otimes n}$-almost surely as $N \rightarrow \infty$ and we write its limit as
\begin{equation}
R^F(\Phi) = R(\Phi) = \int_{\mathbb{T}^d} :F(\Phi):(x) dx,
\end{equation}
see Proposition \ref{gono}.
If there holds
\begin{equation}\label{papa}
\int e^{ \int_{\mathbb{T}^d} :F(\Phi):(x)dx} \mu_{\alpha}^{\otimes n}(d\Phi) < \infty,
\end{equation}
we can construct the probability measure $\mu_\alpha^F$ by setting
\begin{equation}\label{pro}
 \mu^F_\alpha (d\Phi) = \frac{1}{Z^F}e^{ \int_{\mathbb{T}^d} :F(\Phi):(x)dx} \mu_{\alpha}^{\otimes n}(d\Phi)
\end{equation}
where $Z^F$ is the normalizing constant defined by
\begin{equation}\label{pro2}
Z^F \coloneqq \int e^{ \int_{\mathbb{T}^d} :F(\Phi):(x)dx} \mu_{\alpha}^{\otimes n}(d\Phi).
\end{equation}
In this case, we say that the measure $\mu_{\alpha}^F$ is normalizable. If \eqref{papa} does not hold, we cannot construct such a probability measure and we say that $\mu_\alpha^F$ is non-normalizable.
In this paper, we consider under what conditions on $F$ $\mu_\alpha^F$ becomes (non-)normalizable. 

\subsection{Main results part 1}\label{jai2}
Before stating our main results, we briefly recall some known results. When $\alpha > \frac{d}{2}$, the density function in \eqref{meas} is well-defined so there is no need to introduce renormalization as mentioned ealier. Therefore, if we assume 
\begin{equation}\label{yuukai}
\sup_{\boldsymbol{x}\in \mathbb{R}^n} F (\boldsymbol{x}) < \infty ,
\end{equation}
the measure $\mu_\alpha^F$ in \eqref{meas} can be realized as a probability measure by choosing proper normalizing constant $Z$. Next, we consider the case $\alpha = \frac{d}{2}$. In this case, we need to introduce Wick renormalization in order to give rigorous meaning to the density function as explained in Section \ref{jai}. Then, even if we assume \eqref{yuukai}, it is not clear whether \eqref{papa} holds because
the condition \eqref{yuukai} does not ensure 
\begin{equation} 
\sup_{\Phi} \int_{\mathbb{T}^d} :F(\Phi):(x)dx < \infty
\end{equation}
due to the involvement of renormalization. However, when $F$ is a $1$-variate polynomial of degree 3 or larger, it is known that \eqref{yuukai} is actually necessary and sufficient condition for the normalizability i.e. \eqref{papa} holds if and only if $F$ is written as
\[ F(x) = \sum_{i=1}^k c_i x^i \] 
with an even integer $k$ and $c_k <0$, see also Section \ref{ex1}. Then, can this result be extended to the case where $F$ is an $n$-variate polynomial? Unfortunately, it turns out that \eqref{yuukai} does not necessarily guarantee the normalizability: Let us consider the case of $F(x_1, x_2)= - x_1^2 x_2^2$. Then, this example satisfies \eqref{yuukai}, but it turns out that there holds
\[ \int e^{ \int_{\mathbb{T}^d} :F(\Phi):(x)dx} \mu_{\frac{d}{2}}^{\otimes n}(d\Phi) = \infty , \]
see Section \ref{ex2}. As seen from this example, the situation in the multivariate case seems a little bit different from that of the 1-variate case. To find out the sufficient condition and necessary condition for \eqref{papa} to hold is the main concern of this paper.

Now, we state our main results.
We write for multi-indices $\alpha=(\alpha_1,\cdots, \alpha_n), \beta=(\beta_1,\cdots,\beta_n)\in\mathbb{N}^n$, 
\[ |\alpha| \coloneqq \alpha_1 + \cdots + \alpha_n, \] 
\[ \alpha < \beta \iff \alpha_i \le \beta_i\ \ \ \mbox{for any}\ 1\le i \le n \ \mbox{and}\ \alpha \neq \beta \] and define 
\begin{equation}
 A^- \coloneqq \{ \gamma \in \mathbb{N}^n \ ;\ \gamma < \beta \ \mbox{for some}\ \beta\in  A \}.
\end{equation}
We denote the convex hull of $S\subset \mathbb{R}^n$ by $\mbox{Conv}(S)$, see \eqref{conv}.
Moreover, given $q:A^- \rightarrow \mathbb{R}_+$, we define the map
$p^{\alpha, F, q}:  A^- \rightarrow \mathbb{R}_+$ by
\begin{equation}\label{ac1}
p^{\alpha, F, q}(\beta) \coloneqq  1 \vee \max_{\xi\in  A,\ \xi >\beta}\frac{2\alpha -(d -2\alpha)|\xi - \beta|}{2\alpha -(d-2\alpha)|\xi -\beta|b(\beta)} 
\end{equation}
with
\begin{equation}\label{ac2}
b(\beta) \coloneqq \max_{1\le i \le n} \min_{(\epsilon,\tilde \epsilon)\in R^i(\beta)}\left( \frac{1+\sum_{j=1}^n \epsilon_j}{2}+ \sum_{j=1}^n \frac{\tilde \epsilon_j}{ \kappa_j(q)} \right),
\end{equation}
\begin{equation}\label{kawadou}
\kappa_j(q)\coloneqq \max \left\{ d\ ;\ d\in \mathbb{R}_+,\ d e_j \in \mbox{\textup{Conv}}\left( \bigcup_{\beta\in A^-}\{ q(\beta)\beta\} \cup\{ \boldsymbol{0}\} \right) \right\},
\end{equation}
and
\begin{align} 
&\displaystyle R^i(\beta) \coloneqq \Bigg\{ (\epsilon,\tilde \epsilon) \in (\mathbb{N}^n)^2\ ;\ (\epsilon_j , \tilde \epsilon_j) = (0,0)\  \mbox{if}\ \beta_j =0,\ \frac{d-2\alpha}{2d}\sum^n_{j=1}\epsilon_j + \sum_{j=1}^n \frac{\tilde \epsilon_j}{\kappa_j(q)} \le \frac{1}{2}  \notag \\
&\ \ \ \ \ \ \ \ \ \ \ \ \ \ \ \ \ \ \ \ \ \ \ \ \ \ \ \ \ \ \ \ \ \ \ \ \ \ \ \ \ \ \ \ \ \ \ \mbox{and}\ \epsilon_j + \tilde \epsilon_j =
\left\{ \begin{array}{l}
\beta_j \ \ \ \ \ \  \mbox{for}\ j \neq i \\
\beta_j -1 \  \mbox{for}\ j = i 
\end{array}\right.\ \mbox{if}\ \beta_j \neq 0 \Bigg\}  \label{ac3} 
\end{align}
if $\beta_i \neq 0$ and $R^i(\beta)\coloneqq\{ 0\}\subset (\mathbb{N}^n)^2$ if $\beta_i = 0$.

\begin{thm}\label{norA}
Let $\frac{k-1}{2k}d < \alpha \le \frac{d}{2}$. 
If there exist some $q: A^- \rightarrow \mathbb{R}_+$ satisfying $q > p^{\alpha, F , q}$ and $m < \frac{1}{2}$ such that $F$ satisfies 
\begin{equation}\label{lalala}
\sup_{\boldsymbol{x}\in \mathbb{R}^n} \left(  F (\boldsymbol{x}) + \sum_{\beta\in A^-}|\boldsymbol{x}^\beta|^{q(\beta)} -m|\boldsymbol{x}|^2 \right)<  \infty,
\end{equation}
there holds
\[ \int e^{ \int_{\mathbb{T}^d} :F(\Phi):(x)dx} \mu_{\alpha}^{\otimes n}(d\Phi) < \infty . \]
\end{thm}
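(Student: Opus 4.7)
The plan is to apply the Bou\'e--Dupuis variational formula in the form used in \cite{bara}. Let $W_t$ be a Brownian evolution with $W_1 \sim \mu_\alpha^{\otimes n}$ and let $I(v)$ denote the Cameron--Martin drift associated to a progressively measurable $v \in L^2([0,1]; L^2(\mathbb{T}^d)^n)$, satisfying the energy bound $\|I(v)\|_{H^\alpha}^2 \le \|v\|_{L^2}^2$. The formula reads
\[
-\log \int e^{R_N(\Phi)}\mu_\alpha^{\otimes n}(d\Phi)
 = \inf_v \mathbb{E}\Big[-R_N(W_1 + I(v)) + \tfrac{1}{2}\int_0^1\|v_s\|_{L^2}^2\,ds\Big],
\]
so the goal reduces to a uniform upper bound $\mathbb{E}[R_N(W_1 + I(v))] \le \tfrac{1}{2}\mathbb{E}\int\|v\|_{L^2}^2\,ds + C$, independent of $N$ and $v$; passing $N \to \infty$ (using Proposition~\ref{gono} and Fatou) then gives $Z^F<\infty$.

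The key computation is the multivariate Hermite binomial
\[
H_\beta(W_1^N + I^N(v);\sigma_{\alpha,N}) = \sum_{\gamma \le \beta}\binom{\beta}{\gamma}H_{\beta-\gamma}(W_1^N;\sigma_{\alpha,N})\,I^N(v)^\gamma,
\]
where $W_1^N := P_N^{\otimes n}W_1$ and $I^N(v) := P_N^{\otimes n}I(v)$. Inserted into $R_N$, this splits $R_N(W_1+I(v))$ into a diagonal part $\int F(I(v))\,dx$ (the $\gamma = \beta$ contribution) and cross terms
\[
\mathcal{C}_{\beta,\gamma} := \int_{\mathbb{T}^d}\Theta^N_{\beta-\gamma}(x)\,I^N(v)(x)^\gamma\,dx,\qquad \gamma < \beta \in A,\ \gamma \in A^-,
\]
with $\Theta^N_{\beta-\gamma} := H_{\beta-\gamma}(W_1^N;\sigma_{\alpha,N})$. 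By Proposition~\ref{gono} and Gaussian hypercontractivity, $\Theta^N_{\beta-\gamma}$ is bounded uniformly in $N$ in $B^{-s}_{p,p}(\mathbb{T}^d)$ for every $s > \tfrac{d-2\alpha}{2}|\beta-\gamma|$ and every $p<\infty$. For the diagonal part, hypothesis \eqref{lalala} yields
\[
\int F(I(v))\,dx \le m\|I(v)\|_{L^2}^2 - \sum_{\gamma\in A^-}\|I(v)^\gamma\|_{L^{q(\gamma)}}^{q(\gamma)} + C,
\]
and the $m\|I(v)\|_{L^2}^2$ piece is absorbed into $\tfrac{1}{2}\|v\|^2$ because $m<1/2$ and $\|I(v)\|_{L^2}\le \|v\|_{L^2}$.

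Each cross term $\mathcal{C}_{\beta,\gamma}$ must then be bounded by a small multiple of $\|v\|^2 + \sum_{\gamma'\in A^-}\|I(v)^{\gamma'}\|_{L^{q(\gamma')}}^{q(\gamma')}$. One pairs $\Theta^N_{\beta-\gamma}$ against $I(v)^\gamma$ by Besov duality, factorizes $I_j(v)^{\gamma_j} = I_j(v)^{\epsilon_j}\cdot I_j(v)^{\tilde\epsilon_j}$ for $(\epsilon,\tilde\epsilon) \in R^i(\gamma)$, controls the $\epsilon$--factor by interpolation between $L^\infty$ and $H^\alpha$ (costing a power $\|v\|^{\sum \epsilon_j}$), and the $\tilde\epsilon$--factor via the embedding encoded in \eqref{kawadou}: the very definition of $\kappa_j(q)$ guarantees, by convexity, that $\|I_j\|_{L^{\kappa_j(q)}}$ is controlled by the coercive $L^{q(\gamma')}$--norms from the diagonal bound. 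The linear constraint in $R^i(\gamma)$ is precisely the regularity/$L^p$ budget for the Besov pairing, and the resulting Young exponent is the quantity $b(\gamma)$ of \eqref{ac2}; Young's inequality then absorbs the cross term whenever $q(\gamma) > p^{\alpha,F,q}(\gamma)$, which is the standing hypothesis.

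The main obstacle is the combinatorial bookkeeping of this last step: for every pair $(\beta,\gamma)$ one must simultaneously respect the regularity deficit $2\alpha-(d-2\alpha)|\beta-\gamma|$ of $\Theta^N_{\beta-\gamma}$, the Sobolev-/$L^p$--pairing constraint in $R^i(\gamma)$, and the Orlicz--type convex--hull geometry of $\{q(\beta')\beta'\}_{\beta'\in A^-}$ implicit in $\kappa_j(q)$. Maximizing over $\xi = \beta > \gamma$ in $A$ produces the fraction inside \eqref{ac1}, while the ``$1 \vee$'' in $p^{\alpha,F,q}$ covers those $\gamma \in A^-$ lying below no element of $A$. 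Once each cross term is absorbed, summation over $(\beta,\gamma)$ and passage to the limit $N \to \infty$ complete the proof.
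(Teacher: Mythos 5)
Your proposal is correct and follows essentially the same route as the paper's proof: the Bou\'e--Dupuis reduction, the Hermite binomial splitting into a diagonal part controlled by \eqref{lalala} (with the $m\lvert\cdot\rvert^2$ piece absorbed into the quadratic cost since $m<\tfrac12$), and the cross terms handled by duality against the Wick powers, interpolation, the fractional Leibniz decomposition indexed by $R^i$, the convex-hull control of $\Vert I_j\Vert_{L^{\kappa_j(q)}}$ (the paper's Lemma \ref{tuitui}), and Young's inequality under $q>p^{\alpha,F,q}$. You correctly identify where each ingredient of the definitions \eqref{ac1}--\eqref{ac3} and \eqref{kawadou} enters, so no substantive gap remains beyond the bookkeeping you flag.
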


\begin{rem}
The definition of the function $p^{\alpha,F,q}:  A^- \rightarrow \mathbb{R}_+$ appearing in Theorem \ref{norA} is a bit complicated, but  when $\alpha = \frac{d}{2}$, it takes quite a simple form: $p^{\alpha,F,q} \equiv 1$, see also Theorem \ref{nornon} below. 
\end{rem}

\begin{rem}
The condition on $m$ comes from the covariance $(1-\Delta)^{-\alpha}$ of the Gaussian field $\mu_\alpha$. If we consider $(m_0^2-\Delta)^{-\alpha}$ with $m_0^2>0$ instead of $(1-\Delta)^{-\alpha}$ as the covariance of $\mu_\alpha$, the condition $m<\frac{1}{2}$ in Theorem \ref{norA} is replaced with $m<\frac{m_0^2}{2}$, see also Remark \ref{2jou}.
\end{rem}

\begin{rem}
It is easy to see that the assumption on $F$ in Theorem \ref{norA} is equivalent to the following condition:
\begin{equation}
 F (\boldsymbol{x})  - m|\boldsymbol{x}|^2 \le -\sum_{\beta \in  A^-} |\boldsymbol{x}^\beta|^{q(\beta)} + C
\end{equation}
uniformly in $\boldsymbol{x}\in \mathbb{R}^n$ for some $q > p^{\alpha,F, q}$ and $C>0$. See Section \ref{exel} for other characterizations of this condition in some specific cases.
\end{rem}

Next, we give a criterion for the non-normalizability of the measure $\mu^F_\alpha$.

\begin{thm}\label{nonA}
Let $\frac{k-1}{2k}d < \alpha < \frac{d}{2}$. If there exist some $q: A^- \rightarrow \mathbb{R}_+$, $\boldsymbol{a}=(a_1,\cdots , a_n)\in \mathbb{R}^n$, $\boldsymbol{r}= (r_1, \cdots, r_n)\in \mathbb{N}^n$ satisfying the inequality \eqref{paka} below and $m>\frac{1}{2}$ such that 
\begin{equation}
\sup_{x\in\mathbb{R}_+}\left( F (\boldsymbol{a}_{\boldsymbol{r}}(x)) + \sum_{\beta \in  A^-} |\boldsymbol{a}_{\boldsymbol{r}}(x)^\beta|^{q(\beta)} -m|\boldsymbol{a}_{\boldsymbol{r}}(x)|^2  \right) = \infty,
\end{equation}
there holds
\[ \int e^{ \int_{\mathbb{T}^d} :F(\Phi):(x)dx} \mu_{\alpha}^{\otimes n}(d\Phi) = \infty, \]
where $\boldsymbol{a}_{\boldsymbol{r}}(x)  \coloneqq (a_1x^{r_1}, \cdots, a_n x^{r_n})\in\mathbb{R}^n$ and 
\begin{equation}\label{paka}
\min_{(\eta\in A^-,\ \boldsymbol{a}^\eta \neq 0)} \max_{(\beta\in A^-,\ \xi\in  A,\ \xi > \beta,\ \boldsymbol{a}^{\beta}\neq 0 )}  \left\{ \frac{|\xi - \beta|}{2}(d-2\alpha) + \frac{\beta\cdot \boldsymbol{r}}{q (\eta)(\eta \cdot \boldsymbol{r})}d  \right\}> d.
\end{equation}
\end{thm}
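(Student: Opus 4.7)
I would proceed via the variational/Boué--Dupuis method of \cite{bara}. The plan is to realize the base Gaussian field as the terminal value $Y \sim \mu_\alpha^{\otimes n}$ of an $(1-\Delta)^{-\alpha/2}$-cylindrical Brownian motion and apply the one-sided variational lower bound
\[ \log Z^F_N \;\ge\; \mathbb{E}\!\left[ R_N(Y + I(u)) - \tfrac{1}{2}\int_0^1\|u_s\|^2\,ds\right] \]
for the truncated partition function $Z^F_N := \int e^{R_N(\Phi)}\,\mu_\alpha^{\otimes n}(d\Phi)$ and any admissible drift $u$. To prove non-normalizability it is enough to exhibit a family $(u^T)_{T>0}$ of deterministic drifts along which the right-hand side diverges as $T \to \infty$, and then let $N \to \infty$ and apply Fatou using the $\mu_\alpha^{\otimes n}$-a.s. convergence $R_N \to R$ from Proposition \ref{gono}.

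Concretely, pick a sequence $x_k \to \infty$ realizing the supremum in the hypothesis and set $T := x_k$. I would choose the drift so that $\Theta^T := I(u^T)$ has the form
\[ \Theta^T \;=\; \boldsymbol{a}_{\boldsymbol{r}}(T) + \Psi^T, \]
where the constant piece is the target vector from the hypothesis and $\Psi^T$ is a carefully designed superposition of coordinate-wise high-frequency bumps, one for each $\beta \in A^-$ with $\boldsymbol{a}^\beta \neq 0$, placed at a frequency scale tuned so that its contribution to the Wick polynomial $\int :\!(\Theta^T)^\xi\!:\,dx$ for some $\xi > \beta$ in $A$ produces a positive boost of size $|\boldsymbol{a}_{\boldsymbol{r}}(T)^\beta|^{q(\beta)}$. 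Expanding via $H_\xi(Y+\Theta^T;\sigma_{\alpha,N}) = \sum_{\gamma \le \xi}\binom{\xi}{\gamma}(\Theta^T)^{\xi-\gamma}H_\gamma(Y;\sigma_{\alpha,N})$, the $\gamma=0$ contributions assemble into $\int F(\Theta^T)\,dx$ and, after the above tuning, deliver $F(\boldsymbol{a}_{\boldsymbol{r}}(T)) + \sum_{\beta\in A^-}|\boldsymbol{a}_{\boldsymbol{r}}(T)^\beta|^{q(\beta)}$ up to a bounded error, while the $\gamma \neq 0$ terms have vanishing mean.

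The cost of the drift splits additively across the frequency scales of $\Theta^T$; the constant piece contributes exactly $\tfrac{1}{2}|\boldsymbol{a}_{\boldsymbol{r}}(T)|^2$ to $\tfrac{1}{2}\|u^T\|^2$ (since constants are eigenfunctions of $(1-\Delta)^\alpha$ with eigenvalue one), and the bumps in $\Psi^T$ only add a lower-order correction proportional to $|\boldsymbol{a}_{\boldsymbol{r}}(T)|^2$, so that the total penalty can be written as $m|\boldsymbol{a}_{\boldsymbol{r}}(T)|^2$ for some $m > \tfrac{1}{2}$. Assembling these bounds yields, uniformly in $N$ large,
\[ \log Z^F_N \;\ge\; F(\boldsymbol{a}_{\boldsymbol{r}}(T)) + \sum_{\beta\in A^-}|\boldsymbol{a}_{\boldsymbol{r}}(T)^\beta|^{q(\beta)} - m|\boldsymbol{a}_{\boldsymbol{r}}(T)|^2 - C, \]
which diverges along $x_k$ by the hypothesis. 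The hard part will be the simultaneous design of the bumps in $\Psi^T$: each bump exponent must balance two scaling requirements at once---its $\xi$-Wick power for $\xi > \beta$ should deliver a factor of size $|\boldsymbol{a}_{\boldsymbol{r}}(T)^\beta|^{q(\beta)}$, while its $H^\alpha$-energy remains within the quadratic budget. The inequality \eqref{paka}, which combines the Wick scaling exponent $\tfrac{|\xi-\beta|}{2}(d-2\alpha)$ against the growth rate $\tfrac{\beta\cdot\boldsymbol{r}}{q(\eta)(\eta\cdot\boldsymbol{r})}d$, is precisely the feasibility condition that makes this tuning possible.
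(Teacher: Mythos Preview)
Your approach diverges from the paper's in a way that leaves a genuine gap. The paper does \emph{not} use a deterministic drift: it takes the (progressively measurable, random) drift $\theta^M(t) = -\langle\nabla\rangle^{\alpha}\boldsymbol{Z}_M^\eta(t) + \boldsymbol{a}_{\boldsymbol{r}}(M^{(d+\delta)/\kappa_1}) + \sigma_{\alpha,M}^{1/2}\boldsymbol{b}$, whose first piece \emph{cancels} the Gaussian field up to time $\eta$, so that $\mathbb{Y}+I(\theta^M)=\mathbb{Y}-\mathbb{Y}_M(\eta)+\boldsymbol{a}_M+\sigma_{\alpha,M}^{1/2}\boldsymbol{b}$. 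The crucial ``extra'' contribution does not come from bumps in $F(\Theta^T)$; it comes from the fact that after this cancellation the cross terms $\mathbb{E}\big[H_\gamma(\mathbb{Y}_M-\mathbb{Y}_M(\eta)+\sigma_{\alpha,M}^{1/2}\boldsymbol{b};\sigma_{\alpha,M})\big]=\sigma_{\alpha,M}^{|\gamma|/2}H_\gamma(\boldsymbol{b};\eta)$ are nonzero, and scale like $M^{\frac{|\gamma|}{2}(d-2\alpha)}$ because $\sigma_{\alpha,M}\asymp M^{d-2\alpha}$. That is precisely where the exponent $\tfrac{|\xi-\beta|}{2}(d-2\alpha)$ in \eqref{paka} comes from: it is the variance scaling of the Gaussian, not a feasibility condition for deterministic bumps. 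In your scheme the $\gamma\neq 0$ terms vanish in expectation, so this mechanism is simply absent.

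Two further problems follow. First, the cost budget: the paper pays $\sim M^d$ for the Gaussian cancellation (Lemma~\ref{hq3}) and recovers it through the cross-term gain $\sim M^{\kappa_4}$ with $\kappa_4>d$ (which is exactly \eqref{paka} in the critical case $\kappa_1=\kappa_2$); your claim that the bump cost is merely a perturbation of $\tfrac12|\boldsymbol{a}_{\boldsymbol{r}}(T)|^2$ is not compatible with high-frequency oscillations producing the required gain. Second, the sign: the paper chooses $(\eta,\boldsymbol{b})$ so that the leading Hermite polynomial $\sum c_\beta\binom{\beta}{\gamma}H_\gamma(\boldsymbol{b};1)\boldsymbol{a}^{\beta-\gamma}$ takes the correct sign, and this is guaranteed because Hermite polynomials of positive degree have zero Gaussian mean (hence the expression is not sign-definite). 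With $\eta=0$ one gets monomials $\boldsymbol{b}^\gamma$ instead, for which no such argument is available---if all maximizing $\gamma$ are componentwise even and the coefficients conspire, the polynomial can be sign-definite the wrong way. You should rework the proof around the random cancellation drift; the deterministic-bump picture does not carry the argument.
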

\begin{rem}
For example, $q\equiv 1$ satisfies \eqref{paka} when $\alpha < \frac{d}{2}$ if $\{ \eta\in A^-;\ \boldsymbol{a}^\eta \neq 0  \}\neq \emptyset$ and $\{ (\beta, \xi)\in  A^- \times A;\ \xi > \beta,\ \boldsymbol{a}^{\beta}\neq 0 \} \neq \emptyset$.
\end{rem}

Note that the case $\alpha = \frac{d}{2}$ is excluded in Theorem \ref{nonA}. Actually, the statement of this theorem is also true for the case $\alpha = \frac{d}{2}$, but there holds a slightly stronger claim in this case.
\begin{thm}\label{nornon}
Let $\alpha = \frac{d}{2}$.
\begin{enumerate}
\item 
If there exist some $q: A^- \rightarrow \mathbb{R}_+$ with $q > 1$ and $m <\frac{1}{2}$ such that $F$ satisfies 
\begin{equation}\label{nornon8}
\sup_{\boldsymbol{x}\in \mathbb{R}^n} \left(  F (\boldsymbol{x}) + \sum_{\beta\in A^-}|\boldsymbol{x}^\beta|^{q(\beta)} -m|\boldsymbol{x}|^2 \right)<  \infty,
\end{equation}
there holds
\[ \int e^{ \int_{\mathbb{T}^d} :F(\Phi):(x)dx} \mu_{\alpha}^{\otimes n}(d\Phi) < \infty. \]
\item
If there exist some  $\boldsymbol{a}=(a_1,\cdots , a_n)\in \mathbb{R}^n$, $\boldsymbol{r}= (r_1, \cdots, r_n)\in \mathbb{N}^n$, $m>\frac{1}{2}$ and $C>0$ such that 
\begin{equation}\label{nornon9}
\sup_{x\in\mathbb{R}_+}\left( F (\boldsymbol{a}_{\boldsymbol{r}}(x)) + C\sum_{\beta \in A^-} |\boldsymbol{a}_{\boldsymbol{r}}(x)^\beta| -m|\boldsymbol{a}_{\boldsymbol{r}}(x)|^2   \right) = \infty,
\end{equation}
then, there holds
\[ \int e^{ \int_{\mathbb{T}^d} :F(\Phi):(x)dx} \mu_{\alpha}^{\otimes n}(d\Phi) = \infty. \]
%where $\boldsymbol{1}: \tilde A^- \rightarrow \mathbb{R}$ is a constant function $\boldsymbol{1} \equiv 1$.
\end{enumerate}
\end{thm}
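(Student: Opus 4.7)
The plan is to apply the variational (Boué--Dupuis) proof of Theorems \ref{norA} and \ref{nonA} at the borderline value $\alpha=\tfrac{d}{2}$, where two simplifications occur: the auxiliary function $p^{\alpha,F,q}$ in \eqref{ac1} collapses to the constant $1$ (the factor $(d-2\alpha)|\xi-\beta|$ vanishes), matching the hypothesis $q>1$ of part (i), and the non-normalizability condition \eqref{paka} can be replaced by the explicit quadratic threshold $m>\tfrac{1}{2}$. Both parts rest on
\[ -\log Z_N^F \;=\; \inf_{\mathbf{v}}\,\mathbb{E}\!\left[-R_N\bigl(Y_N+Z_N^{\mathbf{v}}\bigr)+\tfrac{1}{2}\!\int_0^1\|\mathbf{v}(t)\|_{L^2}^2\,dt\right], \]
with $Y_N=P_N W$ the regularized free field, $Z_N^{\mathbf{v}}=P_N I_N(\mathbf{v})$ the Cameron--Martin drift, and the multivariate Hermite identity $H_\beta(Y+Z;\sigma)=\sum_{\gamma\le\beta}\binom{\beta}{\gamma}H_\gamma(Y;\sigma)Z^{\beta-\gamma}$ splitting $R_N(Y_N+Z_N^{\mathbf{v}})$ into (a) the pure-drift piece $\int F(Z_N^{\mathbf{v}})\,dx$, (b) the pure stochastic piece $\int :F(Y_N):\,dx$, and (c) mixed cross-terms.

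For part (i), piece (a) is controlled by \eqref{nornon8}: $-\!\int F(Z_N^{\mathbf{v}})\,dx\ge \int\!\sum_{\beta\in A^-}|(Z_N^{\mathbf{v}})^\beta|^{q(\beta)}\,dx-m\|Z_N^{\mathbf{v}}\|_{L^2}^2-C$, and the quadratic is strictly dominated by the drift cost $\tfrac{1}{2}\|\mathbf{v}\|_{L^2}^2\ge \tfrac{1}{2}\|Z_N^{\mathbf{v}}\|_{H^\alpha}^2$ since $m<\tfrac{1}{2}$. Piece (b) has finite exponential moments by Proposition \ref{gono}. For piece (c), at $\alpha=\tfrac{d}{2}$ the Wick powers $H_\gamma(Y_N;\sigma_{\alpha,N})$ are uniformly bounded in $B^{-\epsilon}_{p,p}$ for every $\epsilon>0$ and $p<\infty$; pairing by Besov duality against $(Z_N^{\mathbf{v}})^{\beta-\gamma}$ and applying Young's inequality with the strictly superlinear exponent $q(\beta)>1$ absorbs the $Z_N^{\mathbf{v}}$-powers into the penalty from (a), up to a stochastic constant with finite exponential moment. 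This yields a uniform-in-$N$ lower bound on $-\log Z_N^F$, hence the required bound on $Z^F$ after passage to the limit.

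For part (ii), I would first exhibit the deterministic drift $\mathbf{v}^*$ with $I_N(\mathbf{v}^*)|_{t=1}=\boldsymbol{a}_{\boldsymbol{r}}(x)$ (the constant configuration equal to $\boldsymbol{a}_{\boldsymbol{r}}(x)$ on $\mathbb{T}^d$), whose Cameron--Martin cost equals $\tfrac{1}{2}|\boldsymbol{a}_{\boldsymbol{r}}(x)|^2$. The Hermite expansion together with $\mathbb{E}[H_\gamma(Y_N;\sigma_{\alpha,N})]=0$ for $\gamma\neq 0$ gives $\mathbb{E}[R_N(Y_N+\boldsymbol{a}_{\boldsymbol{r}}(x))]=F(\boldsymbol{a}_{\boldsymbol{r}}(x))$, producing the variational upper bound $-\log Z_N^F \le -F(\boldsymbol{a}_{\boldsymbol{r}}(x))+\tfrac{1}{2}|\boldsymbol{a}_{\boldsymbol{r}}(x)|^2$. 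To accommodate the weakened hypothesis \eqref{nornon9} (which only asks that $F+C\!\sum_{\beta\in A^-}|\boldsymbol{a}_{\boldsymbol{r}}^\beta|$ dominates $m|\boldsymbol{a}_{\boldsymbol{r}}|^2$ with $m>\tfrac{1}{2}$), I would refine $\mathbf{v}^*$ by a random perturbation adapted to $Y_N$ which produces an additional positive gain of order $\sum_{\beta\in A^-}|\boldsymbol{a}_{\boldsymbol{r}}(x)^\beta|$ in expectation while leaving the leading cost unchanged. Sending $x\to\infty$ along the sequence realizing \eqref{nornon9} then drives the variational upper bound for $-\log Z_N^F$ to $-\infty$, so $Z^F=\infty$ after the limit $N\to\infty$.

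The principal technical difficulty is closing the mixed-term estimate (c) in part (i): because $\sigma_{\alpha,N}\asymp\log N$, the Wick powers of $Y_N$ sit in a Besov space of arbitrarily small negative regularity but admit no positive regularity gain, so the Besov-duality/Young inequality must be balanced using only the marginal superlinear exponent $q>1$---this is precisely what distinguishes the borderline case $\alpha=\tfrac{d}{2}$ from the general setting of Theorem \ref{norA}. In part (ii), the subtle point is designing the random perturbation so that its Wick contribution to the expected gain exactly matches $\sum_{\beta\in A^-}|\boldsymbol{a}_{\boldsymbol{r}}(x)^\beta|$ while keeping the Cameron--Martin cost controlled at leading order by $\tfrac{1}{2}|\boldsymbol{a}_{\boldsymbol{r}}(x)|^2$.
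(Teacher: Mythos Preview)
Your treatment of part (i) is correct and matches the paper: it is precisely Theorem~\ref{norA} at $\alpha=\tfrac{d}{2}$, where $p^{\alpha,F,q}\equiv 1$, so the hypothesis $q>1$ coincides with $q>p^{\alpha,F,q}$.

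For part (ii) the architecture is right but there is a genuine gap at the decisive step. The constant drift $\boldsymbol{a}_{\boldsymbol{r}}(x)$ alone gives only $-\log Z_N^F\le -F(\boldsymbol{a}_{\boldsymbol{r}}(x))+\tfrac{1}{2}|\boldsymbol{a}_{\boldsymbol{r}}(x)|^2$, and you correctly see that \eqref{nornon9} is strictly weaker than what this bound needs. But your remedy---``a random perturbation adapted to $Y_N$'' producing extra gain $\sim\sum_{\beta\in A^-}|\boldsymbol{a}_{\boldsymbol{r}}(x)^\beta|$---is a target rather than a mechanism, and a random perturbation by itself gives no control over the \emph{sign} of that contribution. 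The paper's drift has \emph{three} pieces,
\[
\theta^M(t)=-\langle\nabla\rangle^{d/2}\boldsymbol{Z}_M^\eta(t)+\boldsymbol{a}_{\boldsymbol{r}}\bigl(M^{(d+\delta)/\kappa_1}\bigr)+\sigma_{\alpha,M}^{1/2}\,\boldsymbol{b},
\]
combining the random cancellation $-\mathbb{Y}_M(\eta)$ (cost $\sim M^d$, \emph{not} lower order but absorbed by the $M^{d+\delta}$ gain from the hypothesis) with an additional \emph{deterministic} shift $\sigma_{\alpha,M}^{1/2}\boldsymbol{b}$. After substitution the expected cross terms become $L(M)\,\widetilde p(\eta,\boldsymbol{b})$ for a fixed polynomial $\widetilde p$ and $L(M)\asymp(\log M)^{\kappa_3/2}M^{\kappa_2(d+\delta)/\kappa_1}$; at $\eta=1$ one has $\widetilde p(1,\boldsymbol{b})\simeq-\sum_{(\beta,\gamma)\in S} c_\beta\binom{\beta}{\gamma}H_\gamma(\boldsymbol{b};1)\,\boldsymbol{a}^{\beta-\gamma}$, a nonzero polynomial in $\boldsymbol{b}$ with vanishing Gaussian integral (Hermite orthogonality to constants), hence negative somewhere. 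Choosing such a $\boldsymbol{b}$ and then $\eta$ close to $1$ forces the cross-term contribution to be $\lesssim -(\log M)^h M^{\kappa_2(d+\delta)/\kappa_1}$, and it is exactly this logarithmic enhancement---specific to $\alpha=\tfrac{d}{2}$, since $\sigma_{\alpha,M}\asymp\log M$---that lets it strictly dominate the positive $\sum_{\beta\in A^-}|\boldsymbol{a}_M^\beta|$ term you must also carry. Neither the shift $\sigma_{\alpha,M}^{1/2}\boldsymbol{b}$ nor the Hermite-orthogonality sign argument appears in your outline; without them the proof does not close.
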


\begin{rem}
Theorem \ref{nornon} (i) is just the restatement of Theorem \ref{norA} in the case $\alpha = \frac{d}{2}$.
\end{rem}

\begin{rem}
As you can see, there is still a room for improvement in these results i.e. there is a gap between the sufficinet condition for the normalizability and the one for the non-normaliability. But still we can determine whether $\mu^F_\alpha$ can be realized as a probability measure for a large class of polynomial $F$ from our results, see Section \ref{exel}. Especially when $\alpha = \frac{d}{2}$, it turns out that the assumptions on $F$ in Theorem \ref{nornon} are actually necessary and sufficient condition for (non-)normalizability in specific cases such as the case where $F$ is a homogenous polynomial, see Section \ref{ex3}.
\end{rem}

\subsection{Main results part 2: Focusing Gibbs measures and the grand canonical form}
As explained in the beginning of the previous section, when $F$ is a $1$-variate polynomial 
\[ F(x) = \sum_{i=1}^k c_i x^i \] 
of degree $3$ or larger, \eqref{papa} can hold only in the defocusing case i.e. when $ k \ \mbox{is even and}\ c_k <0 .$
And in the focusing case i.e. when $ k \ \mbox{is odd or}\ c_k >0$, \eqref{papa} does not hold. However, it is proved in \cite{nons,heat} that $\Phi^3_2$-measure (or more generally, the log-correlated Gibbs measure with cubic interaction) becomes normalizable after introducing further modifications of the density function called (the Wick-ordered) $L^2$-cutoff or by considering the ``grand canonical form'' of the measure. See \cite{hartree, phi3, heat, nons} and references therein for more about the focusing Gibbs measures. Also in our setting, we can construct Gibbs measures with more varied types of interaction than in the previous section by allowing these modifications:   
\begin{itemize}
\item
(The Wick-ordered $L^2$-cutoff)
\begin{equation}\label{cut}
\nu_\alpha^F (d\Phi) = \frac{1}{Z} \1_{\left\{ \left|\int_{\mathbb{T}^d}:|\Phi|^2:(x)dx\right| \le K\right\}}e^{ \int_{\mathbb{T}^d} :F(\Phi):(x)dx} \mu_{\alpha}^{\otimes n }(d\Phi) 
\end{equation}
with $K>0$, where 
\begin{equation}
\int_{\mathbb{T}^d}:|\Phi|^2:(x)dx \coloneqq  \sum_{i=1}^n\int_{\mathbb{T}^d}:\Phi_i^2:(x)dx\ \ \ \ \ \ \ \mbox{for}\ \Phi = (\Phi_1,\cdots, \Phi_n)\in\mathcal{D}'(\mathbb{T}^d)^{\otimes n}.
\end{equation} 
\item
(The grand canonical form)
\begin{equation}\label{grand}
 \nu_\alpha^F (d\Phi) = \frac{1}{Z} e^{ \int_{\mathbb{T}^d} :F(\Phi):(x)dx -K \left|\int_{\mathbb{T}^d} :| \Phi |^2:(x)dx \right|^b} \mu_{\alpha}^{\otimes n}(d\Phi) 
\end{equation}
with $K, b>0$.
\end{itemize}
In the following, we only state the results relevant to the grand canonical form \eqref{grand} but the same kind of results are expected to hold in terms of the one with $L^2$-cutoff \eqref{cut} too, see \cite{heat} for the relation between two measures \eqref{cut} and \eqref{grand}. 

Now, we state the main results in a similar manner to the previous section. 

\begin{thm}\label{nornon1}
Let $\frac{k-1}{2k}d < \alpha < \frac{d}{2}$. 
\begin{enumerate}
\item 
If there exist some $q: A^- \rightarrow \mathbb{R}_+$ satisfying $q > \tilde p^{\alpha,F,q}$, $\kappa < \frac{2d}{d-\alpha}$ and $C>0$ such that
\begin{equation}\label{lalalag}
\sup_{\boldsymbol{x}\in \mathbb{R}^n} \left(  F (\boldsymbol{x}) + \sum_{\beta\in A^-}|\boldsymbol{x}^\beta|^{q(\beta)} - C|\boldsymbol{x}|^\kappa \right)<  \infty,   
\end{equation}
there holds
\[ \int e^{ \int_{\mathbb{T}^d} :F(\Phi):(x)dx -K \left|\int_{\mathbb{T}^d} :| \Phi |^2:(x)dx \right|^{b}} \mu_{\alpha}^{\otimes n}(d\Phi) < \infty  \]
for some $K>0$ and $0<b<\frac{d}{d-2\alpha}$, where $\tilde p^{\alpha,F,q}$ is defined through \eqref{ac1}, \eqref{ac2} and \eqref{ac3} similarly to $p^{\alpha, F, q} (\beta)$ in Theorem \ref{norA} but under the replacement of $\kappa_i(q)$ with 
\begin{equation}\label{kawadou2}
\tilde \kappa_i (q) \coloneqq \max \left\{ d\ ;\ d\in \mathbb{R}_+,\ d e_j \in \mbox{\textup{Conv}}\left( \bigcup_{\beta\in A^-}\{ q(\beta)\beta\} \cup \bigcup_{1\le i \le n} \{ \kappa e_i \}\cup\{ \boldsymbol{0}\}\right) \right\} .
\end{equation}
\item
If there exist some $q:A^- \rightarrow \mathbb{R}_+$, $\boldsymbol{a}=(a_1,\cdots , a_n)\in \mathbb{R}^n$ and $\boldsymbol{r}= (r_1, \cdots, r_n)\in \mathbb{N}^n$ satisfying the inequality \eqref{paka2} below such that 
\[\sup_{x\in\mathbb{R}_+}\left( F (\boldsymbol{a}_{\boldsymbol{r}}(x)) + \sum_{\beta \in  A^-} |\boldsymbol{a}_{\boldsymbol{r}}(x)^\beta|^{q(\beta)} -C |\boldsymbol{a}_{\boldsymbol{r}}(x)|^{\frac{2d}{d-\alpha}}  \right) = \infty   \]
for any $C>0$, there holds
\[ \int e^{ \int_{\mathbb{T}^d} :F(\Phi):(x)dx -K \left|\int_{\mathbb{T}^d} :| \Phi |^2:(x)dx \right|^b} \mu_{\alpha}^{\otimes n}(d\Phi) = \infty   \]
for any $K>0$ and $0<b\le \frac{d}{d-2\alpha}$, where $\boldsymbol{a}_{\boldsymbol{r}}(x) = (a_1 x^{r_1}, \cdots, a_n x^{r_n})$ and
\begin{equation} \label{paka2}
\min_{(\eta\in A^-,\ \boldsymbol{a}^\eta \neq 0)} \max_{(\beta\in A^-,\ \xi\in  A,\ \xi > \beta,\ \boldsymbol{a}^{\beta}\neq 0 )}  \left\{ \frac{|\xi - \beta|}{2}(d-2\alpha) + \frac{2(\beta\cdot\boldsymbol{r})}{q(\eta)(\eta\cdot\boldsymbol{r})}d -d \right\} > d.
\end{equation}
\end{enumerate}
\end{thm}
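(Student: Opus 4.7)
\textbf{Proof plan for Theorem \ref{nornon1}.} The plan is to follow the Bou\'e--Dupuis variational strategy of Theorems \ref{norA}--\ref{nornon}, with the grand canonical penalty replacing the quadratic mass $-m|\boldsymbol{x}|^2$ as the source of coercivity. Setting
\begin{equation*}
Z_N := \int \exp\!\Bigl(R_N^F(\Phi) - K\bigl|\textstyle\int_{\mathbb{T}^d}\!:|P_N^{\otimes n}\Phi|^2\!:(x)\,dx\bigr|^{b}\Bigr)\, \mu_\alpha^{\otimes n}(d\Phi),
\end{equation*}
the variational formula yields the representation
\begin{equation*}
\log Z_N \,=\, \sup_{u}\, \mathbb{E}\!\left[ R_N^F(Y_N + I(u)) \,-\, K\bigl|\textstyle\int\!:|Y_N + I(u)|^2\!:\,dx\bigr|^{b} \,-\, \tfrac{1}{2}\textstyle\int_0^1 \|u_t\|_{H^{\alpha}}^2\,dt \right],
\end{equation*}
where $Y_N$ is the $N$-truncated cylindrical enhancement of $\mu_\alpha^{\otimes n}$ and $I(u)$ is the associated drift. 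Part (i) asks for a uniform-in-$N$ upper bound of this supremum; part (ii) asks for divergence along a suitable test drift.

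\textit{Part (i).} Expanding each Wick monomial via the multivariate Hermite addition formula $H_\beta(Y_N + I(u);\sigma_{\alpha,N}) = \sum_{\gamma\le \beta}\binom{\beta}{\gamma} H_\gamma(Y_N;\sigma_{\alpha,N})\cdot I(u)^{\beta-\gamma}$, the integrand $R_N^F(Y_N + I(u))$ splits into (a) a pure stochastic piece $\int\!:F(Y_N)\!:$ whose expectation is controlled via Proposition \ref{gono}, (b) the deterministic piece $\int F(I(u))$, and (c) a finite family of mixed terms $\int :Y_N^\gamma:\,I(u)^{\beta-\gamma}$. Applying the hypothesis \eqref{lalalag} pointwise to (b) yields
\begin{equation*}
\textstyle\int F(I(u))\,dx \,\le\, C_1 \|I(u)\|_{L^\kappa}^\kappa \,-\, \sum_{\beta\in A^-} \|I(u)^\beta\|_{L^{q(\beta)}}^{q(\beta)} \,+\, C_2.
\end{equation*}
The new coercive input is that $K\bigl|\int\!:|Y_N+I(u)|^2\!:\bigr|^b \gtrsim \|I(u)\|_{L^2}^{2b}$, up to stochastic remainders bounded by standard moment estimates for $\int\!:|Y_N|^2\!:$ and the cross term $\int Y_N\cdot I(u)$. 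Combined with $\tfrac{1}{2}\int_0^1\|u_t\|_{H^\alpha}^2 dt \gtrsim \|I(u)\|_{H^\alpha}^2$, this delivers simultaneous coercive control of both $\|I(u)\|_{H^\alpha}^2$ and $\|I(u)\|_{L^2}^{2b}$. A Gagliardo--Nirenberg interpolation
\begin{equation*}
\|I(u)\|_{L^p} \;\lesssim\; \|I(u)\|_{H^\alpha}^{\theta}\,\|I(u)\|_{L^2}^{1-\theta}, \qquad \tfrac{1}{p} \,=\, \tfrac{1-\theta}{2} + \theta\bigl(\tfrac{1}{2} - \tfrac{\alpha}{d}\bigr),
\end{equation*}
then tames $\|I(u)\|_{L^\kappa}^\kappa$ precisely when $\kappa < \frac{2d}{d-\alpha}$, and more generally yields control of $\|I(u)_i\|_{L^p}^p$ for every $p \le \tilde\kappa_i(q)$. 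The modification $\kappa_i(q)\rightsquigarrow\tilde\kappa_i(q)$ in \eqref{kawadou2} precisely records that the convex hull of admissible absorption directions is enlarged by the extra vertices $\kappa e_i$. With this enlarged set of coercive controls, the cross terms in (c) are absorbed via Besov duality and Young's inequality exactly as in the proof of Theorem \ref{norA}, under the hypothesis $q > \tilde p^{\alpha, F, q}$, giving an $N$-uniform upper bound.

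\textit{Part (ii).} The strategy is to lower-bound the variational supremum by plugging in a concrete drift $u_\lambda$ whose image $I(u_\lambda)(x)$ approximates $\boldsymbol{a}_{\boldsymbol{r}}(\lambda)\,\chi(\lambda(x-x_0))$ on a ball of radius $\lambda^{-1}$ about a fixed $x_0\in\mathbb{T}^d$ for a smooth bump $\chi$. Direct scaling analysis provides explicit $\lambda$-exponents: the polynomial contribution is $\asymp \lambda^{-d} F(\boldsymbol{a}_{\boldsymbol{r}}(\lambda))$, the correction terms $\|I(u_\lambda)^\beta\|_{L^{q(\beta)}}^{q(\beta)}$ are $\asymp \lambda^{-d}|\boldsymbol{a}_{\boldsymbol{r}}(\lambda)^\beta|^{q(\beta)}$, the Hamiltonian cost is $\asymp \lambda^{2\alpha-d}|\boldsymbol{a}_{\boldsymbol{r}}(\lambda)|^2$, and the grand canonical penalty is $\asymp \lambda^{-db}|\boldsymbol{a}_{\boldsymbol{r}}(\lambda)|^{2b}$. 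The min-max inequality \eqref{paka2}, together with the choice $b \le \frac{d}{d-2\alpha}$, is exactly the statement that one can select indices $\eta,\beta,\xi$ and parameters $\boldsymbol{r}$ making the polynomial piece dominate every coercive ingredient simultaneously as $\lambda\to\infty$, forcing $\log Z_N\to\infty$. Stochastic cross terms involving $Y_N$ are subleading by the probabilistic moment bounds of Proposition \ref{gono}.

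\textit{Main obstacle.} The harder half is part (i): the interpolation bookkeeping must be sharp enough that every cross term $\int :Y_N^\gamma:\,I(u)^{\beta-\gamma}$ is absorbed by the combination $\|I(u)\|_{H^\alpha}^2 + \|I(u)\|_{L^2}^{2b} + \sum_{\beta}\|I(u)^\beta\|_{L^{q(\beta)}}^{q(\beta)}$ at the threshold $q > \tilde p^{\alpha, F, q}$ and not at a strictly larger exponent. Concretely, for each $\beta\in A^-$ one must pick an index $i$ and a splitting $(\epsilon,\tilde\epsilon)\in R^i(\beta)$ balancing the Besov regularity of $:Y_N^\gamma\!:$ against the anisotropic Sobolev scales of $I(u)$ now widened by $\tilde\kappa_i(q)$; the feasibility of all such choices simultaneously is exactly the content of the min-max structure \eqref{ac1}--\eqref{ac3}. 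A secondary difficulty is the treatment of the stochastic remainder hidden inside $K|\!\int\!:|Y_N+I(u)|^2\!:\!|^b$, which because of the nonlinearity $|\cdot|^b$ forces a careful splitting of $:|Y_N+I(u)|^2\!:$ into a deterministic $\|I(u)\|_{L^2}^2$ piece and a Wick-type correction with subcritical scaling.
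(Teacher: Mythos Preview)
Your plan for part~(i) is correct and essentially identical to the paper's argument: the paper proves Lemma~\ref{gogo} to extract $c\|I_N(\theta)\|_{L^2}^{2b}$ from the penalty term, Lemma~\ref{mizu} to absorb $\|I_N(\theta)\|_{L^\kappa}^\kappa$ by Gagliardo--Nirenberg interpolation against $\|I_N(\theta)\|_{H^\alpha}^2 + \|I_N(\theta)\|_{L^2}^{2b}$ precisely under $\kappa<\tfrac{2d}{d-\alpha}$, and Lemma~\ref{atoato} (the analogue of Lemma~\ref{tuitui} with $\tilde\kappa_i(q)$) to close the cross-term estimates.

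Your plan for part~(ii), however, has a genuine gap. The purely deterministic bump drift $I(u_\lambda)\approx\boldsymbol{a}_{\boldsymbol{r}}(\lambda)\chi(\lambda(x-x_0))$ makes all cross terms $\mathbb{E}\bigl[\int:\mathbb{Y}^\gamma:\,I(u_\lambda)^{\beta-\gamma}\bigr]$ vanish for $\gamma\neq 0$, so the variational functional reduces to purely deterministic competition. Writing $\kappa_1$ for the growth exponent of $F(\boldsymbol{a}_{\boldsymbol{r}}(x))+\sum_\beta|\boldsymbol{a}_{\boldsymbol{r}}(x)^\beta|^{q(\beta)}$ and $\kappa_2=\max_{\beta\in A^-}q(\beta)(\beta\cdot\boldsymbol{r})$, your scheme works only when $\kappa_1>\kappa_2$; in the critical case $\kappa_1=\kappa_2$ the gain from $-\int F$, the loss from the auxiliary $\sum_\beta\|I^\beta\|_{L^{q(\beta)}}^{q(\beta)}$, the Hamiltonian cost, and the penalty all balance at the same order, and nothing diverges. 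Your reading of \eqref{paka2} as ``the polynomial piece dominates every coercive ingredient'' is therefore incorrect: the quantity $\tfrac{|\xi-\beta|}{2}(d-2\alpha)$ in \eqref{paka2} is the scaling exponent of the Wick variance $\sigma_{\alpha,M}^{|\xi-\beta|/2}$, and the whole condition is designed so that the \emph{stochastic cross terms}, not the polynomial piece, have growth exponent $\kappa_3>d$.

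The paper's drift is instead $\theta^M(t)=-\langle\nabla\rangle^{\alpha}\boldsymbol{Z}_M^\eta(t)+\langle\nabla\rangle^{\alpha}\boldsymbol{a}_{\boldsymbol{r}}(h_M)+\sigma_{\alpha,M}^{1/2}\boldsymbol{b}$, where the random piece $-\boldsymbol{Z}_M^\eta$ partially cancels the Gaussian and the shift $\sigma_{\alpha,M}^{1/2}\boldsymbol{b}$ is a free parameter. This makes the cross terms become $\mathbb{E}\bigl[\int H_\gamma(\mathbb{Y}_M-\mathbb{Y}_M(\eta)+\sigma_{\alpha,M}^{1/2}\boldsymbol{b};\sigma_{\alpha,M})(\boldsymbol{a}_M^h)^{\beta-\gamma}\bigr]$, which are genuinely nonzero and can be tuned (via $\eta$ close to $1$ and a suitable $\boldsymbol{b}$, using the sign-change argument based on orthogonality of Hermite polynomials) to contribute $-M^{\kappa_3}$. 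Condition \eqref{paka2} is then exactly $\kappa_3>d$, which in the case $\kappa_1=\kappa_2$ beats the $O(M^d)$ balance of all other terms. You should incorporate this random component into your test drift and reinterpret \eqref{paka2} accordingly.
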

\begin{rem}
From Theorem \ref{nornon1} (ii), we can only say about the non-normalizability for $0<b\le \frac{d}{d-2\alpha}$. The author expects that the statement can be extended to the one with $0<b<\infty$, but has not been able to do it because of a technical difficulty in the proof. 
\end{rem}
We excluded the case $\alpha = \frac{d}{2}$ in Theorem \ref{nornon1} because there holds a slightly stronger assertion in that case similarly to the previous section. 

\begin{thm}\label{nono}
Let $\alpha = \frac{d}{2}$.
\begin{enumerate}
\item 
If there exist some $q: A^- \rightarrow \mathbb{R}_+$ with $q > 1$, $\kappa < 4$ and $C>0$ such that
\[ \sup_{\boldsymbol{x}\in \mathbb{R}^n} \left(  F  (\boldsymbol{x}) + \sum_{\beta\in A^-}|\boldsymbol{x}^\beta|^{q(\beta)} -C|\boldsymbol{x}|^\kappa \right)<  \infty \]
%for any $f \in Q_{\tilde A^-}(q)$, 
there holds
\[ \int e^{ \int_{\mathbb{T}^d} :F(\Phi):(x)dx -K \left|\int_{\mathbb{T}^d} :| \Phi |^2:(x)dx \right|^{b}} \mu_{\alpha}^{\otimes n}(d\Phi) < \infty  \]
for some $K>0$ and $b>0$.
\item
If there exist some $\boldsymbol{a}=(a_1,\cdots , a_n)\in \mathbb{R}^n$, $\boldsymbol{r}= (r_1, \cdots, r_n)\in \mathbb{N}^n$ and $C_1>0$ such that 
\[ \sup_{x\in\mathbb{R}_+}\left( F (\boldsymbol{a}_{\boldsymbol{r}}(x)) + C_1\sum_{\beta\in A^-}|\boldsymbol{a}_{\boldsymbol{r}}(x)^\beta| -C_2 |\boldsymbol{a}_{\boldsymbol{r}}(x)|^{\kappa} \right) = \infty\]
for any $\kappa <4$ and $C_2>0$, there holds
\[ \int e^{ \int_{\mathbb{T}^d} :F(\Phi):(x)dx -K \left|\int_{\mathbb{T}^d} :| \Phi |^2:(x)dx \right|^b} \mu_{\alpha}^{\otimes n}(d\Phi) = \infty   \]
for any $K>0$ and $b>0$. %where $\boldsymbol{1}: \tilde A^- \rightarrow \mathbb{R}$ is a constant function $\boldsymbol{1} \equiv 1$.
\end{enumerate}
\end{thm}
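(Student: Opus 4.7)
The proof adapts the variational method from Theorem \ref{nornon1} to the log-correlated endpoint $\alpha = \tfrac{d}{2}$. At this endpoint, $\sigma_{\alpha,N}$ diverges only logarithmically in $N$ and Wick powers $:Y_N^\beta:$ of the base field converge to Gaussian multiplicative chaos measures with finite positive moments of all orders. These two features explain the strengthenings: the bound $\kappa < \tfrac{2d}{d-\alpha}$ becomes $\kappa < 4$ in (i), and the restriction $b \le \tfrac{d}{d-2\alpha}$ disappears in (ii).

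\textbf{Part (i).} Setting $U_N(\Phi) \coloneqq R_N^F(\Phi) - K\left|\int_{\mathbb{T}^d}:|P_N^{\otimes n}\Phi|^2:dx\right|^{b}$, the Bou\'e--Dupuis formula gives
\[
-\log \int e^{U_N} d\mu_\alpha^{\otimes n} = \inf_\theta \mathbb{E}\!\left[-R_N^F(Y_N + Z_N(\theta)) + K\!\left|\int :|Y_N+Z_N(\theta)|^2:\right|^{b} + \tfrac{1}{2}\int_0^1 \|\theta(t)\|_{L^2}^2 dt\right],
\]
with $Y_N$ the truncated base Gaussian and $Z_N(\theta)$ the drift part. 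Expanding $:F(Y_N+Z_N):$ binomially into Wick products of $Y_N$ against monomials of $Z_N$, the hypothesis yields the lower bound
\[
-\int :F(Y_N+Z_N):dx \;\geq\; -C_* + \sum_{\beta \in A^-}\int |Z_N^\beta|^{q(\beta)} dx - C\int |Z_N|^\kappa dx + \mathcal{R}_N,
\]
where $\mathcal{R}_N$ collects cross-terms involving $:Y_N^\gamma:$ for $\gamma > 0$. Since $q > 1$ strictly, an $\varepsilon$-fraction of $\sum\int|Z_N^\beta|^{q(\beta)}$ absorbs $\mathcal{R}_N$ via H\"older, using that the log-correlated Wick powers possess arbitrary positive moments. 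Because $\kappa < 4$, Gagliardo--Nirenberg interpolation controls $C\int|Z_N|^\kappa dx$ by a fraction of $\int_0^1 \|\theta\|_{L^2}^2 dt$ plus $\tfrac{K}{2}|\int:|Y_N+Z_N|^2:|^{b}$ for $b$ sufficiently large depending on $\kappa$. The resulting uniform-in-$N$ lower bound on the variational expression yields finiteness of the partition function.

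\textbf{Part (ii).} Take a deterministic drift $\theta^*_N$ realizing $Z_N(\theta^*_N) = \boldsymbol{a}_{\boldsymbol{r}}(x_N)\chi_{R_N}$, where $\chi_{R_N}$ is a smooth bump supported in a ball of radius $R_N$. The reverse direction of Bou\'e--Dupuis gives
\[
\log \int e^{U_N} d\mu_\alpha^{\otimes n} \geq \mathbb{E}[R_N^F(Y_N + Z_N(\theta^*_N))] - K\,\mathbb{E}\!\left[\left|\int :|Y_N+Z_N(\theta^*_N)|^2:\right|^{b}\right] - \tfrac{1}{2}\|\theta^*_N\|_{L^2_t L^2_x}^2.
\]
Since $\mathbb{E}[:Y_N^\gamma:] = 0$ for $\gamma > 0$, the first expectation equals $\int F(\boldsymbol{a}_{\boldsymbol{r}}(x_N)\chi_{R_N}(x))dx \asymp F(\boldsymbol{a}_{\boldsymbol{r}}(x_N))R_N^d$. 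The penalty expectation is $\lesssim (R_N^d|\boldsymbol{a}_{\boldsymbol{r}}|^2)^{b} + (\text{Wick variance})$, and the drift cost satisfies $\|\theta^*_N\|_{L^2_tL^2_x}^2 \lesssim \|\boldsymbol{a}_{\boldsymbol{r}}(x_N)\chi_{R_N}\|_{\dot H^{d/2}}^2 \lesssim |\boldsymbol{a}_{\boldsymbol{r}}(x_N)|^2$ by scale-invariance of $\dot H^{d/2}$. Picking $R_N^d = |\boldsymbol{a}_{\boldsymbol{r}}(x_N)|^{-2+\delta}$ for $\delta > 0$ small depending on $b$, the penalty becomes $O(|\boldsymbol{a}_{\boldsymbol{r}}|^{b\delta})$, and divergence reduces to $F(\boldsymbol{a}_{\boldsymbol{r}}(x_N)) \gg |\boldsymbol{a}_{\boldsymbol{r}}(x_N)|^{4-\eta}$ along some subsequence $x_N \to \infty$, which the hypothesis (unboundedness modulo $C_2|\boldsymbol{a}_{\boldsymbol{r}}|^\kappa$ for every $\kappa < 4$ and $C_2 > 0$) supplies. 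The lower-order Wick remainders are absorbed by the terms $C_1\sum|\boldsymbol{a}_{\boldsymbol{r}}^\beta|$ in the hypothesis.

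\textbf{Main obstacle.} The principal technicality is controlling the Wick variance in part (ii): moments such as $\mathbb{E}\big[\big(\int:Y_N\cdot Z_N(\theta^*_N):dx\big)^{2}\big]$ pick up factors logarithmic in $R_N^{-1}$ from the logarithmic covariance of $Y_N$, and one must verify they remain subleading with respect to $F(\boldsymbol{a}_{\boldsymbol{r}}(x_N))R_N^d$. It is the scale-invariance of the $\dot H^{d/2}$ norm (keeping the drift cost $O(|\boldsymbol{a}_{\boldsymbol{r}}|^2)$ uniformly in $R_N$) together with the merely logarithmic Wick renormalization constants that permit (ii) to hold for every $b > 0$, in contrast to the threshold $b \le \tfrac{d}{d-2\alpha}$ of Theorem \ref{nornon1}(ii).
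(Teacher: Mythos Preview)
Your sketch for Part (i) is essentially the paper's argument: Bou\'e--Dupuis, binomial expansion, absorb cross terms using $q>1$ (this is the estimate \eqref{simeg}, which simplifies at $\alpha=\tfrac{d}{2}$ because $:Y_N^\gamma:$ has regularity $-\epsilon$), and control $\int|Z|^\kappa$ via Lemmas \ref{gogo} and \ref{mizu}. The remark about Gaussian multiplicative chaos is incorrect terminology, but the substance is right.

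Part (ii), however, has a genuine gap. With a purely deterministic drift $Z=\boldsymbol{a}_{\boldsymbol{r}}(x_N)\chi_{R_N}$ the cross terms $\int :Y_N^\gamma:\,Z^{\beta-\gamma}dx$ have zero expectation, so the only large contribution you can produce is $\mathbb{E}[R_N^F(Y_N+Z)]=\int F(Z)\,dx$. But the hypothesis does \emph{not} say $F(\boldsymbol{a}_{\boldsymbol{r}}(x))$ is large; it says $F(\boldsymbol{a}_{\boldsymbol{r}}(x))+C_1\sum_{\beta\in A^-}|\boldsymbol{a}_{\boldsymbol{r}}(x)^\beta|$ is large. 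In the examples of Section \ref{ex2} (monomials $a\boldsymbol{x}^\beta$ with $a<0$, all $\beta_i$ even, $|\beta|\ge 6$) one has $F\le 0$ identically along the chosen ray, so $\int F(Z)\,dx\le 0$ and your lower bound gives nothing. The sum $C_1\sum|\boldsymbol{a}_{\boldsymbol{r}}^\beta|$ is not a remainder to be absorbed; it is the driver of divergence, and your construction never generates it.

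The paper's drift has three parts,
\[
\theta^M(t)=-\langle\nabla\rangle^{d/2}\boldsymbol{Z}_M^\eta(t)+\langle\nabla\rangle^{d/2}\boldsymbol{a}_{\boldsymbol{r}}\big((\log M)\,g_M\big)+\sigma_{\alpha,M}^{1/2}\boldsymbol{b},
\]
with $g_M$ a Fourier-localized profile from Lemma \ref{haki}. The random piece $-\mathbb{Y}_M(\eta)$ and the constant shift $\sigma_{\alpha,M}^{1/2}\boldsymbol{b}$ are precisely what make the expected cross terms $A_M^3$ in \eqref{kurai} nonzero; by choosing $\eta,\boldsymbol{b}$ one arranges (see \eqref{owa5}) $A_M^3\asymp -(\log M)^{h+\kappa_2}\int|g_M|^{\kappa_2}$, which dominates the positive correction $A_M^2$ and effectively manufactures the $C_1\sum|\boldsymbol{a}_{\boldsymbol{r}}^\beta|$ term. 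Without this mechanism the argument fails on the examples above. A secondary issue: your asymptotic $\int F(\boldsymbol{a}_{\boldsymbol{r}}(x_N)\chi_{R_N})\,dx\asymp F(\boldsymbol{a}_{\boldsymbol{r}}(x_N))R_N^d$ is not correct when $F$ has several monomials of different degrees, since $\int\chi_{R_N}^{|\beta|}dx=R_N^d\|\chi\|_{L^{|\beta|}}^{|\beta|}$ carries a $\beta$-dependent constant that can spoil cancellations.
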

\begin{rem}
Like the results in the previous section, there is a gap between the assumptions of Theorem \ref{nornon1} (i) and (ii) i.e. there are examples of polynomial $F$ not satisfying both of these assumption. We will make a few observations on such examples, see Section \ref{ex4}.
\end{rem}

\subsection{The organization of the paper} In Section \ref{exel}, we examine some examples as an application of our main theorems. In Section \ref{katana}, we briefly recall the basics of Sobolev spaces and  Gaussian random variables, and the Bou\'e-Dupuis formula, which is the key to our method. In Sections \ref{norma1}, \ref{norma2}, \ref{norma3} and \ref{norma4}, we prove the main results: In Section \ref{norma1}, we prove Theorem \ref{norA}. In Section \ref{norma2}, we prove Theorems \ref{nornon} (ii) and \ref{nonA}. In Section \ref{norma3}, we prove Theorem \ref{nornon1} (i). In Section \ref{norma4}, we prove Theorems \ref{nono} (ii) and \ref{nornon1} (ii). Section \ref{criti} is devoted to the proof of Theorem \ref{corocoro}, which is introduced in Section \ref{exel} and states the existence of the example that shows critical behavior. As an appendix, we give the proof of the convergence of Wick powers in Section \ref{appen}.

\subsection{Notations}
\begin{itemize}
\item
$\mathbb{N} \coloneqq \{ 0,1,2,\cdots \},\ \mathbb{R}_+ \coloneqq \{ x\in \mathbb{R}\ ;\ x\geq 0 \}$.
\item
For $\boldsymbol{x}=(x_1,\cdots,x_n),\ \boldsymbol{y}=(y_1,\cdots,y_n)\in\mathbb{R}^n$,
\[ \boldsymbol{x}\leq \boldsymbol{y} \iff x_i \leq y_i \ \mbox{for any}\ 1 \leq i \leq n \]
and 
\[ \boldsymbol{x} < \boldsymbol{y} \iff \boldsymbol{x} \neq \boldsymbol{y} \ \mbox{and}\   x_i \leq y_i \ \mbox{for any}\ 1 \leq i \leq n.\]
\item 
$\mathcal{D}'(\mathbb{T}^d)$ denotes the topological dual of $C^\infty(\mathbb{T}^d)$.
\item
We write $a\lesssim b$ if there holds $a\le C b$ for some constant $C>0$ independent of the variables under consideration. We also write $a \simeq b$ if $a \lesssim b$ and $b \lesssim a$.
\item
For two sequences $a_N$ and $b_N$, we write
\begin{align}
a_N \asymp b_N \iff 0< \liminf_{N\rightarrow \infty} \frac{a_N}{b_N} \le \limsup_{N\rightarrow \infty} \frac{a_N}{b_N} < \infty.
\end{align}
\item
For $S \subset \mathbb{R}^n$,
\begin{align} 
&\mbox{Conv}(S) \notag \\
&\coloneqq \left\{ \boldsymbol{x} \in \mathbb{R}^n\ ;\ \boldsymbol{x} = \sum_{i=1}^{M}z_i \boldsymbol{y}_i \ \mbox{for some}\ \boldsymbol{y}_1, \cdots, \boldsymbol{y}_M \in S \ \mbox{and}\ z_1, \cdots, z_M \in \mathbb{R}_+ \ \mbox{with}\  \sum_{i=1}^{M}z_i = 1. \right\} \label{conv} 
\end{align}
is a convex hull of $S$.
\end{itemize}

\noindent \textbf{Acknowledgement.}
The author would like to thank Professor Sergio Albeverio and Professor Seiichiro Kusuoka for introducing me to the problem that inspired this study and for helpful comments on the manuscript.

\section{Examples}\label{exel}
In this section, we consider some specific examples and see what can be said for them as an application of our main results.

\subsection{$1$-variate polynomials}\label{ex1}
Consider the case that $\alpha = \frac{d}{2}$, $n=1$ and 
\[ F(x) = \sum_{i=1}^k c_i x^i \] 
with $c_k \neq 0$ and $k\ge 3$.
In this case, we can recover the known results from Theorem \ref{nornon}:
\begin{align}
&\int e^{ \int_{\mathbb{T}^d} :F(\Phi):(x)dx} \mu_{\alpha}(d\Phi) < \infty  \label{lay} \\
& \iff \sup_{x\in\mathbb{R}} F(x) < +\infty \iff c_k < 0 \ \mbox{and}\ k\ \mbox{is even}. \notag
\end{align}
Indeed, if $c_k < 0$ and $k$ is even, taking $q$ such that $1< q < \frac{k}{k-1}$,
\[ \sup_{x\in \mathbb{R}} \left(F(x) + \sum_{i=1}^{k-1}|x^i|^q - m |x|^2    \right) < \infty \]
and this means that the condition \eqref{nornon8} holds. On the other hand, if $c_k >0$ or $k\ge 3$ is odd,
\[ \sup_{x\in \mathbb{R}} \left( F(x) - \frac{1}{2}|x|^2   \right) = \infty   \]
and this especially means that the condition \eqref{nornon9} holds in this case.
\begin{rem}\label{2jou}
When $k=2$, we can see that \eqref{lay} holds if $c_k <\frac{1}{2}$ and does not hold if $c_k >\frac{1}{2}$. The author does not know what happens in the case $c_k = \frac{1}{2}$ and $k=2$.
\end{rem}
Moreover, we can see from Theorem \ref{nono} that 
\[ \int e^{ \int_{\mathbb{T}^d} :F(\Phi):(x)dx -K \left|\int_{\mathbb{T}^d} :| \Phi |^2:(x)dx \right|^{b}} \mu_{\alpha}(d\Phi) < \infty  \]
for some $K>0$ and $b>0$ if $k=3$, and 
\[ \int e^{ \int_{\mathbb{T}^d} :F(\Phi):(x)dx -K \left|\int_{\mathbb{T}^d} :| \Phi |^2:(x)dx \right|^{b}} \mu_{\alpha}(d\Phi) = \infty  \]
for any $K>0$ and $b>0$ if ``$c_k>0$ or $k$ is odd'' and $k\ge 4$. 
Indeed, if $k=3$,
\[ \sup_{x\in \mathbb{R}} \left( F(x) - |x|^{\frac{7}{2}}   \right) < \infty   \]
and in the latter case,
\[ \sup_{x\in \mathbb{R}} \left( F(x) - C|x|^\kappa   \right) = \infty   \]
for any $\kappa <4$ and $C>0$. Therefore, we can apply Theorem \ref{nono} in these cases and we get the above result.
Note that the normalizability of $\Phi^3_2$ measure and the non-normalizability of the focusing $\Phi^k_2$ measure with $k\ge 4$ are already known, see \cite{heat} and references therein.

%\begin{exa}{\textup{(Fractional $\Phi^{2k}$ model)}}

%\end{exa}

\subsection{Monomials}\label{ex2}
Consider the case that $\alpha = \frac{d}{2}$ and 
\[ F(\boldsymbol{x}) = a\boldsymbol{x}^\beta,\ \ \ \ \boldsymbol{x}\in\mathbb{R}^n   \]
for some $a\in\mathbb{R}\backslash \{ 0\}$ and $\beta\in\mathbb{N}^n$ with $|\beta|= \beta_1 + \cdots + \beta_n \ge 3$, $\beta_1 \neq 0$ and $\beta_2 \neq 0$. Note that the case where $\beta_1 \neq 0$ and $\beta_i =0$ for all $i \in \{ 2, \cdots, n\}$ is already treated in Example \ref{ex1}. In this case, we can get the following result.% so we only consider the case where $\beta_1 \neq 0$ and $\beta_2 \neq 0$. 
\begin{thm}
%Let $\alpha = \frac{d}{2}$ and $F(\boldsymbol{x})= a\boldsymbol{x}^\beta$ with some $a\in\mathbb{R}\backslash \{ 0\}$ and $\beta\in\mathbb{N}^n$ with $|\beta|= \beta_1 + \cdots + \beta_n \ge 3$. We also assume that
%$\beta_1, \beta_2 >0$. Then, 
In the above setting, the following statements hold.
\begin{enumerate}
\item
There holds
\begin{align*}
\int e^{a \int_{\mathbb{T}^d} :\Phi^\beta:(x)dx} \mu_{\alpha}^{\otimes n}(d\Phi) = \infty .
\end{align*}
\item
There holds
\begin{align*}
\int e^{a \int_{\mathbb{T}^d} :\Phi^\beta:(x)dx-K \left|\int_{\mathbb{T}^d} :| \Phi |^2:(x)dx \right|^{b}} \mu_{\alpha}^{\otimes n}(d\Phi) < \infty 
\end{align*}
for some $K,b>0$ if and only if $|\beta|=3$ or ``$a<0$ and $\beta=(2,2,0,\cdots,0)=2e_1+2e_2$''.
\end{enumerate}
\end{thm}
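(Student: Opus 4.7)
The plan is to reduce everything to the input hypotheses of Theorems \ref{nornon} and \ref{nono}. Throughout, let $S_\beta := \{i : \beta_i \neq 0\} \supseteq \{1,2\}$. The key dichotomy is whether one can choose $\boldsymbol{a}\in\mathbb{R}^n$ with $a_i\neq 0$ for all $i\in S_\beta$ and with $a\,\boldsymbol{a}^\beta>0$; by inspection of signs this is possible iff $a>0$ or some $\beta_i$ is odd, and it fails iff $a<0$ and every $\beta_i$ is even.

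For (i), I verify the hypothesis of Theorem \ref{nornon} (ii). In the sign-achievable case, choose $\boldsymbol{a}$ as above and set $r_i=1$ for $i\in S_\beta$ and $r_i=0$ otherwise. Then $F(\boldsymbol{a}_{\boldsymbol{r}}(x))=(a\boldsymbol{a}^\beta)\,x^{|\beta|}$ has positive leading coefficient and $|\boldsymbol{a}_{\boldsymbol{r}}(x)|^2\asymp x^2$, so the required expression diverges because $|\beta|\ge 3>2$. In the complementary case ($a<0$, all $\beta_i$ even), I take $\boldsymbol{a}=\boldsymbol{r}=e_1$: since $\beta_2>0$ while $a_2=0$, the term $F(\boldsymbol{a}_{\boldsymbol{r}}(x))$ vanishes identically, but $\beta_1 e_1\in A^-$ (using $\beta_2>0$) contributes $x^{\beta_1}$ with $\beta_1\ge 2$ to the $C$-sum, and choosing $C$ sufficiently large relative to $m>1/2$ yields divergence.

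For the ``if'' direction of (ii), I invoke Theorem \ref{nono} (i). When $|\beta|=3$, every $\eta\in A^-$ satisfies $|\eta|\le 2$; taking constant $q\in(1,4/3)$ and $\kappa\in(3,4)$ with $\kappa>2q$, the penalty $-C|\boldsymbol{x}|^\kappa$ dominates both $F(\boldsymbol{x})\le |a||\boldsymbol{x}|^3$ and $|\boldsymbol{x}^\eta|^{q}\le|\boldsymbol{x}|^{2q}$ uniformly. When $a<0$ and $\beta=2e_1+2e_2$, $F\le 0$ pointwise, every $\eta\in A^-$ has $|\eta|\le 3$, and choosing $q$ slightly above $1$ together with $\kappa\in(3q,4)$ again bounds the relevant supremum; interestingly, no use of the negative sign of $F$ is needed here, only that $F\le 0$.

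For the ``only if'' direction of (ii), I verify Theorem \ref{nono} (ii) by the same dichotomy. In the sign-achievable subcase, the Part (i) ansatz gives $F(\boldsymbol{a}_{\boldsymbol{r}}(x))\asymp x^{|\beta|}$ with $|\beta|\ge 4$, which beats $C_2|\boldsymbol{a}_{\boldsymbol{r}}(x)|^\kappa\asymp x^\kappa$ for every $\kappa<4$ and every $C_2>0$, so any choice of $C_1>0$ works. The remaining subcase ($a<0$, all $\beta_i$ even, $\beta\neq 2e_1+2e_2$, $|\beta|\ge 4$) is the combinatorially delicate step and the main obstacle: I need a subset $S\subsetneq S_\beta$ with $\sum_{i\in S}\beta_i\ge 4$. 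If $S_\beta=\{1,2\}$, then the exclusion $\beta\neq(2,2,0,\ldots,0)$ combined with the parity constraint forces some $\beta_i\ge 4$, so $S=\{i\}$ works; if $|S_\beta|\ge 3$, then $S=\{1,2\}$ works since $\beta_1+\beta_2\ge 4$. Setting $a_i=r_i=1$ for $i\in S$ and $a_i=r_i=0$ otherwise kills $F$ (because some $\beta_j>0$ has $a_j=0$) while producing $\eta:=\sum_{i\in S}\beta_i e_i\in A^-$ whose contribution to the $C_1$-sum is exactly $x^{\sum_{i\in S}\beta_i}\ge x^4$, dominating $x^\kappa$ for every $\kappa<4$. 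Once $S$ is identified, the rest is routine bookkeeping against the cited theorems.
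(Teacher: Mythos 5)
Your proof is correct and follows essentially the same route as the paper: both parts reduce to verifying the hypotheses of Theorems \ref{nornon} and \ref{nono} along rays $\boldsymbol{a}_{\boldsymbol{r}}(x)$, with the monomial either driven positive (possible exactly when $a>0$ or some $\beta_i$ is odd) or killed by zeroing a coordinate so that a surviving element of $A^-$ of degree at least $4$ (resp.\ at least $2$ with $C$ large) forces divergence. Your organization by sign-achievability and your choice of the proper subset $S$ with $\sum_{i\in S}\beta_i\ge 4$ are cosmetically different from, but equivalent to, the paper's case split on $\beta_1\ge 2$ versus $\beta_i\le 1$ for all $i$ and its choice of the minimal index $i_0$.
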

\begin{proof}
Because we are assuming $\beta_2 >0$, we have $(x,0,x,\cdots, x)^\beta = x^{\beta_1}0^{\beta_2}\cdots x^{\beta_n} = 0$ for any $x \in \mathbb{R}$ and $(\beta_1,0, \cdots,0) = \beta_1 e_1 < \beta_1 e_1 + \beta_2 e_2 \le \beta$. Thus, we can see that if $\beta_1 \ge 2$ and $C>n-1$,
\begin{align*}
&\sup_{\boldsymbol{x}\in\mathbb{R}^n, \boldsymbol{x} = (x, 0, x, \cdots, x)= \sum_{i\neq 2}xe_i} \left( a\boldsymbol{x}^\beta + C\sum_{\gamma < \beta} |\boldsymbol{x}^\gamma| -|\boldsymbol{x}|^2 \right)\\
&\ge \sup_{\boldsymbol{x}\in\mathbb{R}^n, \boldsymbol{x} = (x, 0, x, \cdots, x)} \left( a\boldsymbol{x}^\beta + C|\boldsymbol{x}^{\beta_1 e_1}| -|\boldsymbol{x}|^2  \right) = \sup_{x\in\mathbb{R}} \left\{ C|x^{\beta_1}|-(n-1)|x|^2\right\} = \infty .
\end{align*} 
Now, we recall that $\boldsymbol{a}_{\boldsymbol{r}}(x)$ is defined in Theorem \ref{nonA}. When $\beta_i \le 1$ for any $1\le i \le n$, by setting $\boldsymbol{r}_1 \coloneqq (1, \cdots, 1) = \sum_{1\le i \le n}e_i$ and noting that we are assuming $|\beta|\ge 3$,
\begin{align*}
&\sup_{x\in\mathbb{R}} \left( a \boldsymbol{a}_{\boldsymbol{r}_1} (x)^\beta - |\boldsymbol{a}_{\boldsymbol{r}_1} (x)|^2  \right) = \sup_{x\in\mathbb{R}} \left( a \boldsymbol{a}^\beta x^{|\beta|}     - |\boldsymbol{a}_{\boldsymbol{r}_1} (x)|^2  \right) = \infty
\end{align*}
 if we choose $\boldsymbol{a} = (a_1, \cdots, a_n)$ such that $a \boldsymbol{a}^\beta = a \left( \prod_{i=1}^n a_i^{\beta_i}  \right)>0$, which is possible because $\beta_i \in \{ 0,1\}$ and especially $\beta_1 = 1$. Therefore, assertion (i) follows from Theorem \ref{nornon} (ii).

Next, we prove assertion (ii). If $|\beta|=3$, it is easy to see that from Theorem \ref{nono} (i) ,
\begin{equation}\label{cat}
\int e^{a \int_{\mathbb{T}^d} :\Phi^\beta:(x)dx -K \left|\int_{\mathbb{T}^d} :| \Phi |^2:(x)dx \right|^{b}} \mu_{\alpha}^{\otimes n}(d\Phi) < \infty
\end{equation}
for some $K>0$ and $b>0$. When $|\beta|\ge 4$ and  ``$\beta_i$ is odd  for some $1\le i \le n$ or $a>0$'', \eqref{cat} does not follow from Theorem \ref{nono} (ii) because we can see that
by setting $\boldsymbol{r}_1 \coloneqq (1, \cdots, 1) = \sum_{1\le i \le n}e_i$,
\begin{align*}
&\sup_{x\in\mathbb{R}} \left( a \boldsymbol{a}_{\boldsymbol{r}_1} (x)^\beta - C|\boldsymbol{a}_{\boldsymbol{r}_1} (x)|^\kappa  \right) =\sup_{x\in\mathbb{R}} \left( a \boldsymbol{a}^\beta x^{|\beta|}  - C|\boldsymbol{a}_{\boldsymbol{r}_1} (x)|^\kappa  \right) = \infty
\end{align*}
for any $C>0$ and $\kappa<4$ if we choose $\boldsymbol{a} = (a_1, \cdots, a_n)$ such that $a\boldsymbol{a}^\beta =a \left( \prod_{i=1}^n a_i^{\beta_i}  \right)>0$, which is possible when $\beta_i$ is odd for some $1\le i \le n$ or $a>0$.

Finally, we consider the case that $\beta_i$ is even for any $1\le i \le n$ and $a<0$. When $\beta = (2,2,0,\cdots, 0) \iff |\beta| =4$, there holds 
\begin{align*}
&\sup_{\boldsymbol{x}\in\mathbb{R}^n} \left( a\boldsymbol{x}^\beta + \sum_{\gamma < \beta}|\boldsymbol{x}^\gamma|- |\boldsymbol{x}|^{\frac{7}{2}} \right) \\
&\lesssim \sup_{\boldsymbol{x}\in\mathbb{R}^n} \left(  ax_1^2 x_2^2 + |x_1^2x_2| + |x_1^2| + |x_1x_2| + |x_1x_2^2| + |x_2^2| + |x_1| + |x_2| - |x_1|^{\frac{7}{2}} - |x_2|^{\frac{7}{2}} \right) < \infty.
\end{align*}
Therefore, from Theorem \ref{nono} (i), \eqref{cat} holds in this case. When $|\beta| \ge 6$, we can prove that for some $\boldsymbol{a}\in\mathbb{R}^n$, 
\[ \sup_{x\in\mathbb{R}} \left( a \boldsymbol{a}_{\boldsymbol{r}_1} (x)^\beta + \sum_{\gamma <\beta} |\boldsymbol{a}_{\boldsymbol{r}_1} (x)^\gamma| - C|\boldsymbol{a}_{\boldsymbol{r}_1} (x)|^\kappa  \right) = \infty \]
for any $C>0$ and $\kappa<4$, where $\boldsymbol{r}_1 \coloneqq (1, \cdots, 1) = \sum_{1\le i \le n}e_i$. Indeed, letting $i_0 \in \{ 1, \cdots n\}$ and $\boldsymbol{a} = (a_1, \cdots, a_n) \in \mathbb{R}^n$ be such that $\beta_{i_0} = \min \{ \beta_i \ ;\ 1\le i\le n, \beta_i >0 \}$ and $a_{i_0} =0$,
\begin{align}
&\sup_{x\in\mathbb{R}} \left( a \boldsymbol{a}_{\boldsymbol{r}_1} (x)^\beta + \sum_{\gamma <\beta} |\boldsymbol{a}_{\boldsymbol{r}_1} (x)^\gamma| - C|\boldsymbol{a}_{\boldsymbol{r}_1} (x)|^\kappa  \right)\notag\\
&\ge \sup_{x\in\mathbb{R}} \left( a  \Big( \prod_{i \neq i_0} a_i^{\beta_i}  \Big) 0^{\beta_{i_0}}x^{|\beta|} + |\boldsymbol{a}_{\boldsymbol{r}_1} (x)^{\beta - \beta_{i_0}}| - C|\boldsymbol{a}_{\boldsymbol{r}_1} (x)|^\kappa  \right)\notag \\
&\ge \sup_{x\in\mathbb{R}} \left(\left| \Big(  \prod_{i \neq i_0} a_i^{\beta_i}  \Big) x^{|\beta | -\beta_{i_0}}   \right| - C|\boldsymbol{a}_{\boldsymbol{r}_1} (x)|^\kappa  \right). \label{saishuu}
\end{align}
Here, the supremum in the right hand side of \eqref{saishuu} is infinite for any $\kappa <4$ and $C>0$ when we choose $\boldsymbol{a} \in \mathbb{R}^n$ such that $\prod_{i \neq i_0} a_i^{\beta_i} \neq 0$ because $|\beta| - \beta_{i_0} \ge 4$ in this case. Indeed, when $\beta_{i_0} \le 2$, we have  $|\beta| - \beta_{i_0} \ge 4$, noting that we are assuming $|\beta| \ge 6$. When $\beta_{i_0} > 2$, we can take $i_1 \neq i_0$ such that $\beta_{i_1} \ge 4$ from the way that we chose $i_0$ because all $\beta_i$ are even. This especially means that $|\beta| - \beta_{i_0} \ge \beta_{i_1} \ge 4$.

\end{proof}

\subsection{Homogenous polynomials}\label{ex3}
Consider the case where $\alpha = \frac{d}{2}$ and $F$ is a homogeneous polynomial of degree $3$ or larger i.e.
\[ F(\boldsymbol{x}) = \sum_{\beta \in A} c_\beta \boldsymbol{x}^\beta \]
with $A\subset \{ \beta\in\mathbb{N}^n \ ;\ |\beta| = k\}$, $c_\beta \neq 0$ and $k\ge 3$. In this particular case, we can completely determine whether the Gibbs measure is normalizable by applying Theorem \ref{nornon}.
\begin{thm}
In the above setting, the following conditions are equivalent to each other.
\begin{enumerate}
\item
There holds
\[ \int e^{ \int_{\mathbb{T}^d} :F(\Phi):(x)dx} \mu_{\alpha}^{\otimes n}(d\Phi) < \infty. \]
\item
There holds
\[ \sup_{\boldsymbol{x}\in\mathbb{R}^n}\left( F(\boldsymbol{x}) + C\sum_{\beta\in A^-}|\boldsymbol{x}^\beta|  \right) < \infty \]
for any $C>0$.
\item
There holds
\[ F(\boldsymbol{x}) \lesssim -\sum_{\beta\in A} |\boldsymbol{x}^\beta|     \]
uniformly in $\boldsymbol{x}\in\mathbb{R}^n$.
\end{enumerate}
\end{thm}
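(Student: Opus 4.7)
The proof strategy is cyclic: (iii)$\Rightarrow$(i)$\Rightarrow$(iii), from which the equivalences involving (ii) follow. The key reduction, which permeates the whole argument, is that a homogeneous $F$ of degree $k$ satisfies $F(r\boldsymbol{u})=r^k F(\boldsymbol{u})$ and $|\boldsymbol{x}^\gamma|=r^{|\gamma|}|\boldsymbol{u}^\gamma|$ when $\boldsymbol{x}=r\boldsymbol{u}$ with $r\ge 0$ and $\boldsymbol{u}\in S^{n-1}$; since $|\gamma|<k$ for each $\gamma\in A^-$, each of the three conditions reduces to a statement about $F$ restricted to the compact sphere $S^{n-1}$.

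For (iii)$\Rightarrow$(i) I would apply Theorem \ref{nornon} (i) with scaling-matching weights $q(\gamma)=k/|\gamma|$ for $\gamma\in A^-$ with $|\gamma|\ge 1$ (and any $q(0)>1$ for the constant term). These make each $|\boldsymbol{x}^\gamma|^{q(\gamma)}$ homogeneous of degree $k$, so the bracket in \eqref{nornon8} scales as $r^k\bigl(F(\boldsymbol{u})+\sum_\gamma|\boldsymbol{u}^\gamma|^{k/|\gamma|}\bigr)+O(1)-mr^2$. Combining (iii) with a Young-type bound of the form $|\boldsymbol{u}^\gamma|^{k/|\gamma|}\le\varepsilon\sum_{\beta\in A}|\boldsymbol{u}^\beta|+C_\varepsilon$ on $S^{n-1}$ (whose validity rests on $\gamma<\beta$ for some $\beta\in A$) renders the $r^k$ coefficient nonpositive, and the sup-condition of Theorem \ref{nornon} (i) is verified.

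For (i)$\Rightarrow$(iii) I would argue contrapositively via Theorem \ref{nornon} (ii). Failure of (iii) yields, by compactness of $S^{n-1}$, a direction $\boldsymbol{u}^\ast$ with $F(\boldsymbol{u}^\ast)\ge 0$ and $\sum_{\beta\in A}|\boldsymbol{u}^{\ast\beta}|>0$. Rational approximation produces $\boldsymbol{a}\in\mathbb{R}^n$ and $\boldsymbol{r}=(1,\dots,1)\in\mathbb{N}^n$ such that the monomial path $\boldsymbol{a}_{\boldsymbol{r}}(x)=(a_1 x,\dots,a_n x)$ realises $F(\boldsymbol{a}_{\boldsymbol{r}}(x))\gtrsim x^k$ while $m|\boldsymbol{a}_{\boldsymbol{r}}(x)|^2+C\sum_{\gamma\in A^-}|\boldsymbol{a}_{\boldsymbol{r}}(x)^\gamma|=O(x^{k-1})$; since $k\ge 3$, this produces the required blow-up \eqref{nornon9} and hence the non-normalizability. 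The equivalence (ii)$\iff$(iii) then drops out of the same scaling: (ii) asserts that $r^k F(\boldsymbol{u})+C\sum_{\gamma\in A^-}r^{|\gamma|}|\boldsymbol{u}^\gamma|$ is uniformly bounded above in $(r,\boldsymbol{u})$, which for homogeneous $F$ is exactly (iii) after absorbing the lower-order $|\boldsymbol{u}^\gamma|$ via Young's inequality.

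The principal obstacle lies in the Young-inequality step for (iii)$\Rightarrow$(i): near directions $\boldsymbol{u}^\ast\in S^{n-1}$ where several $|\boldsymbol{u}^{\ast\beta}|$ with $\beta\in A$ vanish simultaneously but some $|\boldsymbol{u}^{\ast\gamma}|$ with $\gamma\in A^-$ remains positive, the naive estimate $|\boldsymbol{u}^\gamma|^{k/|\gamma|}\le\varepsilon\sum_{\beta\in A}|\boldsymbol{u}^\beta|+C_\varepsilon$ is delicate. Handling these degenerate directions uniformly requires either a refined choice of $q(\gamma)$ beyond the simple proportional one, or a decomposition of $S^{n-1}$ according to the vanishing pattern of the $A$-monomials; this is where the freedom in the statement of Theorem \ref{nornon} (i) to choose $q$ is actually exerted.
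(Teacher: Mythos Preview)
Your overall framework matches the paper's: both directions are reduced to Theorem~\ref{nornon} via the isotropic ray $\boldsymbol{r}=(1,\dots,1)$, and the equivalence (ii)$\Leftrightarrow$(iii) is handled by homogeneity and \eqref{kaba} in the same spirit. For (iii)$\Rightarrow$(i) the paper is no more detailed than you are (it simply writes ``Theorem~\ref{nornon}~(i), \eqref{kaba} and Young's inequality''), so the obstacle you flag is neither more nor less resolved there.

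There is, however, a genuine gap in your argument for (i)$\Rightarrow$(iii). Failure of (iii) does \emph{not} in general produce a direction with $F(\boldsymbol{u}^\ast)>0$; it may only give $F(\boldsymbol{u}^\ast)=0$, and then your claim ``$F(\boldsymbol{a}_{\boldsymbol{r}}(x))\gtrsim x^k$'' is false --- $F$ vanishes identically along that ray. A concrete instance is $F(x,y)=-(x^2-y^2)^2$: here $F\le 0$ everywhere, (iii) fails at $\boldsymbol{u}^\ast=(1,1)/\sqrt{2}$, yet $F(x,x)\equiv 0$, so your blow-up never materialises. The paper handles exactly this case by a different mechanism: instead of relying on $F$, it uses the term $C\sum_{\gamma\in A^-}|\boldsymbol{a}_0^\gamma|\,|x|^{|\gamma|}$ in \eqref{nornon9}. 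Since $\max_{\gamma\in A^-}|\gamma|=k-1\ge 2$ and $C>0$ is at our disposal, a surviving lower-order monomial (in the example above, $(3,0)\in A^-$ with $\boldsymbol{a}_0^{(3,0)}=1$ contributes $C|x|^3$) dominates $m|\boldsymbol{a}_0|^2x^2$ and forces the supremum to $+\infty$. In short, once $F\le 0$ is established, the non-normalizability is driven by the $A^-$-sum with large $C$, not by $F$ itself; this is the idea you are missing. (Minor point: no rational approximation is needed --- Theorem~\ref{nornon}~(ii) already allows $\boldsymbol{a}\in\mathbb{R}^n$, so one takes $\boldsymbol{a}=\boldsymbol{u}^\ast$ directly.)
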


\begin{proof}
First, we prove the assertion (i) $\implies$ (iii). We assume that (i) holds. Then, from Theorem \ref{nornon} (ii), there holds
\[ \sup_{x\in\mathbb{R}}\left( F(\boldsymbol{a}_{\boldsymbol{r}}(x)) + C\sum_{\beta\in A^-}|\boldsymbol{a}_{\boldsymbol{r}}(x)^\beta| -m|\boldsymbol{a}_{\boldsymbol{r}}(x)|^2  \right) < \infty    \]
for any $\boldsymbol{a}=(a_1,\cdots , a_n)\in \mathbb{R}^n$, $\boldsymbol{r}= (r_1, \cdots, r_n)\in \mathbb{N}^n$, $m>\frac{1}{2}$ and $C>0$. Especially, by setting $\boldsymbol{r}=(1,1,\cdots,1)$, we obtain
\begin{equation} \label{kama}
\sup_{x\in\mathbb{R}}\left( F(\boldsymbol{a})x^k + C\sum_{\beta\in A^-}|\boldsymbol{a}^\beta| |x|^{|\beta|} -m\sum_{i=1}^n a_i^2x^2  \right) < \infty    
\end{equation}
for any $\boldsymbol{a}\in \mathbb{R}^n$ and $C>0$. Therefore, we can see that $k$ is an even integer and there holds
\[ \sup_{\boldsymbol{a}\in \mathbb{R}^n}F(\boldsymbol{a})\le 0\]
because 
\begin{equation}\label{kaba}
\max_{\beta\in A^-}|\beta| = k-1 < k
\end{equation}
and $k\ge 3$. Moreover, if we assume $F(\boldsymbol{a}_0) = 0$ for some $\boldsymbol{a}_0\neq 0$, we get
\begin{align*}
&\sup_{x\in\mathbb{R}}\left( F(\boldsymbol{a}_0)x^k + C\sum_{\beta\in A^-}|\boldsymbol{a}_0^\beta||x|^{|\beta|} -m\sum_{i=1}^n a_{0,i}^2x^2  \right) \\
&= \sup_{x\in\mathbb{R}}\left( C\sum_{\beta\in A^-}|\boldsymbol{a}_0^\beta||x|^{|\beta|} -m\sum_{i=1}^n a_{0,i}^2x^2  \right) = +\infty
\end{align*}
for any $C \gg 1$ from \eqref{kaba} and this contradicts to \eqref{kama}. Therefore, $F(\boldsymbol{a})<0$ for any $\boldsymbol{a}\in\mathbb{R}^n\backslash \{ \boldsymbol{0}\}$. In particular, 
\begin{equation}\label{hana}
\sup_{|\boldsymbol{x}|=1} F(\boldsymbol{x}) \eqqcolon -\epsilon_1 <0.
\end{equation}
Now, we define $F^{\epsilon_1}(\boldsymbol{x}) \coloneqq F(\boldsymbol{x}) + \frac{\epsilon_1}{n\times \# A}\sum_{\beta \in A}|\boldsymbol{x}^\beta|$. Then, there holds $\sup_{\boldsymbol{x}\in\mathbb{R}^n}F^{\epsilon_1}(\boldsymbol{x})\le 0$ and (iii) holds. Indeed, if we assume $F^{\epsilon_1}(\boldsymbol{a}^1)> 0$ for some $\boldsymbol{a}^1\in\mathbb{R}^n$, there holds
\[ F^{\epsilon_1}(a_1^1x, a_2^1x,\cdots, a_n^1x) = x^k F(\boldsymbol{a}^1) + x^k \frac{\epsilon_1}{n\times \# A}\sum_{\beta\in A}|(\boldsymbol{a}^1)^\beta| = x^k F^{\epsilon_1}(\boldsymbol{a}^1) >0     \]
for any $x>0$ and this contradicts to \eqref{hana} because
\begin{align*}
\sup_{|\boldsymbol{x}|=1} F^{\epsilon_1}(\boldsymbol{x}) &= \sup_{|\boldsymbol{x}|=1} \left( F(\boldsymbol{x}) + \frac{\epsilon_1}{n\times \# A}\sum_{\beta \in A}|\boldsymbol{x}^\beta| \right)\\
&\le \sup_{|\boldsymbol{x}|=1} \left( F(\boldsymbol{x}) + \frac{\epsilon_1}{n\times \# A}\sum_{\beta \in A}\sum_{i=1}^n\frac{\beta_i}{k}x_i^k \right)\\
&\le \sup_{|\boldsymbol{x}|=1} \left( F(\boldsymbol{x})+\epsilon_1  \right),
\end{align*}
where we used Young's inequality and $\sum_{i}\beta_i = k$.
The assertion (ii) $\implies$ (iii) follows from a similar argument and (iii) $\implies$ (ii) is obvious from \eqref{kaba}. The assertion (iii) $\implies$ (i) can be seen from Theorem \ref{nornon} (i), \eqref{kaba} and Young's inequality. 

\end{proof}

\subsection{The critical case}\label{ex4}
There are cases that the Gibbs measure associated with polynomial $\lambda F$ with $\lambda >0$ is normalizable only when $\lambda$ is sufficiently small. Such phenomena can occur in the ``critical'' case, see \cite{phi3, heat} and references therein. In this section, we see one of such examples.

Let $n=2$, $\alpha = \frac{3m+1}{6m+3}d$ and $F(x,y)=x^m y^3 - Cx^{4m+2}$ with $m\in\mathbb{N}$ and $C>0$. Then, we can see that this example does not satisfy the assumption of Theorem \ref{nornon1} (i) because for any $C'>0$ and $\kappa < \frac{2d}{d-\alpha}=\frac{12m+6}{3m+2}$,
\begin{align*}
F(x^3,Kx^{3m+2}) - C'|(x^3,Kx^{3m+2})|^\kappa \gtrsim K^3 x^{12m + 6} - Cx^{12m+6} - C'_K|x|^{\kappa (3m+2)} \rightarrow +\infty
\end{align*}
as $x\rightarrow \infty$ if $K\gg 1$. However, it turns out that when $m=1$ and $C$ is sufficiently large, this example ``almost'' satisfies the assumption of Theorem \ref{nornon1} (i) and we can prove the following assertion, see Section \ref{criti} for the proof.
\begin{thm}\label{corocoro}
Let $n=2$, $\alpha = \frac{4}{9}d$ and $F(x,y)=x y^3 -Cx^{6}$ with $C \ge 100$. Then, there exists $\lambda_1 \ge \lambda_0 >0$ such that the followings hold:
\begin{enumerate}
\item
For any $0\le \lambda < \lambda_0$, 
\[ \int e^{ \lambda \int_{\mathbb{T}^d} :F(\Phi):(x)dx -K \left|\int_{\mathbb{T}^d}:  \Phi_1 ^2:(x) + :   \Phi_2 ^2:(x)dx \right|^{9}} \mu_{\alpha}^{\otimes 2}(d\Phi) < \infty  \]
for some $K >0$ which can be chosen independently of $\lambda$, where $\mu_{\alpha}^{\otimes 2}(d\Phi)= \mu_{\alpha}\otimes\mu_{\alpha}(d\Phi_1 d\Phi_2)$.
\item
For any $\lambda > \lambda_1$,
\[ \int e^{ \lambda \int_{\mathbb{T}^d} :F(\Phi):(x)dx -K \left|\int_{\mathbb{T}^d}:  \Phi_1 ^2:(x) + : \Phi_2 ^2:(x)dx \right|^b} \mu_{\alpha}^{\otimes 2}(d\Phi) = \infty  \]
for any $K>0$ and $b>0$.
\end{enumerate}
\end{thm}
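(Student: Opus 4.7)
The plan is to apply the Bou\'e--Dupuis variational formula as in the proofs of Theorem \ref{nornon1} (developed in Sections \ref{norma3} and \ref{norma4}), adapted to the critical scaling of this example. Note that at $\alpha = \tfrac{4}{9}d$ we have $\tfrac{2d}{d-\alpha} = \tfrac{18}{5}$ and $\tfrac{d}{d-2\alpha} = 9$, and under the scaling $\boldsymbol{a}_{\boldsymbol r}(x) = (a_1 x^3, a_2 x^5)$ with $\boldsymbol r = (3,5)$ one computes
\[
F(\boldsymbol a_{\boldsymbol r}(x)) = (a_1 a_2^3 - C a_1^6)\, x^{18}, \qquad |\boldsymbol a_{\boldsymbol r}(x)|^{18/5} \asymp |a_2|^{18/5}\, x^{18}.
\]
Both sides grow at order $x^{18}$, so the polynomial sits exactly on the boundary of Theorem \ref{nornon1} (i); the $L^2$-damping with the critical exponent $b = 9$ is what must compensate, and the assumptions $C \ge 100$ together with $\lambda$ small (resp.\ large) are the quantitative tools used to close the estimate in each direction.

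\textbf{Part (i).} Starting from the Bou\'e--Dupuis representation
\[
-\log Z_{\lambda,K} = \inf_{\theta}\mathbb E\!\left[ -\lambda R^F(Y+\Theta) + K |Q(Y+\Theta)|^{9} + \tfrac12\|\theta\|_{L^2_t L^2_x}^{2}\right],
\]
with $\Theta = I(\theta)$ and $Q(\Phi) = \int(:\Phi_1^2:+:\Phi_2^2:)\,dx$, I would expand $R^F(Y+\Theta)$ into Wick powers of $Y$ paired with polynomials in $\Theta$. The top-degree pure-drift contributions are
\[
\lambda\!\int \Theta_1 \Theta_2^{3}\,dx - \lambda C\!\int \Theta_1^{6}\,dx .
\]
Applying $|\Theta_1 \Theta_2^{3}| \le \varepsilon\,\Theta_1^{6} + C_\varepsilon\,|\Theta_2|^{18/5}$ with $\varepsilon$ small compared to $C$ keeps $-\lambda(C-\varepsilon)\int\Theta_1^{6}$ as useful damping, while the leftover $\lambda C_\varepsilon\int |\Theta_2|^{18/5}$ is absorbed via the critical interpolation $\|\Theta_2\|_{L^{18/5}} \lesssim \|\Theta_2\|_{L^2}^{\eta}\|\Theta_2\|_{H^{\alpha}}^{1-\eta}$, bounding the $L^2$-factor by $|Q(Y+\Theta)|^{1/2}$ (after subtracting the Gaussian part) and the $H^\alpha$-factor by $\|\theta\|_{L^2_t L^2_x}$. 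Mixed $Y$-$\Theta$ terms from the Wick expansion are handled by the same Young/interpolation devices used for Theorem \ref{nornon1} (i). The hypothesis $C \ge 100$ secures enough room in the numerical constants, so that for $\lambda < \lambda_0 = \lambda_0(C,K)$ the full variational functional admits a uniform lower bound, giving finiteness of the partition function.

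\textbf{Part (ii).} For $\lambda$ large I would imitate the test-drift construction of Theorem \ref{nornon1} (ii) in Section \ref{norma4}. Fix $K_0$ with $K_0^3 > 2C$ and build a drift $\theta^*$ producing a field $\Theta^*$ that concentrates around the profile $(\phi(x), K_0 \psi(x))$, with $\phi, \psi$ mimicking $(x^3, x^5)$ on a ball of radius $N^{-1}$ and amplitude parameter $t$. Then
\[
\lambda \int F(\Theta^*)\,dx \asymp \lambda(K_0^3 - C)\, t^{18}\, N^{-d},
\]
while $K|Q(\Theta^*)|^{9}$, $\tfrac12\|\theta^*\|^{2}$, and the stochastic cross-terms scale with distinct powers of $t$ and $N$; optimizing jointly in $(t,N)$ shows that once $\lambda$ exceeds a threshold $\lambda_1 \ge \lambda_0$, the gain $\lambda(K_0^3 - C)t^{18}N^{-d}$ strictly dominates and pushes the variational infimum to $-\infty$, hence the partition function to $+\infty$.

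\textbf{Main obstacle.} The substantive difficulty is the critical bookkeeping in Part (i). Because $F(\boldsymbol a_{\boldsymbol r})$ and $|\boldsymbol a_{\boldsymbol r}|^{18/5}$ share the same top-order growth, every Young and interpolation inequality has to be balanced so that the positive damping from $-\lambda(C-\varepsilon)\int \Theta_1^{6}$ and $K|Q|^{9}$ just beats the loss incurred when splitting $\lambda \Theta_1\Theta_2^{3}$. The hypothesis $C \ge 100$ is the quantitative expression of this margin, and the unavoidable gap $\lambda_1 \ge \lambda_0$ simply reflects that these constants are not sharp rather than any qualitative phase transition.
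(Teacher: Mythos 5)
Your plan for part (i) is essentially the paper's proof: Bou\'e--Dupuis, Wick expansion of $R^{\lambda F}(\mathbb Y + I(\theta))$, Young's inequality splitting $\lambda I_1 I_2^3$ into $\frac{\lambda}{6}\|I_1\|_{L^6}^6 + \frac{5\lambda}{6}\|I_2\|_{L^{18/5}}^{18/5}$ against the damping $100\lambda\|I_1\|_{L^6}^6$, the endpoint interpolation $\|I_2\|_{L^{18/5}}^{18/5}\lesssim \|I_2\|_{H^{4d/9}}^2 + \|I_2\|_{L^2}^{18}$, and coercivity of the critical term $K|Q|^9$ in the form $\|I\|_{L^2}^{18}\lesssim |Q|^9 + \|I\|_{H^\alpha}^2 + Q_N$ (the paper's Lemma \ref{tamanegi}, which is exactly your ``bounding the $L^2$-factor by $|Q|^{1/2}$ after subtracting the Gaussian part''). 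The smallness of $\lambda$ enters precisely where you say it does, through the competition $\lambda c_1 \|I_2\|_{L^2}^{18}$ versus $c_0\|I_2\|_{L^2}^{18}$.

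Part (ii), however, has a genuine gap. The conclusion must hold for \emph{every} $K>0$ and \emph{every} $b>0$, so along your test sequence the penalty $K|Q(Y+\Theta^*)|^{b}$ must stay bounded uniformly in the concentration parameter --- it cannot merely ``scale with a distinct power'' to be beaten by optimizing in $(t,N)$. With a purely deterministic concentrating profile, the critical scaling that makes $\lambda\int F(\Theta^*)\,dx \asymp -\lambda M^d$ comparable to $\frac12\|\theta^*\|^2 \asymp M^d$ forces $\int (\Theta_2^*)^2\,dx \asymp M^{d/9}\to\infty$, so $K|Q|^b\gtrsim K M^{bd/9}$, which is of the same order as the gain when $b=9$ (with an arbitrary constant $K$) and dominates it when $b>9$. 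The paper resolves this by building the drift as $\theta^M = -\langle\nabla\rangle^{4d/9}\mathbf Z^\eta_M + (\alpha_M\langle\nabla\rangle^{4d/9}(g_M^3),\,\beta_M\langle\nabla\rangle^{4d/9}(g_M^5))$, i.e.\ including the stochastic component $-\mathbb Y_M(\eta)$ so that the Wick renormalization contributes the negative deterministic term $-2\mathbb E[Y_{2,M}(\eta)^2]$ to $Q$, and then tuning the amplitude by $\beta_M^2\int g_M^{10} = 2\mathbb E[Y_{2,M}(\eta)^2]$ so that these two divergent contributions cancel exactly and $C_M=O(1)$, whence $K|C_M|^b\lesssim 1$ for all $K,b$. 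A second amplitude constraint, $200\alpha_M^5=\beta_M^3$, simultaneously guarantees $\alpha_M\beta_M^3 - 100\alpha_M^6 = \frac12\alpha_M\beta_M^3>0$ so the interaction term is genuinely $\asymp -\lambda M^d$. Your choice ``$K_0^3>2C$'' captures the second constraint in spirit, but without the stochastic drift component and the $L^2$-cancellation the argument fails for large $K$ or large $b$.
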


%\begin{rem}
%The same statement also holds if we replace the taming functional $K \left|\int_{\mathbb{T}^d} : \Phi_2 ^2:(x)dx \right|^{9}$ with
%\[ K \left|\int_{\mathbb{T}^d} : | \Phi | ^2:(x)dx \right|^{9} =  K \left|\int_{\mathbb{T}^d} :  \Phi_1 ^2:(x) + :\Phi_2^2:(x)dx \right|^{9}.   \]
%This can be proven by a slight modification of our proof.
%\end{rem}

\begin{rem}
It is known that a similar phenomenon occurs for $\Phi^3_3$ model as observed in \cite{phi3}. Our strategy for the proof of Theorem \ref{corocoro} is based on their work.
\end{rem}

\begin{rem}
The author expects that it might also possible to prove similar results for general $m\in\mathbb{N}$ although it is not sure at this point. 
\end{rem}

\section{Preliminary}\label{katana}

\subsection{Sobolev spaces}
We use the notations introduced in Section \ref{jai}. Sobolev spaces $W^{\alpha,p}(\mathbb{T}^d)$ for $\alpha\in\mathbb{R}$ and $1\le p\le \infty$ are defined as the spaces of all distributions $f\in\mathcal{D}'(\mathbb{T}^d)$ with
\[ \| f \|_{W^{\alpha, p}(\mathbb{T}^d)} \coloneqq \left\| \langle \nabla \rangle^{\alpha} f \right\|_{L^p (\mathbb{T}^d)} = \| \sum_{l \in \mathbb{Z}^d} \left( 1 + |l|^2 \right)^\frac{\alpha}{2} \hat{f}(l)e_l \|_{L^p(\mathbb{T}^d)} < \infty. \]
For $p=2$, we write $H^\alpha(\mathbb{T}^d)\coloneqq W^{\alpha,2}(\mathbb{T}^d)$. 
We recall some basic inequalities related to Sobolev spaces, see \cite{bcd,bara9}, for example.
\begin{lemm}{\textup{(duality)}}\label{duality}
For $f,g\in C^\infty(\mathbb{T}^d)$, $\alpha\in\mathbb{R}$, $p,q\in [1,\infty]$ with $\frac{1}{p}+ \frac{1}{q}=1$,
\[  \left|\int_{\mathbb{T}^d} fg dx \right| \le \| f \|_{W^{\alpha,p}(\mathbb{T}^d)} \| g\|_{W^{-\alpha,q}(\mathbb{T}^d)} .\]
\end{lemm}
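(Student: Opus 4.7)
The plan is essentially to transfer the Bessel-type operator $\langle\nabla\rangle^\alpha = (1-\Delta)^{\alpha/2}$ from one factor of the pairing to the other and then apply Hölder's inequality; this is the standard Sobolev duality trick.

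First, I would establish the symmetry identity
$$\int_{\mathbb{T}^d} f(x)\,g(x)\,dx \;=\; \int_{\mathbb{T}^d} \bigl(\langle\nabla\rangle^{\alpha} f\bigr)(x)\,\bigl(\langle\nabla\rangle^{-\alpha} g\bigr)(x)\,dx.$$
Since $f,g\in C^\infty(\mathbb{T}^d)$, both functions admit rapidly converging Fourier expansions in the basis $\{e_l\}_{l\in\mathbb{Z}^d}$, and $\langle\nabla\rangle^{\beta}$ acts by multiplication by the real, positive symbol $(1+|l|^2)^{\beta/2}$. The identity therefore follows from a direct Plancherel-type computation: the composition $\langle\nabla\rangle^{-\alpha}\langle\nabla\rangle^{\alpha}$ is the identity, and the real symbol allows one to redistribute the factor $(1+|l|^2)^{\alpha/2}$ from the coefficients of $g$ to those of $f$ under the pairing. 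Smoothness ensures absolute convergence of all sums, so the manipulation is rigorous.

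Second, I would apply Hölder's inequality for the conjugate exponents $p,q\in[1,\infty]$ with $1/p+1/q=1$ to the right-hand side, yielding
$$\left|\int_{\mathbb{T}^d}\bigl(\langle\nabla\rangle^{\alpha} f\bigr)\bigl(\langle\nabla\rangle^{-\alpha} g\bigr)\,dx\right| \;\le\; \bigl\|\langle\nabla\rangle^{\alpha} f\bigr\|_{L^p(\mathbb{T}^d)}\,\bigl\|\langle\nabla\rangle^{-\alpha} g\bigr\|_{L^q(\mathbb{T}^d)}.$$
By the definitions of the Sobolev norms recalled just before the lemma, the right-hand side is exactly $\|f\|_{W^{\alpha,p}(\mathbb{T}^d)}\,\|g\|_{W^{-\alpha,q}(\mathbb{T}^d)}$, which is the desired inequality.

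There is no real obstacle to speak of: this is a textbook estimate included as background for the later analysis, and the paper itself refers to \cite{bcd,bara9} for such facts. The only point requiring any care is the Fourier-side transfer of the operator, and the $C^\infty$ hypothesis makes this immediate.
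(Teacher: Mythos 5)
Your proof is correct and is the standard argument; the paper itself gives no proof of this lemma, merely citing \cite{bcd,bara9}, and the Plancherel-transfer-plus-H\"older computation you outline is exactly what those references (and the definition of $W^{\alpha,p}$ recalled just above the lemma) reduce it to.
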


\begin{lemm}{\textup{(Sobolev embedding)}}\label{embed}
Let $1\le p \le q <\infty$. Then, the embedding
\[ \| f\|_{L^q(\mathbb{T}^d)} \lesssim \| f \|_{W^{\frac{d}{p}-\frac{d}{q}, p}(\mathbb{T}^d)}    \]
holds. Especially,
\[  \| f\|_{L^q(\mathbb{T}^d)} \lesssim \| f \|_{H^{\frac{d}{2}-\frac{d}{q}}(\mathbb{T}^d)}   .\]
\end{lemm}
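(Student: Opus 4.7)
This is a classical Sobolev embedding on the torus, so the plan is to sketch the standard Bessel potential argument and then specialize to $p=2$. For $p=q$ there is nothing to prove. For $1\le p<q<\infty$, I would set $\alpha = d/p - d/q$ and write $f = \langle \nabla\rangle^{-\alpha} g$ with $g \in L^p(\mathbb{T}^d)$, so that $\|f\|_{W^{\alpha,p}} = \|g\|_{L^p}$ by definition; the target inequality then reduces to the mapping property $\langle \nabla\rangle^{-\alpha}: L^p(\mathbb{T}^d)\to L^q(\mathbb{T}^d)$.

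The operator $\langle \nabla\rangle^{-\alpha}$ is convolution with the periodic Bessel kernel $G_\alpha$, and by periodizing the Euclidean Bessel kernel one sees that $G_\alpha(x) \sim c_d |x|^{\alpha-d}$ near the origin (for $0<\alpha<d$) with exponential decay elsewhere. Since $1/p - 1/q = \alpha/d$ matches the Hardy--Littlewood--Sobolev scaling, the bound $\|G_\alpha \ast g\|_{L^q} \lesssim \|g\|_{L^p}$ follows from the Hardy--Littlewood--Sobolev inequality, i.e.\ from the weak-type $(p,q)$ bound on the Riesz potential $I_\alpha$ combined with Marcinkiewicz real interpolation. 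An alternative and perhaps cleaner route is via Littlewood--Paley decomposition: one has $W^{\alpha,p}(\mathbb{T}^d) \hookrightarrow B^{\alpha}_{p,\infty}(\mathbb{T}^d)$, and then the Besov embedding $B^{\alpha}_{p,\infty}(\mathbb{T}^d)\hookrightarrow L^q(\mathbb{T}^d)$ is obtained by applying Bernstein's inequality dyadically to each Littlewood--Paley block and summing a geometric series in the scale parameter. The second stated inequality is then immediate from the first by taking $p=2$ and using the identification $W^{\alpha,2}(\mathbb{T}^d) = H^\alpha(\mathbb{T}^d)$.

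The only point that deserves care is the behavior at the endpoints: the strong Hardy--Littlewood--Sobolev bound degenerates to a weak-type one when $p=1$, and the Riesz potential fails to map $L^p$ to $L^\infty$. Both pathologies are avoided here since the statement assumes $q<\infty$ (and implicitly $\alpha<d/p$), so no substantive obstacle arises for the admissible range. Because this is an entirely standard textbook result used only as a black box in the variational estimates that follow, the cleanest option in practice is simply to cite it from \cite{bcd} or \cite{bara9}, as the paper does; a self-contained write-up would amount to reproducing one of the two standard arguments sketched above.
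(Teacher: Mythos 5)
The paper offers no proof of this lemma at all --- it is stated as a standard fact with a pointer to \cite{bcd,bara9} --- so there is no internal argument to compare against, and your ultimate recommendation to cite it as a black box matches what the paper does. Your primary route (write $f=\langle\nabla\rangle^{-\alpha}g$, identify $\langle\nabla\rangle^{-\alpha}$ as convolution with the periodized Bessel kernel $G_\alpha(x)\sim|x|^{\alpha-d}$, and invoke Hardy--Littlewood--Sobolev at the matching scaling $1/p-1/q=\alpha/d$) is the standard correct proof for $1<p<q<\infty$, and the $p=2$ specialization is the only case the paper actually uses (in \eqref{jujutu}, Lemma \ref{kann}, and the estimates of Section \ref{criti}).

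Two caveats on the details. First, your Littlewood--Paley alternative does not close as sketched: at the critical regularity $\alpha=d/p-d/q$, Bernstein gives $\|\Delta_jf\|_{L^q}\lesssim 2^{j\alpha}\|\Delta_jf\|_{L^p}$, so the factor $2^{j\alpha}$ is exactly absorbed by the $B^\alpha_{p,\infty}$ norm and there is \emph{no} geometric series left to sum; indeed $B^{\alpha}_{p,\infty}\hookrightarrow L^q$ is false at the critical exponent, and one needs either $B^\alpha_{p,1}$ or the Triebel--Lizorkin embedding $F^\alpha_{p,2}\hookrightarrow F^0_{q,2}=L^q$ (valid for $1<p<q<\infty$). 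Second, your claim that the $p=1$ pathology ``is avoided since $q<\infty$'' is not accurate: for the Bessel-potential space $W^{\alpha,1}=\langle\nabla\rangle^{-\alpha}L^1$ as defined in this paper, the strong-type embedding into $L^{d/(d-\alpha)}$ genuinely fails (HLS is only weak-type at $p=1$), so the lemma as literally stated with $p=1$ and $p<q$ is an overstatement. This causes no harm downstream, since every application in the paper takes $p=2$, but if you wanted a fully correct statement you would restrict to $1<p\le q<\infty$ (or $p=q$).
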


\begin{lemm}{\textup{(interpolation inequality, the Gagliardo-Nirenberg inequality)}}\label{inter}
For $1\le p, p_1, p_2 \le \infty$, $s, s_1, s_2\ge 0$, $\theta \in (0,1)$ with $s= \theta s_1 + (1-\theta)s_2$ with $\frac{1}{p} = \frac{\theta}{p_1} + \frac{1 -\theta}{p_2}$,
\[ \| f\|_{W^{s,p}(\mathbb{T}^d)} \lesssim \| f\|_{W^{s_1,p_1}(\mathbb{T}^d)}^\theta \| f\|_{W^{s_2,p_2}(\mathbb{T}^d)}^{1-\theta}     .\]
\end{lemm}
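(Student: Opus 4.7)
The plan is to establish the Gagliardo-Nirenberg interpolation inequality for Bessel potential spaces on $\mathbb{T}^d$ by combining the Littlewood-Paley square function characterization of $W^{s,p}$ with two applications of Hölder's inequality: one in the dyadic frequency index, and one in the space variable. This is the standard route in \cite{bcd}; the cited reference at the end of the lemma handles the endpoints, so the task is to sketch how the argument assembles.

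First, I would fix a Littlewood-Paley decomposition $\{\Delta_j\}_{j\geq -1}$ on $\mathbb{T}^d$, where $\Delta_j f$ is frequency-localised to $|l|\simeq 2^j$ for $j\geq 0$. For $1<p<\infty$ and any $s\geq 0$, the Bessel potential norm admits the equivalent square function characterisation
\[
\|f\|_{W^{s,p}(\mathbb{T}^d)} \simeq \Bigl\|\bigl(\textstyle\sum_{j}2^{2js}|\Delta_j f|^2\bigr)^{1/2}\Bigr\|_{L^p(\mathbb{T}^d)},
\]
obtained from the Mihlin-type multiplier theorem applied to $\langle\nabla\rangle^{s}$ together with the classical vector-valued Littlewood-Paley theorem. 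This is the analytic input that lets one decouple the scaling exponent $s$ from the integrability exponent $p$.

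Next, I would exploit the hypothesis $s=\theta s_1+(1-\theta)s_2$ to write the pointwise identity
\[
2^{2js}|\Delta_j f|^2 = \bigl(2^{2js_1}|\Delta_j f|^2\bigr)^{\theta}\bigl(2^{2js_2}|\Delta_j f|^2\bigr)^{1-\theta},
\]
and sum in $j$ using Hölder's inequality with conjugate exponents $1/\theta$ and $1/(1-\theta)$ to get, pointwise in $x$,
\[
\textstyle\sum_j 2^{2js}|\Delta_j f|^2 \leq \bigl(\sum_j 2^{2js_1}|\Delta_j f|^2\bigr)^{\theta}\bigl(\sum_j 2^{2js_2}|\Delta_j f|^2\bigr)^{1-\theta}.
\]
Taking square roots and then the $L^p$ norm, I would apply Hölder's inequality in $x$ with the exponents $p_1/(\theta p)$ and $p_2/((1-\theta)p)$, which are indeed conjugate thanks to the relation $1/p=\theta/p_1+(1-\theta)/p_2$. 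Applying the square function characterisation of Step 1 to each of the two resulting factors concludes the proof in the interior range $1<p,p_1,p_2<\infty$.

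The main obstacle is the endpoint cases in which some of $p,p_1,p_2$ equals $1$ or $\infty$, since the Littlewood-Paley square function characterisation of the Bessel potential norm fails there. For these, I would either fall back on Bernstein's inequalities applied dyadic block by dyadic block (combined with a direct telescoping argument on $\|\Delta_j f\|_{L^q}$), or invoke real interpolation between the extreme spaces via the $K$-functional method, as done in \cite{bcd, bara9}. Since the lemma is used in this paper only as a black box cited from the literature, writing out the interior argument as above and pointing to the references for the endpoints is sufficient.
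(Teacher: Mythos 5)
The paper does not prove this lemma at all: it is quoted as a standard fact with a pointer to \cite{bcd,bara9}, so there is no "paper proof" to match. Your argument is the standard one and is correct in the range $1<p,p_1,p_2<\infty$: the square-function characterization of $W^{s,p}$, the pointwise H\"older inequality in $j$ (using $s=\theta s_1+(1-\theta)s_2$), and then H\"older in $x$ with exponents $p_1/(\theta p)$ and $p_2/((1-\theta)p)$ (which are conjugate precisely because $1/p=\theta/p_1+(1-\theta)/p_2$) assemble correctly. Two remarks on the part you defer. First, the endpoint issue is not merely technical: for the full range $1\le p,p_1,p_2\le\infty$ with $p_1\neq p_2$ the Gagliardo--Nirenberg inequality can genuinely fail in borderline cases (this is exactly the content of the Brezis--Mironescu reference \cite{bara9}, whose counterexamples involve $p_2=1$), so "point to the references" is the honest thing to do, but one should not suggest the endpoint cases are routine. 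Second, the instances actually used in this paper are of the form $p=p_1=p_2$ (e.g.\ $p=1$ in \eqref{pr4} and in Section \ref{criti}, $p=2$ in Lemma \ref{gogo}) with $s_1\neq s_2$ and $s$ strictly in between; these do follow from the dyadic route you sketch, via $W^{s_i,p}\hookrightarrow B^{s_i}_{p,\infty}$, the split-the-sum optimization giving $\|f\|_{B^{s}_{p,1}}\lesssim \|f\|_{B^{s_1}_{p,\infty}}^{\theta}\|f\|_{B^{s_2}_{p,\infty}}^{1-\theta}$ for $s_1\neq s_2$, and $B^{s}_{p,1}\hookrightarrow W^{s,p}$, all of which are valid for every $1\le p\le\infty$. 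Spelling that out (rather than the vaguer "telescoping" remark) would make your endpoint discussion fully rigorous for the cases the paper needs.
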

For the proof of the following lemma, see \cite[Lemma 3.4]{wave} and references therein.
\begin{lemm}{\textup{(fractional Leibniz rule, the Kato-Ponce inequality)}}\label{fractional}
For $s\ge 0$ and $1< p_i, q_i, r <\infty$ with $\frac{1}{p_i}+\frac{1}{q_i}=\frac{1}{r}$, $i=1,2$,
\[  \| \langle \nabla \rangle^s (fg) \|_{L^r(\mathbb{T}^d)} \lesssim \| f\|_{L^{p_1}(\mathbb{T}^d)}\| \langle \nabla \rangle^s g  \|_{L^{q_1}(\mathbb{T}^d)} + \| \langle \nabla \rangle^s f  \|_{L^{p_2}(\mathbb{T}^d)}\| g\|_{L^{q_2}(\mathbb{T}^d)} . \]
\end{lemm}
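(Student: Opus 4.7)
The plan is to prove this by the standard Littlewood--Paley theory combined with Bony's paraproduct decomposition. I work on the torus with a dyadic partition of unity $(\varphi_j)_{j\ge -1}$ on $\mathbb{Z}^d$, with $\varphi_j$ supported in an annulus of size $\simeq 2^j$ for $j\ge 0$; set $\Delta_j f=\varphi_j(D)f$ and $S_j f=\sum_{k<j}\Delta_k f$. I exploit the Littlewood--Paley characterization $\|\langle\nabla\rangle^s h\|_{L^r}\simeq \bigl\|\bigl(\sum_j 2^{2js}|\Delta_j h|^2\bigr)^{1/2}\bigr\|_{L^r}$, valid for $1<r<\infty$, so that all the work is reduced to estimating the square function of the product.

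Decompose $fg$ via Bony's formula
\[ fg \;=\; T_f g + T_g f + R(f,g), \qquad T_f g \coloneqq \sum_j S_{j-1}f\,\Delta_j g, \quad R(f,g) \coloneqq \sum_{|j-k|\le 1}\Delta_j f\,\Delta_k g, \]
and estimate each of the three pieces separately in $W^{s,r}$. For the low--high paraproduct $T_f g$, the Fourier support of $S_{j-1}f\,\Delta_j g$ lies in an annulus of size $\simeq 2^j$, so $\Delta_\ell(T_f g)$ is nonzero only for $|\ell-j|\le C$, which makes the square-function sum essentially diagonal. Using the pointwise bound $|\Delta_\ell(S_{j-1}f\,\Delta_j g)|\lesssim Mf\cdot M(\Delta_j g)$ (with $M$ the Hardy--Littlewood maximal function), together with the Fefferman--Stein vector-valued maximal inequality on $L^{q_1}(\ell^2)$ and H\"older in the exponent pair $(p_1,q_1)$, one obtains $\|\langle\nabla\rangle^s T_f g\|_{L^r}\lesssim \|f\|_{L^{p_1}}\|\langle\nabla\rangle^s g\|_{L^{q_1}}$. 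The symmetric estimate $\|\langle\nabla\rangle^s T_g f\|_{L^r}\lesssim\|\langle\nabla\rangle^s f\|_{L^{p_2}}\|g\|_{L^{q_2}}$ is treated identically with the exponent pair $(p_2,q_2)$.

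The main obstacle is the resonant remainder $R(f,g)$: because the two frequencies are comparable, $\Delta_j f\,\Delta_k g$ can produce arbitrarily low output frequencies, so there is no natural way to place all $s$ derivatives on a single factor and a careful redistribution is needed. I would write $\Delta_\ell R(f,g)=\sum_{j\ge \ell-C,\,|j-k|\le 1}\Delta_\ell(\Delta_j f\,\Delta_k g)$, invoke the elementary bound $2^{\ell s}\lesssim 2^{js}+2^{ks}$ (valid since $\ell\le j+C$), and combine this with Cauchy--Schwarz in $j$ and the summability $\sum_{j\ge \ell-C} 2^{-\varepsilon(j-\ell)}<\infty$ for small $\varepsilon>0$ to move the $s$-weight onto either factor at will. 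Together with the pointwise estimate $|\Delta_\ell(\Delta_j f\,\Delta_k g)|\lesssim M(\Delta_j f)\,M(\Delta_k g)$ and a second application of Fefferman--Stein, this produces
\[ \|\langle\nabla\rangle^s R(f,g)\|_{L^r} \;\lesssim\; \|f\|_{L^{p_1}}\|\langle\nabla\rangle^s g\|_{L^{q_1}} + \|\langle\nabla\rangle^s f\|_{L^{p_2}}\|g\|_{L^{q_2}}, \]
either term being acceptable. Summing the paraproduct and resonant contributions yields the Kato--Ponce inequality. The constant depends only on $s,r,p_i,q_i$, and the argument is insensitive to the torus versus Euclidean setting once the Littlewood--Paley tools are set up with a partition of unity on $\mathbb{Z}^d$.
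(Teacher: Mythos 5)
The paper does not prove this lemma at all: it is quoted as a known result, with the proof delegated to the citation \cite[Lemma 3.4]{wave} and the references therein. So there is nothing in the paper to compare against line by line; your proposal supplies an actual argument, and it is the standard one — Bony's decomposition $fg = T_f g + T_g f + R(f,g)$, the Littlewood--Paley characterization of $W^{s,r}$ for $1<r<\infty$, pointwise control of localized pieces by maximal functions, and the Fefferman--Stein vector-valued maximal inequality, all of which transfer to $\mathbb{T}^d$ once the dyadic decomposition is set up on $\mathbb{Z}^d$. This is essentially the Coifman--Meyer/Kato--Ponce proof found in the literature the paper points to, so the route is sound. One point deserves to be stated more carefully: in the resonant term the decay factor you sum over, $2^{-\varepsilon(j-\ell)}$, comes from writing $2^{\ell s} = 2^{-s(j-\ell)}2^{js}$, so $\varepsilon$ is really $s$ and the geometric summability requires $s>0$; for $s=0$ (which the statement allows) there is no decay and the resonant estimate as written does not close, but of course the $s=0$ case is just H\"older's inequality and should be dispatched separately at the outset. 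With that caveat noted, your sketch is a correct and self-contained replacement for the citation.
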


\subsection{Gaussian random variables}\label{pre2}
We prepare a few notations and lemmas related to Gaussian random variables which will be needed later.
First, we recall that the (1-variate) Hermite polynomials $H_l(x;\sigma)$ for $l\in\mathbb{N}, x\in \mathbb{R}$ and $\sigma\in\mathbb{R}$ are defined by 
\begin{equation}
e^{tx-\frac{1}{2}\sigma t^2} = \sum_{k=0}^\infty \frac{t^k}{k!} H_k(x;\sigma).
\end{equation}
The first Hermite polynomials are given by 
\[ H_0 (x;\sigma) = 1,\  H_1(x;\sigma) = x,\ H_2(x;\sigma) = x^2 -\sigma,\ H_3(x;\sigma) = x^3 -3\sigma x. \]
Then, we extend this definition to $H_\beta(\boldsymbol{x};\sigma)$ for $\beta\in\mathbb{N}^n, \boldsymbol{x}\in\mathbb{R}^n$ and $\sigma\in\mathbb{R}$ by
\begin{equation}
H_\beta (\boldsymbol{x};\sigma) \coloneqq \prod^n_{i=1} H_{\beta_i}(x_i;\sigma).
\end{equation}
We can also define $H_\beta(\boldsymbol{x};\sigma)$ by
\begin{equation}
e^{\boldsymbol{t}\cdot\boldsymbol{x}-\frac{1}{2}\sigma |\boldsymbol{t}|^2} = \sum_{\beta \in\mathbb{N}^n} \frac{\boldsymbol{t}^\beta}{\beta!} H_\beta(\boldsymbol{x};\sigma), \ \ \ \ \ \boldsymbol{t}\in\mathbb{R}^n
\end{equation}
and these definitions are equivalent. Then, for $\boldsymbol{x},\boldsymbol{y}\in\mathbb{R}^n$, there holds
\begin{align}
H_\alpha (\boldsymbol{x}+\boldsymbol{y};c)  = \sum_{0\leq \beta \leq \alpha}\binom{\alpha}{\beta}H_\beta(\boldsymbol{x};c) \boldsymbol{y}^{\alpha -\beta}. \label{hersei}
\end{align}

\begin{lemm}\label{gauss}
Let $k,m\in\mathbb{N}$ and $\xi_1 \sim N(0,\sigma_1^2)$ and $\xi_2 \sim N(0,\sigma_2^2)$ are jointly Gaussian random variables. Then, there holds
\[ \mathbb{E} [ H_k (\xi_1; \sigma_1^2) H_m (\xi_2; \sigma_2^2) ] = k! \delta_{k,m} \mathbb{E}[\xi_1 \xi_2]^k \]
where
\begin{gather}
\delta_{k,m} \coloneqq 
\begin{cases}
\displaystyle{1} \ \ \ \mathrm{when} \ k=m \\
\displaystyle{0} \ \ \ \mathrm{when} \ k\neq m \notag
\end{cases}
\end{gather}
denotes the Kronecker delta.
\end{lemm}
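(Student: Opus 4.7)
The plan is to prove this classical identity via the generating function definition of the Hermite polynomials given just before the statement. The structure is very clean: multiply the two generating functions, compute the resulting Gaussian expectation in closed form, and then read off the coefficients.

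First I would introduce formal variables $s,t \in \mathbb{R}$ and consider the product of generating functions
\[
e^{s\xi_1 - \tfrac{1}{2}\sigma_1^2 s^2}\, e^{t\xi_2 - \tfrac{1}{2}\sigma_2^2 t^2} \;=\; \sum_{k,m \geq 0} \frac{s^k t^m}{k!\, m!}\, H_k(\xi_1;\sigma_1^2)\, H_m(\xi_2;\sigma_2^2).
\]
Taking expectations on the left and using that $s\xi_1 + t\xi_2$ is a centered Gaussian with variance $s^2\sigma_1^2 + 2st\,\mathbb{E}[\xi_1\xi_2] + t^2\sigma_2^2$, the exponential moment evaluates to
\[
\mathbb{E}\bigl[e^{s\xi_1 + t\xi_2}\bigr] \;=\; e^{\tfrac{1}{2}s^2\sigma_1^2 + st\,\mathbb{E}[\xi_1\xi_2] + \tfrac{1}{2}t^2\sigma_2^2},
\]
so that after canceling the deterministic factors $e^{-\tfrac{1}{2}\sigma_1^2 s^2}$ and $e^{-\tfrac{1}{2}\sigma_2^2 t^2}$, the left-hand side collapses to $e^{st\,\mathbb{E}[\xi_1\xi_2]}$.

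Next I would expand the right-hand side as a single power series,
\[
e^{st\,\mathbb{E}[\xi_1\xi_2]} \;=\; \sum_{k\geq 0} \frac{(st)^k}{k!}\, \mathbb{E}[\xi_1\xi_2]^k.
\]
Equating the two double power series in $(s,t)$ and comparing coefficients of $s^k t^m$ yields
\[
\frac{1}{k!\,m!}\,\mathbb{E}\bigl[H_k(\xi_1;\sigma_1^2)\, H_m(\xi_2;\sigma_2^2)\bigr] \;=\; \begin{cases} \dfrac{\mathbb{E}[\xi_1\xi_2]^k}{k!} & \text{if } k=m,\\[2pt] 0 & \text{if } k\neq m,\end{cases}
\]
which is exactly the claim after multiplying through by $k!\,m!$.

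There is essentially no main obstacle here beyond justifying the termwise interchange of expectation and the infinite sums, which is harmless because the generating function of the Hermite polynomials is an entire function with absolutely convergent series and the Gaussian variables have exponential moments of every order, so Fubini (or dominated convergence applied to partial sums) applies.
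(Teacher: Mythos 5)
Your argument is correct. Note that the paper does not actually prove this lemma: it simply cites \cite[Lemma 1.1.1]{nua} (Nualart's book), and your generating-function computation --- multiplying the two exponential generating functions, evaluating $\mathbb{E}[e^{s\xi_1+t\xi_2}]$ via the joint Gaussian moment formula so that the product collapses to $e^{st\,\mathbb{E}[\xi_1\xi_2]}$, and matching coefficients of $s^k t^m$ --- is precisely the standard proof of that cited result. The only point requiring care, the interchange of expectation with the double series, is handled adequately by your appeal to absolute convergence and the finiteness of all exponential moments of Gaussians.
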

\begin{proof}
See \cite[Lemma 1.1.1]{nua}.
\end{proof}

\begin{lemm}\label{gausstoku}
Let $k,m\in\mathbb{N}$. Then, for mutually independent centered Gaussian random variables $\xi_1$ and $\xi_2$,
\[ \mathbb{E} [ H_k (\xi_1 + \xi_2; \sigma_{1,2}^2) \xi_1^m ] =  \mathbb{E} [ H_k (\xi_1; \sigma_{1}^2) \xi_1^m ]\]
where $\sigma_1^2 \coloneqq  \mathbb{E} [\xi_1^2]$ and $\sigma_{1,2}^2 \coloneqq  \mathbb{E} [(\xi_1 + \xi_2)^2]$.
\end{lemm}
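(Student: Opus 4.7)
The plan is to read off the identity from the exponential generating function for Hermite polynomials combined with independence. The starting point is the generating relation
\[ e^{tx - \tfrac{1}{2}\sigma t^2} = \sum_{k=0}^\infty \frac{t^k}{k!} H_k(x; \sigma), \]
which lets us identify both sides of the claimed equality as coefficients of $t^k/k!$ in suitable moment generating functions.

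Concretely, I would compute $\mathbb{E}\bigl[ e^{t(\xi_1+\xi_2) - \frac{1}{2}\sigma_{1,2}^2 t^2}\,\xi_1^m \bigr]$ in two ways. On the one hand, expanding the exponential via the generating relation yields
\[ \mathbb{E}\bigl[ e^{t(\xi_1+\xi_2) - \frac{1}{2}\sigma_{1,2}^2 t^2}\,\xi_1^m \bigr] = \sum_{k=0}^\infty \frac{t^k}{k!}\, \mathbb{E}[H_k(\xi_1+\xi_2;\sigma_{1,2}^2)\,\xi_1^m]. \]
On the other hand, using independence of $\xi_1$ and $\xi_2$, the identity $\sigma_{1,2}^2 = \sigma_1^2 + \sigma_2^2$ (which is where centeredness enters), and $\mathbb{E}[e^{t\xi_2 - \frac{1}{2}\sigma_2^2 t^2}] = 1$, the same expectation factorizes as
\[ \mathbb{E}\bigl[ e^{t\xi_2 - \frac{1}{2}\sigma_2^2 t^2}\bigr]\cdot \mathbb{E}\bigl[ e^{t\xi_1 - \frac{1}{2}\sigma_1^2 t^2}\xi_1^m \bigr] = \sum_{k=0}^\infty \frac{t^k}{k!}\,\mathbb{E}[H_k(\xi_1;\sigma_1^2)\,\xi_1^m]. \]
Matching coefficients of $t^k$ gives the desired identity.

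Since both moment generating functions are entire as functions of $t$ (Gaussian tails dominate the polynomial factor $\xi_1^m$), the series manipulations and the exchange of sum and expectation are legitimate, and I do not anticipate any real obstacle. The conceptual point is that the ``extra'' Gaussian factor $e^{t\xi_2 - \frac{1}{2}\sigma_2^2 t^2}$ is Wick-renormalized so that its expectation is exactly $1$, effectively cancelling the contribution of $\xi_2$ at every order in $t$. An alternative route would be to first establish the variance-additive addition formula $H_k(x+y;\sigma_1^2+\sigma_2^2) = \sum_{j=0}^{k}\binom{k}{j} H_j(x;\sigma_1^2) H_{k-j}(y;\sigma_2^2)$ (again via the generating function, by factoring $e^{t(x+y) - \frac{1}{2}(\sigma_1^2+\sigma_2^2)t^2}$) and then observe, using Lemma \ref{gauss} with $m=0$, that $\mathbb{E}[H_l(\xi_2;\sigma_2^2)] = 0$ for $l\ge 1$, so that only the $j=k$ summand survives after taking expectation against $\xi_1^m$; this gives the same conclusion with slightly more bookkeeping.
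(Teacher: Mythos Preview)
Your proof is correct. The generating-function argument is clean and the justification for interchanging sum and expectation is fine, since all moments of Gaussians are finite and the series converge absolutely for every real $t$.

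The paper itself does not give a proof at all; it simply cites \cite[Theorem~3.8]{jans}, which is a general statement about how Wick (Hermite) polynomials behave under orthogonal projections in a Gaussian Hilbert space---in effect, that conditioning a Wick power onto a sub-$\sigma$-algebra generated by a smaller Gaussian space yields the Wick power of the projected variable. Your approach is more elementary and entirely self-contained: it extracts exactly the identity needed by a direct moment-generating-function computation, without invoking the machinery of Wiener chaos. The alternative route you sketch via the addition formula $H_k(x+y;\sigma_1^2+\sigma_2^2)=\sum_j\binom{k}{j}H_j(x;\sigma_1^2)H_{k-j}(y;\sigma_2^2)$ is also valid and is essentially what the abstract projection statement in Janson unpacks to in this concrete setting.
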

\begin{proof}
The statement follows from \cite[Theorem 3.8]{jans}, for example.
\end{proof}

\subsection{The Bou\'e-Dupuis formula}\label{varia}

Let $W_i(t)$, $i=1,2, \cdots ,n$ be mutually independent cylindrical Wiener processes on $L^2(\mathbb{T}^d)$ expressed by
\begin{equation}
W_i (t) =  \sum_{l\in\mathbb{Z}^d} \beta_i^l (t) e_l
\end{equation}
where $(\beta_i^l)_{l\in\mathbb{Z}^d}$ are independent sequences of $\mathbb{C}$-valued standard Brownian motions conditioned with $\bar \beta_i^l = \beta_i^{-l}$ and $ \mbox{var} (\beta_i^l(t))=t$. 
We define $Y_i(t)\coloneqq Y_i^\alpha(t) \coloneqq \langle \nabla \rangle^{-\alpha}W_i(t)$ where $\langle \nabla \rangle \coloneqq (1-\Delta)^{\frac{1}{2}}$. Then, $(Y_1(1),Y_2(2),\cdots,Y_n(1))$ is distributed according to $\mu_{\alpha}^{\otimes n}$. Let $\mathcal{D}'_N(\mathbb{T}^d)\coloneqq \{ f\in\mathcal{D}'(\mathbb{T}^d)\ ;\ \hat f (l) = 0 \ \mbox{for any}\ l \in \mathbb{Z}^d\ \mbox{with}\ |l|>N\}$ and we identify as 
\[ \mathcal{D}'_N(\mathbb{T}^d) \cong \mathbb{C}^{\mathbb{Z}_N^d} \cong \mathbb{R}^{\mathbb{Z}_N^d}\times \mathbb{R}^{\mathbb{Z}_N^d}\]
by the Fourier expansion where $\mathbb{Z}_N^d \coloneqq \{ l \in \mathbb{Z}^d; |l|\leq N$\}.
The following lemma follows from \cite[Theorem 7]{as}, see also \cite[Lemma 5.11]{hartree}.
\begin{lemm}{\textup{(Bou\'e-Dupuis formula)}}\label{var}
Let $N\in \mathbb{N}$. Suppose that $F:\left( \mathbb{R}^{\mathbb{Z}_N^d}\times \mathbb{R}^{\mathbb{Z}_N^d} \right)^n \rightarrow \mathbb{R}$ is a measurable mapping such that
\[ \mathbb{E}\left[ \left| F( P_N Y_1 (1), P_N Y_2 (1),\cdots, P_N Y_n(1)) \right|^p \right] <\infty \]
and 
\[ \mathbb{E}\left[ \left| e^{-F( P_N Y_1 (1), P_N Y_2 (1),\cdots, P_N Y_n(1))} \right|^q \right] <\infty \]
for some $1<p, q<\infty$ with $\frac{1}{p}+\frac{1}{q}=1$. Then,
\begin{align*}
&-\log \mathbb{E} \left[ e^{-F( P_N Y_1 (1), P_N Y_2 (1),\cdots, P_N Y_n(1))} \right] \\
&= \inf_{\theta \in \mathbb{H}} \mathbb{E} \left[ F(( P_N Y_1 (1), P_N Y_2 (1),\cdots, P_N Y_n(1))+ P_N^{\otimes n}I(\theta)(1))+ \frac{1}{2}\int^1_0 \| P_N^{\otimes n} \theta (t)\|^2_{L^2(\mathbb{T}^d)}dt  \right]
\end{align*}
where
\[ I(\theta)(t) \coloneqq I_\alpha(\theta)(t) \coloneqq \int^t_0 \langle \nabla \rangle^{-\alpha} \theta (s)ds\] and $\mathbb{H}$ is the set of progressively measurable processes belonging to $L^2([0,1];(L^2(\mathbb{T}^d)^{\otimes n}))$ $\mathbb{P}$-almost surely.
\end{lemm}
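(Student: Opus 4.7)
The plan is to establish the two inequalities $\le$ and $\ge$ in the variational identity separately, following the classical Bou\'e--Dupuis strategy. A crucial simplification is that the projection $P_N$ reduces the problem to finite dimensions, namely to $\left( \mathbb{R}^{\mathbb{Z}_N^d}\times \mathbb{R}^{\mathbb{Z}_N^d} \right)^n$, so the entire argument can be carried out using only finite-dimensional It\^o calculus, Girsanov's theorem, and the martingale representation theorem in their most classical forms. Neither delicate issues about the domain of the Cameron--Martin shift nor about quasi-sure analysis need arise.

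For the upper bound, I would fix an arbitrary $\theta \in \mathbb{H}$ and first assume, via a truncation argument on $\|\theta\|$, that Novikov's condition holds. The exponential martingale
\[
\mathcal{E}_t \coloneqq \exp\Bigl(-\sum_{i=1}^n \int_0^t \langle P_N^{\otimes n}\theta_i(s), dW_i(s)\rangle_{L^2} - \frac{1}{2}\int_0^t \|P_N^{\otimes n}\theta(s)\|_{L^2}^2\, ds\Bigr)
\]
defines an equivalent measure $\mathbb{Q}$ under which $\tilde W_i(t) \coloneqq W_i(t) + \int_0^t P_N\theta_i(s)\,ds$ is cylindrical Brownian motion. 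Hence under $\mathbb{Q}$ the variable $P_N Y_i(1)$ has the same law as $P_N Y_i(1) - P_N^{\otimes n} I(\theta)(1)$ has under $\mathbb{P}$. Rewriting $\mathbb{E}_\mathbb{P}[e^{-F(P_N Y(1))}]$ as an expectation under $\mathbb{Q}$ against the reciprocal density and applying Jensen's inequality in the form $-\log\mathbb{E}[e^{-X}]\le \mathbb{E}[X]$ yields
\[
-\log \mathbb{E}[e^{-F(P_N Y(1))}] \le \mathbb{E}\Bigl[F(P_N Y(1) + P_N^{\otimes n} I(\theta)(1)) + \tfrac{1}{2}\int_0^1 \|P_N^{\otimes n}\theta(s)\|_{L^2}^2\, ds\Bigr].
\]
The truncation is then removed by a monotone/dominated convergence argument using the hypothesis $\mathbb{E}|F|^p<\infty$.

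For the lower bound, I would construct the optimal drift $\theta^*$ explicitly via the martingale representation theorem. Let $M_t \coloneqq \mathbb{E}[e^{-F(P_N Y(1))}\mid \mathcal{F}_t]$, which by the hypothesis $\mathbb{E}[|e^{-F}|^q]<\infty$ is a strictly positive uniformly integrable martingale with $M_0=\mathbb{E}[e^{-F(P_N Y(1))}]$ and $M_1=e^{-F(P_N Y(1))}$. The martingale representation theorem supplies predictable $Z_i$ with $M_t = M_0 + \sum_i \int_0^t \langle Z_i(s), dW_i(s)\rangle_{L^2}$. Applying It\^o's formula to $-\log M_t$ on $[0,1]$ gives
\[
F(P_N Y(1)) = -\log M_0 - \sum_{i=1}^n \int_0^1 \Bigl\langle \frac{Z_i(s)}{M_s}, dW_i(s)\Bigr\rangle_{L^2} + \frac{1}{2}\int_0^1 \Bigl\|\frac{Z(s)}{M_s}\Bigr\|_{L^2}^2\, ds.
\]
Setting $\theta^*_i(s) \coloneqq -\langle \nabla\rangle^{\alpha}(Z_i(s)/M_s)$ so that $P_N^{\otimes n} I(\theta^*)(1)$ produces the correct shift, and then performing the change of measure $d\mathbb{Q}^*/d\mathbb{P} = M_1/M_0$ (under which the stochastic-integral term has zero expectation and $W_i - \int_0^\cdot P_N\theta^*_i\, ds$ is Brownian), one obtains the reverse inequality realized as equality at $\theta = \theta^*$.

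The main technical obstacle is ensuring that the drift $\theta^*$ constructed from the martingale representation actually lies in $\mathbb{H}$, together with justifying the various changes of measure and the interchange of expectation with the stochastic integral. This is precisely where the dual integrability hypotheses on $F$ enter: $\mathbb{E}[|e^{-F}|^q]<\infty$ keeps $M_t$ controllably bounded away from $0$ in an appropriate integrability sense (so that $Z/M$ is well-defined and square integrable), while $\mathbb{E}|F|^p<\infty$ gives the finiteness of the entropy functional along $\theta^*$ and allows one to identify the expectation of the It\^o term as the target variational quantity. The finite-dimensionality enforced by $P_N$ keeps every stochastic-analysis step within the classical It\^o framework, sidestepping the more delicate issues (anticipative drifts, Malliavin smoothness) that arise in fully infinite-dimensional versions of Bou\'e--Dupuis.
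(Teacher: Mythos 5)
Your overall architecture is the classical Bou\'e--Dupuis route (Girsanov plus Jensen for one inequality, the F\"ollmer drift from martingale representation and It\^o's formula applied to $-\log M_t$ for the other), and it is worth noting that the paper itself does not prove this lemma at all: it simply invokes \cite[Theorem 7]{as} (\"Ust\"unel) and \cite[Lemma 5.11]{hartree}. So any complete proof is necessarily ``different from the paper''; the question is whether your sketch closes the points where the cited proofs actually do work, and there I see two genuine gaps.

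First, in the upper bound the claimed law identification is false for a random adapted drift. Under your $\mathbb{Q}$ one has $P_N^{\otimes n}Y(1)=P_N^{\otimes n}\langle\nabla\rangle^{-\alpha}\tilde W(1)-P_N^{\otimes n}I(\theta)(1)$ with $\tilde W$ a $\mathbb{Q}$-Brownian motion, but the joint $\mathbb{Q}$-law of $(\tilde W,\theta)$ is not the $\mathbb{P}$-law of $(W,\theta)$: under $\mathbb{Q}$ the old process $W$ is a diffusion with drift $-P_N^{\otimes n}\theta$, so $\theta$ (a non-anticipative functional of $W$) changes law as well. Consequently your Jensen step only bounds $-\log\mathbb{E}[e^{-F}]$ by the value of the functional for a \emph{different} admissible pair on a different measure, not by $\mathbb{E}_{\mathbb{P}}\bigl[F(\mathbb{Y}_N+I_N(\theta)(1))+\tfrac12\int_0^1\|P_N^{\otimes n}\theta\|_{L^2}^2dt\bigr]$ for the given $\theta\in\mathbb{H}$, which is what the lemma asserts. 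The standard fix runs Girsanov the other way: since $\mathbb{Y}_N+I_N(\theta)(1)$ is a functional of the shifted Brownian motion alone, $\mathbb{E}_{\mathbb{P}}\bigl[e^{-F(\mathbb{Y}_N+I_N(\theta)(1))}\mathcal{E}_1\bigr]=\mathbb{E}_{\mathbb{P}}\bigl[e^{-F(\mathbb{Y}_N)}\bigr]$, and then Jensen is applied under $\mathbb{P}$ itself (equivalently, one uses the entropy bound $H(\mathbb{P}\circ T^{-1}\,|\,\mathbb{P})\le\tfrac12\mathbb{E}\int\|\theta\|^2$ for adapted shifts $T$). Second, in the lower bound the same transfer problem reappears and is not addressed: after changing to $d\mathbb{Q}^*/d\mathbb{P}=M_1/M_0$, the It\^o identity gives $-\log\mathbb{E}[e^{-F}]$ as a $\mathbb{Q}^*$-expectation of $F$ evaluated at (new Brownian motion $+$ shift) plus the energy of $Z/M$, which is not literally $\mathbb{E}_{\mathbb{P}}\bigl[F(\mathbb{Y}_N+I_N(\theta^*)(1))+\tfrac12\int\|P_N^{\otimes n}\theta^*\|^2\bigr]$ for $\theta^*\in\mathbb{H}$ on the original space; identifying the two (equivalence of the strong and weak formulations of the control problem, typically via approximation by simple drifts, or \"Ust\"unel's entropy machinery) is precisely the substantive content of the cited theorem, as is the relaxation from bounded $F$ to the stated $L^p$/$L^q$ hypotheses, which you flag only in passing. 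Finally, a bookkeeping slip: the drift produced by the martingale representation is the shift of $W$ itself, so $\theta^*=Z/M$ (up to sign convention), not $-\langle\nabla\rangle^{\alpha}(Z/M)$; the operator $\langle\nabla\rangle^{-\alpha}$ is already accounted for in the definition of $I(\theta)$.
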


\section{Normalizability 1}\label{norma1}
\subsection{Proof of Theorem \ref{norA}}
We use the notation introduced in Sections \ref{jai} and \ref{varia}.
Throughout this paper, we write
\begin{align*}
&\mathbb{Y}(t)= (Y_1(t),\cdots, Y_n(t)),\\
& I(\theta)(t)= (I_1(\theta)(t),\cdots, I_n(\theta)(t))
\end{align*}
and
\begin{align*}
&P_N^{\otimes n}\mathbb{Y}(t)= \mathbb{Y}_N(t)= (P_N Y_1(t),\cdots, P_N Y_n(t))=(Y_{1,N}(t),\cdots, Y_{n,N}(t)),\\
&P_N^{\otimes n} I(\theta)(t)= I_N(\theta)(t)= (P_N I_1(\theta)(t),\cdots, P_N I_n(\theta)(t)) = (I_{1,N}(\theta)(t),\cdots, I_{n,N}(\theta)(t)).
\end{align*}
Moreover, when $t=1$, we simplify some symbols and write
\begin{align*}
&\mathbb{Y}(1)= \mathbb{Y} = (Y_1,\cdots, Y_n),\\
& I(\theta)(1)= I(\theta)= (I_1(\theta),\cdots, I_n(\theta))
\end{align*}
and
\begin{align*}
&P_N^{\otimes n}\mathbb{Y}(1)= \mathbb{Y}_N =(Y_{1,N},\cdots, Y_{n,N}),\\
&P_N^{\otimes n} I(\theta)(1)= I_N(\theta)= (I_{1,N}(\theta),\cdots, I_{n,N}(\theta)).
\end{align*}
In the following arguments, the various functions $F(\mathbb{Y}_N)$ of $\mathbb{Y}_N$ satisfying
\[ \sup_{N\in\mathbb{N}}\mathbb{E} \left[ |F(\mathbb{Y}_N)|\right] <\infty\]
will appear such as $\|:\mathbb{Y}_N^\beta: \|^p_{W^{(\alpha -d/2)|\beta|-\epsilon,q}(\mathbb{T}^d)}$ with $\beta \in \mathbb{N}^n$, $\epsilon>0, p\ge 1$ and $1\le q \le \infty$, see Proposition \ref{gono} and \eqref{gonon}. For simplicity of notation, we denote by $Q_N$ these functions. So $Q_N$ can be different from line to line. 

Recall that $R_N^F$ is defined by \eqref{rndef}.
In view of the Bou\'e-Depuis formula (Lemma \ref{var}), it suffices to prove that 
\begin{align}
&\limsup_{N\rightarrow\infty} \inf_{\theta\in\mathbb{H}} \mathbb{E} \Bigg[ -R_N^F(\mathbb{Y}+I(\theta)) + \frac{1}{2}\int^1_0 \| P_N^{\otimes n} \theta (t)\|^2_{L^2(\mathbb{T}^d)}dt \Bigg] >  -\infty \label{moku}
\end{align}
to prove Theorem \ref{norA}, because from Fatou's lemma,
\begin{align*}
&\int e^{ \int_{\mathbb{T}^d} :F(\Phi):(x)dx} \mu_{\alpha}^{\otimes n}(d\Phi) = \int \lim_{N\rightarrow\infty} e^{ R_N^F(\Phi)} \mu_{\alpha}^{\otimes n}(d\Phi) \le \liminf_{N\rightarrow\infty} \int e^{ R_N^F(\Phi)} \mu_{\alpha}^{\otimes n}(d\Phi).
\end{align*}

\begin{proof}[Proof of Theorem \ref{norA}]
Let $\frac{k-1}{2k}d < \alpha \le \frac{d}{2}$. We assume that $F$ satisfies the assumption of Theorem \ref{norA} for some $m<\frac{1}{2}$.
From \eqref{rndef} and \eqref{hersei},
\begin{align}
-R_N^F(\mathbb{Y}+ I(\theta)) &= -\sum_{\beta \in A} c_\beta \int_{\mathbb{T}^d} H_\beta (P_N^{\otimes n}(\mathbb{Y}+ I(\theta));\sigma_{\alpha, N}) dx \notag\\
&= -\int_{\mathbb{T}^d} \left\{ F(P_N^{\otimes n}I(\theta)) -m |P_N^{\otimes n}I(\theta)|^2\right\}dx  \notag \\
&\ - \sum_{\beta \in  A} c_\beta \sum_{0<\gamma\leq \beta,\ \gamma\in\mathbb{N}^n} \binom{\beta}{\gamma}\int_{\mathbb{T}^d} :\mathbb{Y}_N^\gamma: I_N(\theta)^{\beta - \gamma} dx  -m  \int_{\mathbb{T}^d} |P_N^{\otimes n}I(\theta)|^2 dx \notag\\
&\eqqcolon A_N^1 + A_N^2 + A_N^3. \label{pr1}
\end{align}
From \eqref{lalala},
\begin{equation}\label{new1}
A_N^1 \ge \sum_{\beta \in A^-}\int_{\mathbb{T}^d} |I_N(\theta)^{\beta}|^{q(\beta)} dx - C 
\end{equation}
for some $C>0$ and $q: A^- \rightarrow \mathbb{R}_+$ with $q>p^{\alpha,F,q}$. Therefore, noting that there holds 
\begin{equation}\label{new3}
 \| I_N(\theta) \|^2_{H^{\alpha}(\mathbb{T}^d)} \le \int^1_0 \| P_N^{\otimes n}\theta (t)\|^2_{L^2(\mathbb{T}^d)}dt,
\end{equation}
we obtain 
\begin{align}
&A_N^1+A_N^3+  \frac{1}{2}\int^1_0 \| P_N^{\otimes n}\theta (t)\|^2_{L^2(\mathbb{T}^d)}dt \notag \\
&\ge \sum_{\beta\in A^-}\int_{\mathbb{T}^d}|I_N(\theta)^\beta |^{q(\beta)} dx +\frac{1}{2} \| I_N(\theta) \|^2_{{H}^{\alpha}(\mathbb{T}^d)} -m  \| I_N(\theta) \|^2_{L^{2}(\mathbb{T}^d)} - C \notag\\
&\gtrsim \sum_{\beta\in A^-}\int_{\mathbb{T}^d}|I_N(\theta)^\beta |^{q(\beta)} dx + \| I_N(\theta) \|^2_{H^{\alpha}(\mathbb{T}^d)}  - C . \label{pr2}
\end{align}
On the other hand, for each term of $A_N^2$, from Lemma \ref{duality} 
\begin{align}
\left|\int_{\mathbb{T}^d} :\mathbb{Y}_N^\gamma: I_N(\theta)^{\beta - \gamma} dx \right| \leq \| :\mathbb{Y}_N^\gamma:\|_{W^{(\alpha - \frac{d}{2})|\gamma|-\epsilon,\infty}(\mathbb{T}^d)} \| I_N(\theta)^{\beta -\gamma}\|_{W^{(\frac{d}{2}-\alpha)|\gamma| + 2\epsilon,1}(\mathbb{T}^d)}.\label{pr3}
\end{align}
And from the interpolation inequality (Lemma \ref{inter}), 
\begin{align} \| I_N(\theta)^{\beta -\gamma}\|_{W^{(\frac{d}{2}-\alpha)|\gamma| +2\epsilon,1}(\mathbb{T}^d)} \lesssim \| I_N(\theta)^{\beta -\gamma}\|_{L^{1}(\mathbb{T}^d)}^{1-(\frac{d}{2\alpha}-1)|\gamma|-\frac{2\epsilon}{\alpha}}\| I_N(\theta)^{\beta -\gamma}\|_{W^{\alpha,1}(\mathbb{T}^d)}^{(\frac{d}{2\alpha}-1)|\gamma|+ \frac{2\epsilon}{\alpha}}.\label{pr4}
\end{align}

Now, we take any $\left( (\epsilon^1,\tilde \epsilon^1), (\epsilon^2,\tilde \epsilon^2),\cdots ,(\epsilon^n,\tilde \epsilon^n) \right) \in \prod^n_{i=1}R^i(\beta -\gamma)$ where $R^i(\beta -\gamma)$ is the one defined just before Theorem \ref{norA}. Then, from the fractional Leibniz rule (Lemma \ref{fractional}) and the Sobolev embedding (Lemma \ref{embed}),
\begin{align}
&\| I_N (\theta) ^{\beta -\gamma} \|_{W^{\alpha,1}(\mathbb{T}^d)} = \| \prod^{n}_{i=1} I_{i,N}(\theta)^{\beta_i - \gamma_i} \|_{W^{\alpha,1}(\mathbb{T}^d)} \notag \\
&\lesssim \sum_{1\le i \le n\ \beta_i -\gamma_i\neq 0} \| I_{i,N}(\theta)\|_{H^\alpha(\mathbb{T}^d)}\left( \|I_{i,N}(\theta) \|^{\epsilon^i_i}_{L^{\frac{2d}{d-2\alpha}}(\mathbb{T}^d)}  \|I_{i,N}(\theta) \|^{\tilde \epsilon^i_i}_{L^{\kappa_i(q)}(\mathbb{T}^d)} \right) \notag \\
&\ \ \ \ \ \ \ \ \ \ \ \ \ \ \ \ \ \ \ \ \ \ \ \ \ \ \ \ \ \ \ \ \ \ \ \ \  \times\prod_{1\le j \le n,\ j\neq i}\left( \|I_{j,N}(\theta) \|^{\epsilon^i_j}_{L^{\frac{2d}{d-2\alpha}}(\mathbb{T}^d)}  \|I_{j,N}(\theta) \|^{\tilde \epsilon^i_j}_{L^{ \kappa_j(q)}(\mathbb{T}^d)} \right)\notag\\
&\lesssim \sum_{1\le i \le n\ \beta_i-\gamma_i\neq 0} \| I_{i,N}(\theta)\|_{H^\alpha(\mathbb{T}^d)}\left( \prod_{j=1}^n \| I_{j,N}(\theta)\|_{H^\alpha(\mathbb{T}^d)}^{\epsilon^i_j}  \right) \left( \prod_{j=1}^n \| I_{j,N}(\theta)\|_{L^{\kappa_j(q)}(\mathbb{T}^d)}^{\tilde \epsilon^i_j}  \right) \label{jujutu}
\end{align}
where we used 
\begin{align}
\epsilon_j ^i+ \tilde \epsilon_j^i =
\left\{ \begin{array}{l}
\beta_j -\gamma_j\ \ \ \ \ \  \mbox{for}\ j \neq i \\
\beta_j - \gamma_j -1 \  \mbox{for}\ j = i 
\end{array}\right.\ \mbox{if}\ \beta_j -\gamma_j \neq 0
\end{align}
and
\begin{align}
\frac{d-2\alpha}{2d}\sum^n_{j=1}\epsilon_j^i + \sum_{j=1}^n \frac{\tilde \epsilon_j^i}{\kappa_j(q)} + \frac{1}{2} \le 1, 
\end{align}
which are guaranteed by the definition of $R^i(\beta -\gamma)$, to apply the fractional Leibniz rule. 
Therefore, from \eqref{pr3}, \eqref{pr4}, \eqref{jujutu} and Young's inequality,
\begin{align}
&\left|\int_{\mathbb{T}^d} :\mathbb{Y}_N^\gamma: I_N(\theta)^{\beta - \gamma} dx \right| \notag \\
&\leq \epsilon ' \| I_N(\theta)^{\beta -\gamma}\|_{L^{q(\beta -\gamma)}(\mathbb{T}^d)}^{q(\beta -\gamma)} + \epsilon ' \sum_{i=1}^n \| I_{i,N}(\theta)\|^2_{H^\alpha(\mathbb{T}^d)} + \epsilon ' \sum_{i=1}^n \| I_{i,N}\|_{L^{ \kappa_i(q)}(\mathbb{T}^d)}^{ \kappa_i(q)} + C_{\epsilon '}Q_N\label{sime}
\end{align}
for any $\epsilon '>0$ if
\begin{align}
\left\{ 1 - \left(\frac{d}{2\alpha} - 1\right)|\gamma| - \frac{2\epsilon}{\alpha} \right\}\frac{1}{q(\beta-\gamma)} + \left\{  \left(\frac{d}{2\alpha} - 1\right)|\gamma| + \frac{2\epsilon}{\alpha} \right\}\left\{ \frac{1+\sum^n_{j=1}\epsilon_j^i}{2} + \sum_{j=1}^n \frac{\tilde \epsilon_j^i}{ \kappa_j(q)} \right\} <1 \label{gori}
\end{align} 
for any $1\le i \le n$, for sufficiently small $\epsilon>0$. Actually, we can choose $\left( (\epsilon^1,\tilde \epsilon^1), (\epsilon^2,\tilde \epsilon^2),\cdots ,(\epsilon^n,\tilde \epsilon^n) \right) \in \prod^n_{i=1}R^i(\beta -\gamma)$ such that \eqref{gori} holds because  $q(\beta - \gamma) > p^{\alpha,F,q} (\beta-\gamma)$.
Combining \eqref{pr1}, \eqref{pr2} and \eqref{sime}, we get from Lemma \ref{tuitui} below that
\begin{align*}
&\mathbb{E} \left[ -R_N^F(\mathbb{Y}+I(\theta)) + \frac{1}{2}\int^1_0 \| P_N^{\otimes n} \theta (t)\|^2_{L^2(\mathbb{T}^d)}dt \right]\\
&\gtrsim \mathbb{E} \left[ \sum_{\beta \in A^-} \int_{\mathbb{T}^d} |I_N(\theta)^\beta|^{q(\beta)} dx  +   \| I_N(\theta) \|^2_{H^{\alpha}(\mathbb{T}^d)} - \epsilon ' \sum_{i=1}^n \int_{\mathbb{T}^d} |I_{i,N}(\theta)|^{ \kappa_i(q)}dx \right]  - C \notag \\
&\gtrsim \mathbb{E} \left[ \sum_{\beta \in A^-} \int_{\mathbb{T}^d} |I_N(\theta)^\beta|^{q(\beta)} dx  +   \| I_N(\theta) \|^2_{H^{\alpha}(\mathbb{T}^d)}    \right] - C
\end{align*}
by choosing sufficiently small $\epsilon ' >0$. Then, \eqref{moku} follows immediately from this.

\end{proof}

\begin{lemm}\label{tuitui}
Let $q:A^-\rightarrow \mathbb{R}_+$. Then,
\[ \sum_{i=1}^n \int_{\mathbb{T}^d} |I_{i,N}(\theta)|^{ \kappa_i(q)}dx \lesssim \sum_{\beta \in A^-} \int_{\mathbb{T}^d} |I_N(\theta)^\beta|^{q(\beta)} dx \]
where $\kappa_i(q)$ is defined by \eqref{kawadou}. 
\end{lemm}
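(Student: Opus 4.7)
The plan is to exploit the convex-hull definition of $\kappa_j(q)$ in \eqref{kawadou} and to observe that, up to a harmless constant, each term on the left-hand side coincides with a single summand on the right. Fixing $j \in \{1,\dots,n\}$, by \eqref{kawadou} there exist coefficients $z_\beta \ge 0$ with $\sum_{\beta \in A^-} z_\beta \le 1$ such that
\[ \kappa_j(q)\, e_j \;=\; \sum_{\beta \in A^-} z_\beta\, q(\beta)\, \beta. \]
The decisive geometric observation is that comparing the $i$-th component for each $i \neq j$ gives $\sum_{\beta} z_\beta\, q(\beta)\, \beta_i = 0$, and since every factor is nonnegative this forces $z_\beta = 0$ whenever $\beta_i > 0$ for some $i \neq j$. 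Hence only vectors $\beta$ of the form $k e_j$ (with $k \in \mathbb{N}$, $k \ge 1$) can carry weight, and concentrating the full weight on the single best such $\beta$ shows that the convex-hull condition collapses to the equivalent characterization
\[ \kappa_j(q) \;=\; \max \{\, k \cdot q(k e_j) \;:\; k \ge 1 \text{ with } k e_j \in A^- \,\}, \]
with the convention that the empty maximum is $0$.

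With this identification the remainder will be essentially tautological. Assuming first $\kappa_j(q) > 0$, I would let $k_j^\ast$ attain the maximum above and then compute
\[ \int_{\mathbb{T}^d} |I_{j,N}(\theta)|^{\kappa_j(q)}\, dx \;=\; \int_{\mathbb{T}^d} |I_{j,N}(\theta)|^{k_j^\ast q(k_j^\ast e_j)}\, dx \;=\; \int_{\mathbb{T}^d} \bigl| I_N(\theta)^{k_j^\ast e_j} \bigr|^{q(k_j^\ast e_j)}\, dx, \]
which is exactly one nonnegative summand on the right-hand side of the claim. Summing over $j$ and discarding the remaining nonnegative summands then delivers the inequality. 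In the degenerate case $\kappa_j(q) = 0$ (no multiple of $e_j$ lies in $A^-$), the $j$-th integrand on the left is simply $1$, contributing the constant $|\mathbb{T}^d|$, which is absorbed into the implicit constant of $\lesssim$ in the same manner as the additive constants appearing in the application of the lemma within the proof of Theorem \ref{norA}.

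The only genuine obstacle is the combinatorial step of reading off the explicit representation of $\kappa_j(q)$ from the convex hull in \eqref{kawadou}; once one recognizes that the axis condition $\kappa_j(q) e_j \in \mathrm{Conv}(\cdots)$ forces all nontrivial weight onto monomials supported solely on the $j$-th coordinate, the remaining manipulation is a one-line identity. An alternative path would be to apply a weighted Young inequality directly to the convex combination $\kappa_j(q) e_j = \sum_\beta z_\beta q(\beta) \beta$, but the direct vertex-selection argument above is cleaner and makes transparent precisely which term of the right-hand side dominates each summand on the left.
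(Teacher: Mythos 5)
Your proposal is correct and follows essentially the same route as the paper's proof: both express $\kappa_j(q)e_j$ as a convex combination of the points $q(\beta)\beta$ (and $\boldsymbol{0}$), observe that nonnegativity forces all weight onto multiples of $e_j$ so that $\kappa_j(q)\le \max_k k\,q(ke_j)$, and then dominate each left-hand integral by a single summand of the right-hand side up to a harmless additive constant. Your version is marginally sharper in noting that equality holds in the characterization of $\kappa_j(q)$, which lets you bypass the paper's use of the elementary bound $|x|^{p}\lesssim |x|^{p'}+1$ for $p\le p'$.
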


\begin{proof}
Fix $1\le i\le n$. From \eqref{kawadou}, $\kappa_i(q)e_i\in\mbox{Conv}\left( \bigcup_{\beta\in A^-} q(\beta)\beta  \right)$. Thus, there exist some $\{ a_l \}_{l=1}^M\subset \mathbb{R}_+$ and $\{ \beta^l \}_{l=1}^M\subset A^-$ such that 
\begin{equation}\label{kl}
\sum_{l=1}^Ma_l = 1
\end{equation}
and 
\begin{equation}\label{kll}
\kappa_i(q)e_i = \sum_{l=1}^M a_l q(\beta_l)\beta_l.
\end{equation}
From $\kappa_i(q)\ge 0$, $\beta^l\in\mathbb{R}_+^n$ and $q\ge 0$, it is easy to see that there exists $\{ b_l\}_{l=1}^M\subset\mathbb{R}_+$ such that
$\beta_l = b_le_i$ for any $1\le l\le M$. Therefore, from \eqref{kl} and \eqref{kll},
\[ \kappa_i(q) \le \max_{1\le l\le M}q(\beta^l)b_l \eqqcolon q(\beta^L) b_L  .\]
After all, we get
\begin{align*}
\int_{\mathbb{T}^d} |I_{i,N}(\theta)|^{ \kappa_i(q)}dx \lesssim \int_{\mathbb{T}^d} |I_{i,N}(\theta)|^{b_Lq(\beta^L)} dx + 1 &= \int_{\mathbb{T}^d} |I_N(\theta)^{\beta_L}|^{q(\beta^L)} dx + 1 \\
&\lesssim \sum_{\beta \in A^-} \int_{\mathbb{T}^d} |I_N(\theta)^\beta|^{q(\beta)} dx.
\end{align*}

\end{proof}

\section{Non-normalizability 1}\label{norma2}

In this section, we prove the non-normalizability parts of the main theorems in Section \ref{jai2}:  Theorems \ref{nornon} (ii) and \ref{nonA}. We use the notations introduced in Sections \ref{jai} and \ref{varia}.

\subsection{Proof of Theorem \ref{nornon} (ii)}
From the almost sure convergence $R_N^F \rightarrow R^F$ as mentioned in Section \ref{jai}, the dominated convergence theorem and the monotone convergence theorem,
\begin{equation}
\int e^{ \int_{\mathbb{T}^d} :F(\Phi):(x)dx} \mu_{\alpha}^{\otimes n}(d\Phi) = \lim_{L\rightarrow \infty} \lim_{N\rightarrow\infty} \int e^{R_N^F(\Phi)\wedge L} \mu_{\alpha}^{\otimes n}(d\Phi).
\end{equation}
Therefore, in view of Lemma \ref{var}, it suffices to show
\begin{equation} \label{kore}
\limsup_{L\rightarrow\infty}\limsup_{N\rightarrow\infty} \inf_{\theta\in\mathbb{H}} \mathbb{E} \left[ -\left( R_N^F(\mathbb{Y}+I(\theta))\wedge L \right) + \frac{1}{2}\int^1_0 \| P_N^{\otimes n} \theta (t)\|^2_{L^2(\mathbb{T}^d)}dt \right] = -\infty
\end{equation}
to prove Theorems \ref{nornon} (ii) and \ref{nonA}. Although the most part of the proof is basically the same argument, we will prove Theorems \ref{nornon} (ii) and \ref{nonA} separately due to the difference in the details of the proofs. First, we prove Theorem \ref{nornon} (ii).
Our proof is based on the non-normalizability argument used in the works \cite{hartree, phi3, heat}: to prove \eqref{kore}, we consider the sequence $\{\theta^M\}_{M\in\mathbb{N}}\subset\mathbb{H}$ that looks like ``$- \langle \nabla \rangle^{\alpha}\mathbb{Y}_M(1) + \langle \nabla \rangle^{\alpha}f_M$''  where $f_M$ is a suitable deterministic function or constant. The major difference between our argument and the ones in those previous works is the appearance of the third term in \eqref{tokutoku} and the last part of the proof of Theorem \ref{nornon} \textup{(ii)}.

\begin{proof}[Proof of Theorem \ref{nornon} \textup{(ii)}]
Let $\alpha = \frac{d}{2}$. We assume that there exist some $\boldsymbol{a}=(a_1,\cdots,a_n)\in\mathbb{R}^n$, $\boldsymbol{r}= (r_1,\cdots,r_n)\in\mathbb{N}^n$, $m>\frac{1}{2}$ and $C>0$ such that 
\begin{align}
\lim_{x\rightarrow\infty} \left\{  F (\boldsymbol{a}_{\boldsymbol{r}}(x)) + C\sum_{\beta\in A^-}|\boldsymbol{a}_{\boldsymbol{r}}(x)^\beta| -m|\boldsymbol{a}_{\boldsymbol{r}}(x)|^2 \right\} = +\infty,
\end{align}
where $\boldsymbol{a}_{\boldsymbol{r}}(x) \coloneqq (a_1 x^{r_1}, \cdots, a_n x^{r_n})$.
We set $\kappa_1$ and $\kappa_2$ by 
\begin{equation}\label{hq}
F (\boldsymbol{a}_{\boldsymbol{r}}(x)) + C\sum_{\beta\in A^-}|\boldsymbol{a}_{\boldsymbol{r}}(x)^\beta| -m|\boldsymbol{a}_{\boldsymbol{r}}(x)|^2 \asymp |x|^{\kappa_1} \ \ \ \mbox{as}\ |x|\rightarrow \infty
\end{equation}
and
\begin{equation}\label{hq2}
\kappa_2 \coloneqq \max_{\beta\in A^-,\ \boldsymbol{a}^\beta \neq 0}  \beta \cdot \boldsymbol{r}.
\end{equation}
Given $M\in\mathbb{N}$ and $\delta>0$, we define $\theta^M=(\theta^M_1,\cdots,\theta^M_n)\in\mathbb{H}$ by
\begin{equation}\label{tokutoku} 
\theta^M (t) \coloneqq - \langle \nabla \rangle^{\frac{d}{2}}\boldsymbol{Z}_M^\eta (t) + \boldsymbol{a}_{\boldsymbol{r}}(M^{\frac{d+\delta}{\kappa_1}}) + \sigma_{\alpha,M}^{\frac{1}{2}}\boldsymbol{b} 
\end{equation}
where $\boldsymbol{Z}_M^\eta (t) \coloneqq (Z_{1,M}^\eta (t),\cdots,Z_{n,M}^\eta (t))$ is defined by 
\begin{gather}\label{popo}
\boldsymbol{Z}_M^\eta (t) \coloneqq
\begin{cases}
0 \ \ \ \ \ \ \ \ \ \ \ \ \ \ \ \ \   \mbox{for}\ 0\le t < \eta\\
\frac{1}{1-\eta}\mathbb{Y}_{M}(\eta)\ \ \ \ \mbox{for}\ \eta \le t < 1  
\end{cases}
\end{gather}
and the value of $0<\eta < 1$ and $\boldsymbol{b}=(b_1,\cdots,b_n)\in\mathbb{R}^n$ will be determined later. Note that
\[( I(\theta^M)(1)=)\ I (\theta^M)   = -\mathbb{Y}_{M}(\eta) + \boldsymbol{a}_{\boldsymbol{r}}(M^{\frac{d+\delta}{\kappa_1}}) + \sigma_{\alpha,M}^{\frac{1}{2}}\boldsymbol{b}.\]
For simplicity, we write $\boldsymbol{a}_M\coloneqq \boldsymbol{a}_{\boldsymbol{r}}(M^{\frac{d+\delta}{\kappa_1}}) = (a_1M^{\frac{d+\delta}{\kappa_1}r_1},\cdots,a_n M^{\frac{d+\delta}{\kappa_1}r_n})$.
Then, for $N>M$, from \eqref{hersei} and \eqref{rndef},
\begin{align}
-R_N^F(\mathbb{Y}+I(\theta^M)) &= -R_N^F\left(\mathbb{Y}-\mathbb{Y}_M (\eta)+\boldsymbol{a}_M + \sigma_{\alpha, M}^{\frac{1}{2}}\boldsymbol{b}\right)\notag \\
&= -\int_{\mathbb{T}^d} \left\{ F (\boldsymbol{a}_M) + C\sum_{\beta\in A^-}|\boldsymbol{a}_M|^\beta -m|\boldsymbol{a}_M|^2 \right\}dx + \int_{\mathbb{T}^d} C\sum_{\beta\in A^-}|\boldsymbol{a}_M|^\beta dx \notag\\
&\ -\sum_{\beta\in A} c_\beta \sum_{0<\gamma\le\beta,\gamma\in\mathbb{N}^n} \binom{\beta}{\gamma} \int_{\mathbb{T}^d} H_\gamma (\mathbb{Y}_N - \mathbb{Y}_M(\eta) + \sigma_{\alpha,M}^{\frac{1}{2}}\boldsymbol{b};\sigma_{\alpha,N}) \boldsymbol{a}_M^{\beta-\gamma}dx \notag \\
&\ -m  \int_{\mathbb{T}^d} |\boldsymbol{a}_M|^2 dx. \label{messi}
\end{align}
For the third term, from \eqref{hersei} and Proposition \ref{gono},
\begin{align}
&-\sum_{\beta\in  A}  c_\beta \sum_{0<\gamma\le\beta,\ \gamma\in\mathbb{N}^n} \binom{\beta}{\gamma} \int_{\mathbb{T}^d} H_\gamma (\mathbb{Y}_N - \mathbb{Y}_M(\eta) + \sigma_{\alpha,M}^{\frac{1}{2}}\boldsymbol{b};\sigma_{\alpha,N}) \boldsymbol{a}_M^{\beta-\gamma}dx \notag \\
&= -\sum_{\beta\in  A}  c_\beta \sum_{0<\gamma\le\beta,\ \gamma\in\mathbb{N}^n} \binom{\beta}{\gamma} \sum_{0\le \tilde \gamma \le \gamma, \tilde \gamma \in \mathbb{N}^n} \binom{\gamma}{\tilde \gamma} \int_{\mathbb{T}^d} :\mathbb{Y}_N^{\tilde\gamma}: (- \mathbb{Y}_M(\eta) + \sigma_{\alpha,M}^{\frac{1}{2}}\boldsymbol{b})^{\gamma - \tilde \gamma} \boldsymbol{a}_M^{\beta-\gamma}dx \notag \\
&\rightarrow - \sum_{\beta\in  A}  c_\beta \sum_{0<\gamma\le\beta,\ \gamma\in\mathbb{N}^n} \binom{\beta}{\gamma} \sum_{0\le \tilde \gamma \le \gamma, \tilde \gamma \in \mathbb{N}^n} \binom{\gamma}{\tilde \gamma} \int_{\mathbb{T}^d} :\mathbb{Y}^{\tilde\gamma}: (- \mathbb{Y}_M(\eta) + \sigma_{\alpha,M}^{\frac{1}{2}}\boldsymbol{b})^{\gamma - \tilde \gamma} \boldsymbol{a}_M^{\beta-\gamma}dx \label{messi2}
\end{align}
as $N\rightarrow\infty$ almost surely. 
On the other hand, 
\begin{align}
&\mathbb{E} \left[ \frac{1}{2} \int^1_0 \| P_N^{\otimes n} \theta^M (t) \|^2_{L^2(\mathbb{T}^d)} dt    \right]\notag\\
&= \sum_{i=1}^n \frac{1}{2} \int_0^1 \int_{\mathbb{T}^d} \mathbb{E} [ (\langle \nabla \rangle ^{\frac{d}{2}} Z^\eta_{i,M}(t))^2 + (a_iM^{\frac{d+\delta}{\kappa_1}r_i} + b_i \sigma_{\alpha,M}^{\frac{1}{2}})^2 ] dxdt\notag \\ 
&= \frac{1}{2} \mathbb{E} \left[  \int_{\mathbb{T}^d} \left\{|\boldsymbol{a}_M|^2 +  \sigma_{\alpha, M}^{\frac{1}{2}}\boldsymbol{a}_M \cdot\boldsymbol{b} + \sigma_{\alpha, M}|\boldsymbol{b}|^2 +  \int^1_0 \left|\langle \nabla \rangle^{\frac{d}{2}}\boldsymbol{Z}_{M}^\eta(t)\right|^2 dt     \right\} dx\right]         \label{messi4}
\end{align}
Therefore, from \eqref{messi}, \eqref{messi2} and \eqref{messi4},
\begin{align}
&\lim_{L\rightarrow\infty}\lim_{N\rightarrow\infty}\mathbb{E} \left[ -\left( R_N^F(\mathbb{Y}+I(\theta^M))\wedge L \right) + \frac{1}{2}\int^1_0 \| P_N^{\otimes n} \theta^M (t)\|^2_{L^2(\mathbb{T}^d)}dt  \right]\notag\\
&= \lim_{L\rightarrow\infty}\mathbb{E} \left[ -\left(\left\{\lim_{N\rightarrow\infty} R_N^F(\mathbb{Y}+I(\theta^M))\right\}\wedge L \right)\right] + \lim_{N\rightarrow\infty}\mathbb{E} \left[ \frac{1}{2}\int^1_0 \| P_N^{\otimes n} \theta^M (t)\|^2_{L^2(\mathbb{T}^d)}dt  \right]\notag\\
&= -\int_{\mathbb{T}^d} \left\{ F (\boldsymbol{a}_M) + C\sum_{\beta\in A^-}|\boldsymbol{a}_M|^\beta -m|\boldsymbol{a}_M|^2 \right\}dx + \int_{\mathbb{T}^d} C\sum_{\beta\in A^-}|\boldsymbol{a}_M|^\beta dx          \notag\\
&\ - \mathbb{E} \left[ \sum_{\beta\in  A}  c_\beta \sum_{0<\gamma\le\beta,\ \gamma\in\mathbb{N}^n} \binom{\beta}{\gamma} \sum_{0\le \tilde \gamma \le \gamma, \tilde \gamma \in \mathbb{N}^n} \binom{\gamma}{\tilde \gamma} \int_{\mathbb{T}^d} :\mathbb{Y}^{\tilde\gamma}: (- \mathbb{Y}_M(\eta) + \sigma_{\alpha,M}^{\frac{1}{2}}\boldsymbol{b})^{\gamma - \tilde \gamma} \boldsymbol{a}_M^{\beta-\gamma}dx  \right]\notag \\
&\ +\frac{1}{2} \mathbb{E} \left[  \int_{\mathbb{T}^d} \left\{ \sigma_{\alpha, M}^{\frac{1}{2}}\boldsymbol{a}_M \cdot\boldsymbol{b} + \sigma_{\alpha, M}|\boldsymbol{b}|^2 +  \int^1_0 \left|\langle \nabla \rangle^{\frac{d}{2}}\boldsymbol{Z}_{M}^\eta(t)\right|^2 dt     \right\} dx\right] \notag \\
&\ + \left(\frac{1}{2} - m\right)\int_{\mathbb{T}^d} |\boldsymbol{a}_M|^2 dx.
\end{align}
If we can choose $0<\eta<1$ and $\boldsymbol{b}\in\mathbb{R}^n$ such that
\begin{align}\label{pistol}
&- \mathbb{E} \left[ \sum_{\beta\in  A}  c_\beta \sum_{0<\gamma\le\beta,\ \gamma\in\mathbb{N}^n} \binom{\beta}{\gamma} \sum_{0\le \tilde \gamma \le \gamma, \tilde \gamma \in \mathbb{N}^n} \binom{\gamma}{\tilde \gamma} \int_{\mathbb{T}^d} :\mathbb{Y}^{\tilde\gamma}: (- \mathbb{Y}_M(\eta) + \sigma_{\alpha,M}^{\frac{1}{2}}\boldsymbol{b})^{\gamma - \tilde \gamma} \boldsymbol{a}_{M}^{\beta-\gamma}dx  \right] \notag \\
&\lesssim -(\log M)^h M^{\frac{\kappa_2}{\kappa_1}(d+\delta)} + C
\end{align}
for some $h>0$, then, from \eqref{hq}, \eqref{hq2} and Lemma \ref{hq3} below,
\begin{align*}
&\lim_{L\rightarrow\infty}\lim_{N\rightarrow\infty} \mathbb{E} \left[  -\left( R_N^F(\mathbb{Y}+I(\theta^{M}))\wedge L \right) + \frac{1}{2}\int^1_0 \| P_N^{\otimes n} \theta^{M} (t)\|^2_{L^2(\mathbb{T}^d)}dt \right]\\
&\lesssim -cM^{d+\delta} + M^{\frac{\kappa_2}{\kappa_1}(d+\delta)} - c(\log M)^h M^{\frac{\kappa_2}{\kappa_1}(d+\delta)} + M^d + (\log M)^{\frac{1}{2}} M^{\frac{r^{\max}}{\kappa_1}(d+\delta)} -cM^{\frac{2r^{\max}}{\kappa_1}(d+\delta)} +  1  \\
&\rightarrow -\infty
\end{align*}
as $M\rightarrow \infty$ where $r^{\max} \coloneqq \max_{1\le i\le n,\ a_i\neq 0} r_i$, and this proves Theorem \ref{nornon} (ii). In the following, we prove the existence of $0<\eta<1$ and $\boldsymbol{b}\in\mathbb{R}^n$ which satisfy \eqref{pistol}. From Lemma \ref{gausstoku} and the mutually independence of $\mathbb{Y}_N-\mathbb{Y}_M$ and $\mathbb{Y}_M$ for $N>M$,
\begin{align}
&- \mathbb{E} \left[ \sum_{\beta\in  A}  c_\beta \sum_{0<\gamma\le\beta,\ \gamma\in\mathbb{N}^n} \binom{\beta}{\gamma} \sum_{0\le \tilde \gamma \le \gamma, \tilde \gamma \in \mathbb{N}^n} \binom{\gamma}{\tilde \gamma} \int_{\mathbb{T}^d} :\mathbb{Y}^{\tilde\gamma}: (- \mathbb{Y}_M(\eta) + \sigma_{\alpha,M}^{\frac{1}{2}}\boldsymbol{b})^{\gamma - \tilde \gamma} \boldsymbol{a}_M^{\beta-\gamma}dx  \right] \notag \\
&=- \lim_{N\rightarrow \infty}\mathbb{E} \left[ \sum_{\beta\in  A}  c_\beta \sum_{0<\gamma\le\beta,\ \gamma\in\mathbb{N}^n} \binom{\beta}{\gamma} \sum_{0\le \tilde \gamma \le \gamma, \tilde \gamma \in \mathbb{N}^n} \binom{\gamma}{\tilde \gamma} \int_{\mathbb{T}^d} :\mathbb{Y}_N^{\tilde\gamma}: (- \mathbb{Y}_M(\eta) + \sigma_{\alpha,M}^{\frac{1}{2}}\boldsymbol{b})^{\gamma - \tilde \gamma} \boldsymbol{a}_M^{\beta-\gamma}dx  \right] \notag \\ 
&= - \mathbb{E} \left[ \sum_{\beta\in  A}  c_\beta \sum_{0<\gamma\le\beta,\ \gamma\in\mathbb{N}^n} \binom{\beta}{\gamma} \sum_{0\le \tilde \gamma \le \gamma, \tilde \gamma \in \mathbb{N}^n} \binom{\gamma}{\tilde \gamma} \int_{\mathbb{T}^d} :\mathbb{Y}_M^{\tilde\gamma}: (- \mathbb{Y}_M(\eta) + \sigma_{\alpha,M}^{\frac{1}{2}}\boldsymbol{b})^{\gamma - \tilde \gamma} \boldsymbol{a}_M^{\beta-\gamma}dx  \right] \notag \\
&= - \mathbb{E} \left[ \sum_{\beta\in  A}  c_\beta \sum_{0<\gamma\le\beta,\ \gamma\in\mathbb{N}^n} \binom{\beta}{\gamma} \int_{\mathbb{T}^d} H_\gamma \left(  \mathbb{Y}_M -\mathbb{Y}_M(\eta) + \sigma_{\alpha,M}^{\frac{1}{2}}\boldsymbol{b}\ ;\ \sigma_{\alpha,M}  \right)   \boldsymbol{a}_M^{\beta-\gamma}dx  \right], \label{hoshu2}
\end{align}
where we used \eqref{hersei} to go from the third line to the fourth line.
Let 
\begin{equation}
\kappa_3 \coloneqq  \max \left\{|\gamma| \ ;\ \beta\in A,\ \gamma\in\mathbb{N}^n,\ 0< \gamma\le\beta,\ (\beta -\gamma)\cdot \boldsymbol{r} = \kappa_2,\ \boldsymbol{a}^{\beta-\gamma}\neq 0      \right\} 
\end{equation}
and
\begin{equation}
S \coloneqq   \left\{ (\beta,\gamma)\in  A \times \mathbb{N}^n \  ; \ 0< \beta\le \beta,\ (\beta-\gamma)\cdot \boldsymbol{r} = \kappa_2,\ \boldsymbol{a}^{\beta-\gamma}\neq 0,\ |\gamma|=\kappa_3   \right\}. \end{equation}
Then, noting that for $(\beta,\gamma )\in S$ and $\sigma_{\alpha,M} \asymp \log M$ from \eqref{dyn}, there holds
\begin{equation}\label{kau}
L(M)\coloneqq \sigma_{\alpha,M}^{\frac{|\gamma|}{2}} M^{\frac{d+\delta}{\kappa_1}((\beta-\gamma)\cdot \boldsymbol{r})} = \sigma_{\alpha,M}^{\frac{\kappa_3}{2}} M^{\frac{\kappa_2}{\kappa_1}(d+\delta )}  \asymp   (\log M)^{\frac{\kappa_3}{2}}M^{\frac{\kappa_2}{\kappa_1}(d+\delta)},
\end{equation}
we can see that 
\begin{align}
p_M (\eta,\boldsymbol{b}) &\coloneqq - \mathbb{E} \left[ \sum_{(\beta,\gamma )\in S}  c_\beta \binom{\beta}{\gamma} \int_{\mathbb{T}^d} H_\gamma \left(  \mathbb{Y}_M -\mathbb{Y}_M(\eta) + \sigma_{\alpha,M}^{\frac{1}{2}}\boldsymbol{b} ; \sigma_{\alpha,M}  \right)   \boldsymbol{a}_M^{\beta-\gamma}dx  \right] \notag\\
&= -\mathbb{E} \left[ \sum_{(\beta,\gamma )\in S}  c_\beta \binom{\beta}{\gamma} \int_{\mathbb{T}^d} H_\gamma \left(  \frac{\mathbb{Y}_M -\mathbb{Y}_M(\eta)}{\sigma_{\alpha,M}^{\frac{1}{2}}} + \boldsymbol{b} ; 1  \right) \sigma_{\alpha,M}^{\frac{|\gamma|}{2}} M^{\frac{(d+\delta)}{\kappa_1}((\beta-\gamma) \cdot \boldsymbol{r})}  \boldsymbol{a}^{\beta-\gamma}dx  \right]\notag \\
&= -L(M)\mathbb{E} \left[ \sum_{(\beta,\gamma )\in S}  c_\beta \binom{\beta}{\gamma} \int_{\mathbb{T}^d} H_\gamma \left(  \frac{\mathbb{Y}_M -\mathbb{Y}_M(\eta)}{\sigma_{\alpha,M}^{\frac{1}{2}}} + \boldsymbol{b} ; 1  \right) \boldsymbol{a}^{\beta-\gamma}dx  \right] \label{hoshu}
\end{align}
is a polynomial of $\eta$ and $\boldsymbol{b}$.
Then, $\tilde p (\eta,\boldsymbol{b})\coloneqq L(M)^{-1} p_M(\eta,\boldsymbol{b})$ does not depend on $M\in\mathbb{N}$ and 
\begin{align*}
\tilde p (1,\boldsymbol{b}) &\simeq -\sum_{(\beta,\gamma )\in S}  c_\beta \binom{\beta}{\gamma}  H_\gamma \left( \boldsymbol{b} ; 1  \right) \boldsymbol{a}^{\beta-\gamma}.
\end{align*}
 Therefore, by choosing $\boldsymbol{b}$ such that 
\[ -\sum_{(\beta,\gamma )\in S} \tilde c_\beta \binom{\beta}{\gamma}  H_\gamma \left( \boldsymbol{b} ; 1  \right) \boldsymbol{a}^{\beta-\gamma} < 0\]
and $0<\eta<1$ sufficiently close to $1$, we get from \eqref{kau} that
\begin{equation}\label{kaka}
p_{M} (\eta,\boldsymbol{b}) = L(M) \tilde p(\eta,\boldsymbol{b}) \asymp  -(\log M)^{\frac{\kappa_3}{2}}M^{\frac{\kappa_2}{\kappa_1}(d+\delta)}. 
\end{equation} 
Note that such $\boldsymbol{b}\in\mathbb{R}^n$ indeed exists because
\[ \int_{\mathbb{R}^d} \sum_{(\beta,\gamma )\in S}  c_\beta \binom{\beta}{\gamma}  H_\gamma \left( \boldsymbol{x} ; 1  \right) \boldsymbol{a}^{\beta-\gamma} e^{-\frac{|\boldsymbol{x}|^2}{2}} d\boldsymbol{x} = 0 \]
from the ($L^2(\mathbb{R}^d;e^{-|x|^2/ 2} dx)$-)orthogonality of the Hermite polynomials and 
\[ \sum_{(\beta,\gamma )\in S} c_\beta \binom{\beta}{\gamma}  H_\gamma \left( \boldsymbol{x} ; 1  \right) \boldsymbol{a}^{\beta-\gamma} \not\equiv 0.\]
On the other hand, letting
\[ \tilde S \coloneqq \{ (\beta,\gamma)\in A \times \mathbb{N}^n\ ;\ 0< \gamma\le \beta \}\  \backslash \  S,\]
we can see from the definition of $S$ and $\kappa_2$, and $\sigma_{\alpha,M} \asymp \log M$ from \eqref{dyn} that
\begin{align}
&- \mathbb{E} \left[ \sum_{(\beta,\gamma )\in \tilde S}  c_\beta \binom{\beta}{\gamma} \int_{\mathbb{T}^d} H_\gamma \left(  \mathbb{Y}_M -\mathbb{Y}_M(\eta) + \sigma_M^{\frac{1}{2}}\boldsymbol{b} ; \sigma_{\alpha,M}  \right)   \boldsymbol{a}_M^{\beta-\gamma}dx  \right] \notag \\
&= -\mathbb{E} \left[ \sum_{(\beta,\gamma )\in \tilde S}  c_\beta \binom{\beta}{\gamma} \int_{\mathbb{T}^d} H_\gamma \left(  \frac{\mathbb{Y}_M -\mathbb{Y}_M(\eta)}{\sigma_{\alpha,M}^{\frac{1}{2}}} + \boldsymbol{b} ; 1  \right) \sigma_{\alpha,M}^{\frac{|\gamma|}{2}} M^{\frac{d+\delta}{\kappa_1}((\beta-\gamma) \cdot \boldsymbol{r})}  \boldsymbol{a}^{\beta-\gamma}dx  \right] \notag \\
& \lesssim \sum_{(\beta,\gamma )\in \tilde S} \sigma_{\alpha,M}^{\frac{|\gamma|}{2}} M^{\frac{d+\delta}{\kappa_1}((\beta-\gamma) \cdot \boldsymbol{r})} \lesssim (\log M)^{h_1} M^{h_2}\label{ronaldo}
\end{align}
with ``$h_2 < \frac{\kappa_2}{\kappa_1}(d+\delta)$'' or ``$h_2 = \frac{\kappa_2}{\kappa_1}(d+\delta)$ and $h_1 < \frac{\kappa_3}{2}$'' . Therefore, from \eqref{kaka} and \eqref{ronaldo}, we can see that \eqref{pistol}  holds.

\end{proof}

Note that the following lemma holds not only for $\alpha= \frac{d}{2}$ but also for any $\alpha \in\mathbb{R}$.
\begin{lemm}\label{hq3}
\[  \mathbb{E} \left[  \int_{\mathbb{T}^d} \int^1_0 \left|\langle \nabla \rangle^{\alpha}\boldsymbol{Z}_{M}^\eta(t)\right|^2 dt   dx\right] \lesssim M^d, \]
where $\boldsymbol{Z}_{M}^\eta(t)$ is defined by \eqref{popo}, 
\begin{equation}\label{oumou}
\mathbb{Y}_M(t) = P_M \langle \nabla \rangle^{-\alpha} \mathbb{W}(t) 
\end{equation}
is defined in Section \ref{varia} and $\mathbb{W}(t)=(W_1(t), \cdots, W_n(t))$.
\end{lemm}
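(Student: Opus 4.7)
The plan is to exploit the fact that $\boldsymbol{Z}_M^\eta(t)$ is piecewise constant in $t$ (zero on $[0,\eta)$ and a random but $t$-independent quantity on $[\eta,1]$), so the time integral collapses trivially and the remaining expectation is just a standard $L^2$-norm of a truncated cylindrical Wiener process.

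More concretely, first I would use the defining formula \eqref{popo} and the identity $\mathbb{Y}_M(t)=P_M\langle\nabla\rangle^{-\alpha}\mathbb{W}(t)$ from \eqref{oumou} to write, for $t\in[\eta,1]$,
\[
\langle\nabla\rangle^{\alpha}\boldsymbol{Z}_M^{\eta}(t)=\frac{1}{1-\eta}\langle\nabla\rangle^{\alpha}\mathbb{Y}_M(\eta)=\frac{1}{1-\eta}P_M\mathbb{W}(\eta),
\]
while the integrand vanishes on $[0,\eta)$. Performing the $t$-integration then yields
\[
\int_0^1\big\|\langle\nabla\rangle^{\alpha}\boldsymbol{Z}_M^{\eta}(t)\big\|_{L^2(\mathbb{T}^d)}^{2}\,dt=\frac{1}{1-\eta}\big\|P_M\mathbb{W}(\eta)\big\|_{L^2(\mathbb{T}^d)^{\otimes n}}^{2}.
\]

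Next, I would expand $P_M W_i(\eta)=\sum_{|l|\le M}\beta_i^{l}(\eta)e_l$ in Fourier series and use Parseval (together with the normalization of the $e_l$) to get $\mathbb{E}\bigl[\|P_M W_i(\eta)\|_{L^2}^{2}\bigr]\lesssim\sum_{|l|\le M}\mathrm{var}(\beta_i^{l}(\eta))=\eta\cdot\#\{l\in\mathbb{Z}^d:|l|\le M\}\asymp\eta M^{d}$. Summing over the $n$ components gives
\[
\mathbb{E}\big[\|P_M\mathbb{W}(\eta)\|_{L^2(\mathbb{T}^d)^{\otimes n}}^{2}\big]\lesssim \eta\,M^{d}.
\]

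Finally, since $\eta\in(0,1)$ is chosen in the proof of Theorem \ref{nornon}\,(ii) independently of $M$, the prefactor $\frac{\eta}{1-\eta}$ is absorbed into the $\lesssim$ and the desired bound $\mathbb{E}\bigl[\int_{\mathbb{T}^d}\int_0^1|\langle\nabla\rangle^{\alpha}\boldsymbol{Z}_M^{\eta}(t)|^{2}\,dt\,dx\bigr]\lesssim M^{d}$ follows. There is no real obstacle here: the only point requiring a little care is verifying that $\langle\nabla\rangle^{\alpha}$ and $P_M$ commute so as to reduce $\langle\nabla\rangle^{\alpha}\mathbb{Y}_M(\eta)$ cleanly to $P_M\mathbb{W}(\eta)$, but this is immediate since both are Fourier multipliers.
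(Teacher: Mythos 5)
Your proposal is correct and follows essentially the same route as the paper: the time integral collapses because $\boldsymbol{Z}_M^\eta$ vanishes on $[0,\eta)$ and equals $\frac{1}{1-\eta}\mathbb{Y}_M(\eta)$ on $[\eta,1]$, the multipliers $\langle\nabla\rangle^\alpha$ and $P_M$ commute to give $\frac{1}{1-\eta}P_M\mathbb{W}(\eta)$, and the second moment is $\asymp \eta\,\#\{|l|\le M\}\asymp \eta M^d$. The only cosmetic difference is that the paper bounds $\mathbb{E}\bigl[|\langle\nabla\rangle^\alpha\boldsymbol{Z}_M^\eta(t)(x)|^2\bigr]$ pointwise in $x$ and then integrates, while you integrate in $x$ first via Parseval; both are the same computation.
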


\begin{proof}
For $0\le t <\eta$,
\[  \mathbb{E} \left[ \left| \langle \nabla \rangle^{\alpha}\boldsymbol{Z}_{M}^\eta(t)\right|^2 \right]=0. \]
For $\eta \le t \le 1$,
\[ \langle \nabla \rangle^{\alpha}\boldsymbol{Z}_{M}^\eta(t) = \frac{1}{1-\eta} \langle \nabla \rangle^{\alpha}\mathbb{Y}_M(\eta) = \frac{1}{1-\eta}\mathbb{W}(\eta)        \]
from \eqref{oumou} and thus it is easy to see that 
\[  \mathbb{E} \left[ \left|\langle \nabla \rangle^{\alpha}\boldsymbol{Z}_{M}^\eta(t)\right|^2   \right]\lesssim_\eta M^d       \]
uniformly in $\eta \le t \le 1$ and $x\in\mathbb{T}^d$.

\end{proof}

\subsection{Proof of Theorem \ref{nonA}}
In this section, we prove Theorem \ref{nonA}.

\begin{proof}[Proof of Theorem \ref{nonA}]
Let $\frac{k-1}{2k}d < \alpha < \frac{d}{2}$. We assume that there exist some $\boldsymbol{a}=(a_1,\cdots,a_n)\in\mathbb{R}^n$, $\boldsymbol{r}= (r_1,\cdots,r_n)\in\mathbb{N}^n$, $m>\frac{1}{2}$ and $q:A^- \rightarrow \mathbb{R}_+$ satisfying \eqref{paka} such that 
\begin{align}
\lim_{x\rightarrow\infty} \left\{  F (\boldsymbol{a}_{\boldsymbol{r}}(x)) + \sum_{\beta\in A^-}|\boldsymbol{a}_{\boldsymbol{r}}(x)^\beta|^{q(\beta)} -m|\boldsymbol{a}_{\boldsymbol{r}}(x)|^2 \right\} = +\infty,
\end{align}
where $\boldsymbol{a}_{\boldsymbol{r}}(x) \coloneqq (a_1 x^{r_1}, \cdots, a_n x^{r_n})$.
We set $\kappa_1$ and $\kappa_2$ by 
\begin{equation}\label{ryu}
F (\boldsymbol{a}_{\boldsymbol{r}}(x)) + \sum_{\beta\in A^-}|\boldsymbol{a}_{\boldsymbol{r}}(x)^\beta|^{q(\beta)} -m|\boldsymbol{a}_{\boldsymbol{r}}(x)|^2 \asymp |x|^{\kappa_1} \ \ \ \mbox{as}\ |x|\rightarrow \infty
\end{equation}
and
\begin{equation}\label{ryu2}
\kappa_2 \coloneqq \max_{\beta\in A^-,\ \boldsymbol{a}^\beta \neq 0} q(\beta) ( \beta \cdot \boldsymbol{r}).
\end{equation}
Note that $\kappa_1 \ge \kappa_2$ holds.
Given $M\in\mathbb{N}$ and $\delta\ge 0$, we define $\theta^M=(\theta^M_1,\cdots,\theta^M_n)\in\mathbb{H}$ by
\[ \theta^M (t) \coloneqq - \langle \nabla \rangle^{\frac{d}{2}}\boldsymbol{Z}_M^\eta (t) + \boldsymbol{a}_{\boldsymbol{r}}(M^{\frac{d+\delta}{\kappa_1}}) + \sigma_{\alpha,M}^{\frac{1}{2}}\boldsymbol{b} \]
where this definition is exactly the same as the one in the proof of Theorem \ref{nornon} (ii).
Then, by proceeding in exactly the same way as in the proof of Theorem \ref{nornon} (ii),  we obtain
\begin{align}
&\lim_{L\rightarrow\infty}\lim_{N\rightarrow\infty}\mathbb{E} \left[ -\left( R_N^F(\mathbb{Y}+I(\theta^M))\wedge L \right) + \frac{1}{2}\int^1_0 \| P_N^{\otimes n} \theta^M (t)\|^2_{L^2(\mathbb{T}^d)}dt  \right]\notag\\
&= -\int_{\mathbb{T}^d} \left\{ F (\boldsymbol{a}_M) + \sum_{\beta\in A^-}|\boldsymbol{a}_M^\beta|^{q(\beta)} -m|\boldsymbol{a}_M|^2 \right\}dx + \int_{\mathbb{T}^d} \sum_{\beta\in A^-}|\boldsymbol{a}_M^\beta|^{q(\beta)} dx \notag\\
&\ - \mathbb{E} \left[ \sum_{\beta\in  A}  c_\beta \sum_{0<\gamma\le\beta,\ \gamma\in\mathbb{N}^n} \binom{\beta}{\gamma} \sum_{0\le \tilde \gamma \le \gamma, \tilde \gamma \in \mathbb{N}^n} \binom{\gamma}{\tilde \gamma} \int_{\mathbb{T}^d} :\mathbb{Y}^{\tilde\gamma}: (- \mathbb{Y}_M(\eta) + \sigma_{\alpha,M}^{\frac{1}{2}}\boldsymbol{b})^{\gamma - \tilde \gamma} \boldsymbol{a}_M^{\beta-\gamma}dx  \right]\notag \\
&\ +\frac{1}{2} \mathbb{E} \left[  \int_{\mathbb{T}^d} \left\{ \sigma_{\alpha, M}^{\frac{1}{2}}\boldsymbol{a}_M \cdot\boldsymbol{b} + \sigma_{\alpha, M}|\boldsymbol{b}|^2 +  \int^1_0 \left|\langle \nabla \rangle^{\frac{d}{2}}\boldsymbol{Z}_{M}^\eta(t)\right|^2 dt     \right\} dx\right] \notag \\
&\ + \left(\frac{1}{2} - m\right)\int_{\mathbb{T}^d} |\boldsymbol{a}_M|^2 dx. \label{ryu3}
\end{align}
We divide the rest of the proof into two parts: The case $\kappa_1 > \kappa_2$ and the case $\kappa_1 = \kappa_2$. \\
\textbf{(In the case that $\boldsymbol{\kappa_1 > \kappa_2}$)}
By a similar argument to the proof of Theorem \ref{nornon} (ii), we can choose $0<\eta<1$ and $\boldsymbol{b}\in\mathbb{R}^n$ such that
\begin{align}\label{pistol2}
&- \mathbb{E} \left[ \sum_{\beta\in  A}  c_\beta \sum_{0<\gamma\le\beta,\ \gamma\in\mathbb{N}^n} \binom{\beta}{\gamma} \sum_{0\le \tilde \gamma \le \gamma, \tilde \gamma \in \mathbb{N}^n} \binom{\gamma}{\tilde \gamma} \int_{\mathbb{T}^d} :\mathbb{Y}^{\tilde\gamma}: (- \mathbb{Y}_{M}(\eta) + \sigma_{\alpha,M}^{\frac{1}{2}}\boldsymbol{b})^{\gamma - \tilde \gamma} \boldsymbol{a}_{M}^{\beta-\gamma}dx  \right] \notag \\
&\lesssim  1 
\end{align}
uniformly in $M \in \mathbb{N}^n$.
Then, from \eqref{ryu}, \eqref{ryu2}, \eqref{ryu3}, \eqref{pistol2}, \eqref{dyn}, Lemma \ref{hq3} and $\kappa_1>\kappa_2$,
\begin{align*}
&\lim_{L\rightarrow\infty}\lim_{N\rightarrow\infty} \mathbb{E} \left[  -\left( R_N^F(\mathbb{Y}+I(\theta^{M}))\wedge L \right) + \frac{1}{2}\int^1_0 \| P_N^{\otimes n} \theta^{M} (t)\|^2_{L^2(\mathbb{T}^d)}dt \right]\\
&\lesssim -cM^{d+\delta} + M^{\frac{\kappa_2}{\kappa_1}(d+\delta)} +(\sigma_{\alpha,M})^{\frac{1}{2}} M^{\frac{r^{\max}}{\kappa_1}(d+\delta)} + \sigma_{\alpha,M} + M^d -cM^{\frac{2r^{\max}}{\kappa_1}(d+\delta)} + 1 \\
&\lesssim -cM^{d+\delta}  + 1 \rightarrow -\infty
\end{align*}
as $M\rightarrow \infty$ where $r^{\max} \coloneqq \max_{1\le i\le n, a_i \neq 0}r_i$, and this proves Theorem \ref{norA} (ii) in this case.
Note that the term $(\sigma_{\alpha,M})^{\frac{1}{2}} M^{\frac{r^{\max}}{\kappa_1}(d+\delta)}$ can be controlled as
\[ (\sigma_{\alpha,M})^{\frac{1}{2}} M^{\frac{r^{\max}}{\kappa_1}(d+\delta)} \le C_\epsilon \sigma_{\alpha, M} + \epsilon M^{\frac{2r^{\max}}{\kappa_1}(d+\delta)}   \]
for any $\epsilon >0$, for example. \\
\textbf{(In the case that $\boldsymbol{\kappa_1 = \kappa_2}$)}
We set $\delta = 0$. If we can choose $0<\eta<1$ and $\boldsymbol{b}\in\mathbb{R}^n$ such that
\begin{align}\label{pistol3}
&- \mathbb{E} \left[ \sum_{\beta\in  A}  c_\beta \sum_{0<\gamma\le\beta,\ \gamma\in\mathbb{N}^n} \binom{\beta}{\gamma} \sum_{0\le \tilde \gamma \le \gamma, \tilde \gamma \in \mathbb{N}^n} \binom{\gamma}{\tilde \gamma} \int_{\mathbb{T}^d} :\mathbb{Y}^{\tilde\gamma}: (- \mathbb{Y}_{M}(\eta) + \sigma_{\alpha,M}^{\frac{1}{2}}\boldsymbol{b})^{\gamma - \tilde \gamma} \boldsymbol{a}_{M}^{\beta-\gamma}dx  \right] \notag \\
&\lesssim -cM^h + 1
\end{align}
for some $h>d$, then, from \eqref{dyn} and $\kappa_1=\kappa_2$,
\begin{align*}
&\lim_{L\rightarrow\infty}\lim_{N\rightarrow\infty} \mathbb{E} \left[  -\left( R_N^F(\mathbb{Y}+I(\theta^{M}))\wedge L \right) + \frac{1}{2}\int^1_0 \| P_N^{\otimes n} \theta^{M} (t)\|^2_{L^2(\mathbb{T}^d)}dt \right]\\
&\lesssim -cM^{d} + M^{\frac{\kappa_2}{\kappa_1}d} - cM^h + (\sigma_{\alpha,M})^{\frac{1}{2}} M^{\frac{r^{\max}}{\kappa_1}d} + \sigma_{\alpha, M} + M^d -cM^{\frac{2r^{\max}}{\kappa_1}d} + 1 \rightarrow -\infty
\end{align*}
as $M\rightarrow \infty$ and this proves Theorem \ref{nonA}. In the following, we prove the existence of $0<\eta<1$ and $\boldsymbol{b}\in\mathbb{R}^n$ which satisfy \eqref{pistol3}. 
From Lemma \ref{gausstoku}, there holds
\begin{align*}
&- \mathbb{E} \left[ \sum_{\beta\in  A}  c_\beta \sum_{0<\gamma\le\beta,\ \gamma\in\mathbb{N}^n} \binom{\beta}{\gamma} \sum_{0\le \tilde \gamma \le \gamma, \tilde \gamma \in \mathbb{N}^n} \binom{\gamma}{\tilde \gamma} \int_{\mathbb{T}^d} :\mathbb{Y}^{\tilde\gamma}: (- \mathbb{Y}_M(\eta) + \sigma_{\alpha,M}^{\frac{1}{2}}\boldsymbol{b})^{\gamma - \tilde \gamma} \boldsymbol{a}_M^{\beta-\gamma}dx  \right] \\
&= - \mathbb{E} \left[ \sum_{\beta\in  A}  c_\beta \sum_{0<\gamma\le\beta,\ \gamma\in\mathbb{N}^n} \binom{\beta}{\gamma} \sum_{0\le \tilde \gamma \le \gamma, \tilde \gamma \in \mathbb{N}^n} \binom{\gamma}{\tilde \gamma} \int_{\mathbb{T}^d} :\mathbb{Y}_M^{\tilde\gamma}: (- \mathbb{Y}_M(\eta) + \sigma_{\alpha,M}^{\frac{1}{2}}\boldsymbol{b})^{\gamma - \tilde \gamma} \boldsymbol{a}_M^{\beta-\gamma}dx  \right] \\
&= - \mathbb{E} \left[ \sum_{\beta\in  A}  c_\beta \sum_{0<\gamma\le\beta,\ \gamma\in\mathbb{N}^n} \binom{\beta}{\gamma} \int_{\mathbb{T}^d} H_\gamma \left(  \mathbb{Y}_M -\mathbb{Y}_M(\eta) + \sigma_{\alpha,M}^{\frac{1}{2}}\boldsymbol{b} ; \sigma_{\alpha,M}  \right)   \boldsymbol{a}_M^{\beta-\gamma}dx  \right]\\
&=  - \mathbb{E} \left[ \sum_{\beta\in  A}  c_\beta \sum_{0<\gamma\le\beta,\ \gamma\in\mathbb{N}^n} \binom{\beta}{\gamma} \int_{\mathbb{T}^d} H_\gamma \left(  \frac{\mathbb{Y}_M -\mathbb{Y}_M(\eta)}{\sigma_{\alpha,M}^{\frac{1}{2}}} + \boldsymbol{b} ; 1  \right) \sigma_{\alpha,M}^{\frac{|\gamma|}{2}} M^{\frac{d}{\kappa_1}((\beta-\gamma) \cdot \boldsymbol{r})}  \boldsymbol{a}^{\beta-\gamma}dx  \right].       
\end{align*}
Then, noting that $ \sigma_{\alpha,M}^{\frac{|\gamma|}{2}} M^{\frac{d}{\kappa_1}((\beta-\gamma) \cdot \boldsymbol{r})} \asymp M^{\frac{|\gamma|}{2}(d-2\alpha) + \frac{d }{\kappa_1} (\beta -\gamma)\cdot \boldsymbol{r}}$ from \eqref{dyn}, we define
\begin{equation}
\kappa_4 \coloneqq  \max \left\{ \frac{|\gamma|}{2}(d-2\alpha) + \frac{d }{\kappa_1} (\beta -\gamma)\cdot \boldsymbol{r} \ ;\ \beta\in\tilde A,\ \gamma\in\mathbb{N}^n,\ 0< \gamma\le\beta,\ \boldsymbol{a}^{\beta-\gamma}\neq 0  \right\}. 
\end{equation}
%and
%\begin{equation}
%S \coloneqq  \left\{ (\beta,\gamma)\in \tilde A \times \mathbb{N}^n \  ; \ 0< \beta\le \beta,\ \boldsymbol{a}^{\beta-\gamma}\neq 0,\  \frac{|\gamma|}{2}(d-2\alpha) + \frac{d + \delta}{\kappa_1} (\beta -\gamma)\cdot \boldsymbol{r}   =\kappa_3 .   \right\}
%\end{equation}
Then, by a similar argument to the last part of the proof of Theorem \ref{nornon} (ii), we can choose $\eta$ and $\boldsymbol{b}$ satisfying
\begin{align}
&- \mathbb{E} \left[ \sum_{\beta\in  A}  c_\beta \sum_{0<\gamma\le\beta,\ \gamma\in\mathbb{N}^n} \binom{\beta}{\gamma} \sum_{0\le \tilde \gamma \le \gamma, \tilde \gamma \in \mathbb{N}^n} \binom{\gamma}{\tilde \gamma} \int_{\mathbb{T}^d} :\mathbb{Y}^{\tilde\gamma}: (- \mathbb{Y}_{M}(\eta) + \sigma_{\alpha,M}^{\frac{1}{2}}\boldsymbol{b})^{\gamma - \tilde \gamma} \boldsymbol{a}_{M}^{\beta-\gamma}dx  \right] \notag \\
&\asymp - M^{\kappa_4}.\label{pistol8}
\end{align}
Moreover, from $\kappa_1 = \kappa_2$, \eqref{paka} and \eqref{ryu2}, there holds
\begin{align}
\kappa_4 &= \max \left\{ \frac{|\gamma|}{2}(d-2\alpha) + \frac{d }{\kappa_1} (\beta -\gamma)\cdot \boldsymbol{r} \ ;\ \beta\in A,\ \gamma\in\mathbb{N}^n,\ 0< \gamma\le\beta,\ \boldsymbol{a}^{\beta-\gamma}\neq 0  \right\} \notag \\
&= \max \left\{ \frac{|\xi - \beta|}{2}(d-2\alpha) + \frac{d }{\kappa_2} (\beta\cdot \boldsymbol{r}) \ ;\ \beta\in A^-,\ \xi\in  A,\ \xi > \beta,\ \boldsymbol{a}^{\beta}\neq 0  \right\} \notag \\
&=\min_{(\eta\in A^-,\ \boldsymbol{a}^\eta \neq 0)} \max_{(\beta\in A^-,\ \xi\in  A,\ \xi > \beta,\ \boldsymbol{a}^{\beta}\neq 0 )}  \left\{ \frac{|\xi - \beta|}{2}(d-2\alpha) + \frac{d}{q (\eta)(\eta \cdot \boldsymbol{r})} (\beta\cdot \boldsymbol{r}) \right\} \notag \\
&> d. \label{pistol9}
\end{align}
Therefore, from \eqref{pistol8} and \eqref{pistol9}, we can see that \eqref{pistol3} holds.
\end{proof}

\section{Normalizability 2}\label{norma3}

In this section, we prove Theorem \ref{nornon1} (i).
Similarly to the proof of Theorem \ref{norA}, it suffices to prove that
\begin{align}
&\limsup_{N\rightarrow\infty} \inf_{\theta\in\mathbb{H}} \mathbb{E} \Bigg[ -R_N^F(\mathbb{Y}+I(\theta))+ K \left| \int_{\mathbb{T}^d} \sum_{i=1}^n H_2 (P_N Y_i + P_N I_i(\theta);\sigma_{\alpha,N})dx \right|^b \notag \\
&\ \ \ \ \ \ \ \ \ \ \ \ \ \ \ \ \ \ \ \ \ \ \ \ \ \ \ \ \ \ \ \ \ \ \ \ \ \ \ \ \ \ \ \ \ \ \ \ \ \ \ \ \ \ \ \ \ \ \ \ \ \ \ \ \ \ \ \ \ \ \ \  + \frac{1}{2}\int^1_0 \| P_N^{\otimes n} \theta (t)\|^2_{L^2(\mathbb{T}^d)}dt \Bigg] >  -\infty , \label{moku2}
\end{align}
to show the normalizability, see the argument just before the proof of Theorem \ref{norA} in Section \ref{norma1}. 

\subsection{Lemmas}
Before the proof, we prepare a few lemmas.

\begin{lemm}\label{gogo}
Let $\frac{k-1}{2k}d < \alpha \le \frac{d}{2}$. For any $0<b<\frac{d}{d-2\alpha}$ and $\epsilon>0$, there exists some $c=c(b)>0$ and $C_\epsilon >0$ such that
\begin{align*}
&\left| \int_{\mathbb{T}^d} \sum_{i=1}^n \left( :Y_{i,N}^2:+2Y_{i,N}I_{i,N}(\theta) + I_{i,N}(\theta)^2  \right) dx \right|^b \ge c\| I_N (\theta) \|_{L^2(\mathbb{T}^d)}^{2b} -\epsilon \| I_N (\theta) \|_{H^\alpha(\mathbb{T}^d)}^2 - C_\epsilon Q_N,
\end{align*}
where $Q_N$ is some random variable as in the beginning of Section \ref{norma1}.
\end{lemm}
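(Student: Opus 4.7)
\medskip

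\noindent\textbf{Proof plan for Lemma \ref{gogo}.} The expression inside the absolute value is precisely the integrated Wick square of $\mathbb{Y}_N+I_N(\theta)$, which after expanding equals $R+\|I_N(\theta)\|_{L^2(\mathbb{T}^d)}^2$, where
\[
R \coloneqq \int_{\mathbb{T}^d}:|\mathbb{Y}_N|^2: dx + 2\int_{\mathbb{T}^d}\mathbb{Y}_N\cdot I_N(\theta)\,dx.
\]
Call this sum $S$. The plan is to show first that $|S|^{b}$ dominates $\|I_N(\theta)\|_{L^2}^{2b}$ modulo $|R|^{b}$, and then to absorb $|R|^{b}$ into $\epsilon\|I_N(\theta)\|_{H^\alpha}^2+C_\epsilon Q_N$ by an interpolation–Young argument whose compatibility condition is exactly $b<d/(d-2\alpha)$.

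\medskip

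For the first step I would use the elementary inequality $|x+y|^{b}\ge c_b |x|^{b}-|y|^{b}$, valid with $c_b=2^{1-b}$ for $b\ge 1$ (rearrange $|x|^b\le 2^{b-1}(|x+y|^b+|y|^b)$) and with $c_b=1$ for $0<b<1$ (subadditivity of $t\mapsto t^b$). Applied with $x=\|I_N(\theta)\|_{L^2}^2$ and $y=R$ this gives
\[
|S|^{b}\ \ge\ c_b\|I_N(\theta)\|_{L^2(\mathbb{T}^d)}^{2b}-|R|^{b}.
\]
The Wick-square contribution $\bigl|\int:|\mathbb{Y}_N|^{2}:dx\bigr|^{b}$ has bounded moments uniformly in $N$ by Gaussian hypercontractivity (Proposition \ref{gono}), hence is absorbed into $Q_N$. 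It remains to control the cross term.

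\medskip

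For the cross term, by duality (Lemma \ref{duality}),
\[
\Bigl|\int_{\mathbb{T}^d}\mathbb{Y}_N\cdot I_N(\theta)\,dx\Bigr|\ \le\ \|\mathbb{Y}_N\|_{H^{\alpha-\frac{d}{2}-\epsilon}(\mathbb{T}^d)}\,\|I_N(\theta)\|_{H^{\frac{d}{2}-\alpha+\epsilon}(\mathbb{T}^d)},
\]
and the first factor (to any power) is a $Q_N$-bounded quantity by Proposition \ref{gono}. Since $\alpha>(k-1)d/(2k)\ge d/4$ when $k\ge 2$, for $\epsilon>0$ small we have $d/2-\alpha+\epsilon<\alpha$ and so interpolation (Lemma \ref{inter}) gives
\[
\|I_N(\theta)\|_{H^{\frac{d}{2}-\alpha+\epsilon}}\ \lesssim\ \|I_N(\theta)\|_{L^2}^{1-\theta_0}\|I_N(\theta)\|_{H^\alpha}^{\theta_0},\qquad \theta_0=\tfrac{d/2-\alpha+\epsilon}{\alpha}.
\]
Raising to the $b$-th power, I would apply the three-factor Young inequality with exponents chosen so that the $\|I_N(\theta)\|_{L^2}$ factor is raised to $2b$ and the $\|I_N(\theta)\|_{H^\alpha}$ factor is raised to $2$. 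The corresponding compatibility condition reduces to $\theta_0(b-1)\le 1$, i.e.
\[
b\ \le\ 1+\tfrac{\alpha}{d/2-\alpha+\epsilon}\ \xrightarrow[\epsilon\to 0^+]{}\ \tfrac{d}{d-2\alpha},
\]
which is precisely the hypothesis $0<b<d/(d-2\alpha)$. (When $\alpha=d/2$ the right-hand side is $+\infty$ and the constraint is vacuous.) The remaining factor involving $\|\mathbb{Y}_N\|_{H^{\alpha-d/2-\epsilon}}$ is absorbed into $Q_N$, giving
\[
|R|^{b}\ \le\ \eta\,\|I_N(\theta)\|_{L^2}^{2b}+\eta'\|I_N(\theta)\|_{H^\alpha}^{2}+C_{\eta,\eta'}Q_N
\]
for any prescribed $\eta,\eta'>0$.

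\medskip

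Finally, combining the two displayed inequalities and choosing $\eta<c_b/2$ and $\eta'=\epsilon$ yields
\[
|S|^{b}\ \ge\ \tfrac{c_b}{2}\|I_N(\theta)\|_{L^2}^{2b}-\epsilon\|I_N(\theta)\|_{H^\alpha}^{2}-C_\epsilon Q_N,
\]
which is the stated bound. The only delicate point is the Young exponent bookkeeping in the last step, since the admissible range of $b$ comes out to be exactly $b<d/(d-2\alpha)$; the rest is routine interpolation and duality together with the standard Wick-power estimates.
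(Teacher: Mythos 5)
Your proposal is correct and follows essentially the same route as the paper's proof: the elementary inequality $|x+y|^b\gtrsim_b |x|^b-|y|^b$ to isolate $\|I_N(\theta)\|_{L^2}^{2b}$, absorption of the Wick-square term into $Q_N$, and control of the cross term by duality, interpolation between $L^2$ and $H^\alpha$, and Young's inequality, with the exponent bookkeeping $\theta_0(b-1)<1$ reducing exactly to $b<\frac{d}{d-2\alpha}$ as in the paper. The only cosmetic difference is that you pair the cross term in $H^{\alpha-d/2-\epsilon}\times H^{d/2-\alpha+\epsilon}$ while the paper uses the $W^{\alpha-d/2-\delta,\infty}$ norm of $Y_{1,N}$; both are standard and lead to the same Young condition.
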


\begin{proof}
For simplicity of notation, we only prove the case $n=1$.
From the elementary inequality
\begin{equation}
|x + y|^b \lesssim_b |x|^b + |y|^b,
\end{equation}
there holds
\begin{align*}
&\left| \int_{\mathbb{T}^d} \left( :Y_{1,N}^2:+2Y_{1,N}I_{1,N}(\theta) + I_{1,N}(\theta)^2  \right)dx \right|^b \\
&\ge c_1\| I_N (\theta) \|_{L^2(\mathbb{T}^d)}^{2b} - \left| \int_{\mathbb{T}^d} :Y_{1,N}^2:dx \right|^b - c_2\left| \int_{\mathbb{T}^d}  Y_{1,N}I_{1,N}(\theta)dx \right|^b 
\end{align*}
for some $c_1, c_2>0$. Moreover, from Lemmas \ref{duality}, \ref{inter} and Young's inequality,
\begin{align*}
\left| \int_{\mathbb{T}^d}  Y_{1,N}I_{1,N}(\theta)dx \right|^b &\lesssim \| Y_{1,N} \|^b_{W^{\alpha -\frac{d}{2}-\delta, \infty}(\mathbb{T}^d)} \| I_{1,N}(\theta) \|_{H^{\frac{d}{2}-\alpha + \delta}(\mathbb{T}^d)}^b \\
&\lesssim \| Y_{1,N} \|^b_{W^{\alpha -\frac{d}{2}-\epsilon, \infty}(\mathbb{T}^d)}  \| I_{1,N}(\theta) \|_{L^2(\mathbb{T}^d)}^{b\{1-\frac{1}{\alpha}(\frac{d}{2}-\alpha +\delta)\}} \| I_{1,N}(\theta) \|_{H^\alpha (\mathbb{T}^d)}^{\frac{b}{\alpha}(\frac{d}{2}-\delta +\delta)}\\
&\lesssim Q_N + \epsilon \| I_{1,N} (\theta) \|_{L^2(\mathbb{T}^d)}^{2b} + \epsilon \| I_{1,N} (\theta) \|_{H^\alpha(\mathbb{T}^d)}^2
\end{align*}
for any $\epsilon >0$. Note that if $0<b<\frac{d}{d-2\alpha}$, there holds
\[  \frac{b\{1-\frac{1}{\alpha}(\frac{d}{2}-\alpha +\delta)\}}{2b} + \frac{\frac{b}{\alpha}(\frac{d}{2}-\alpha +\delta)}{2} < 1    \]
for sufficiently small $\delta>0$.

\end{proof}

\begin{lemm}\label{kann}
Let $2\le p <4$. If $\frac{p}{\alpha} ( \frac{d}{2} - \frac{d}{p}  )<2$,
\[ \| f \|_{L^p(\mathbb{T}^d)}^p \le  \epsilon \| f \|_{H^\alpha(\mathbb{T}^d)}^2 + \epsilon \| f \|_{L^2(\mathbb{T}^d)}^{r} + C_{\epsilon, r} \]
for any $\epsilon>0$ and $r > \frac{2p\left\{ 1-\frac{1}{\alpha}(\frac{d}{2} -\frac{d}{p})\right\}}{2-\frac{p}{\alpha}( \frac{d}{2}-\frac{d}{p} )}$.
\end{lemm}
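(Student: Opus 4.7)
\medskip

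\noindent\textbf{Proof plan for Lemma \ref{kann}.}
The plan is to chain together the Sobolev embedding, the Gagliardo--Nirenberg interpolation between $L^2$ and $H^\alpha$, and two successive applications of Young's inequality. The estimate is a purely functional-analytic bookkeeping lemma; there is no deep obstacle.

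First I would apply the Sobolev embedding (Lemma \ref{embed}), which gives $\|f\|_{L^p(\mathbb{T}^d)} \lesssim \|f\|_{H^{d/2-d/p}(\mathbb{T}^d)}$. This is admissible since $p \ge 2$ guarantees $d/2 - d/p \ge 0$. Next I would interpolate the right-hand side between $L^2(\mathbb{T}^d) = H^0(\mathbb{T}^d)$ and $H^\alpha(\mathbb{T}^d)$ via Lemma \ref{inter}, with the interpolation parameter $\theta = \frac{1}{\alpha}\bigl(\frac{d}{2} - \frac{d}{p}\bigr)$. The hypothesis $\frac{p}{\alpha}\bigl(\frac{d}{2} - \frac{d}{p}\bigr) < 2$ ensures $p\theta < 2$ and in particular $\theta \in [0,1)$, so interpolation is valid, and raising to the $p$-th power yields
\[
\|f\|_{L^p(\mathbb{T}^d)}^p \lesssim \|f\|_{L^2(\mathbb{T}^d)}^{p(1-\theta)} \, \|f\|_{H^\alpha(\mathbb{T}^d)}^{p\theta}.
\]

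Since $p\theta < 2$, Young's inequality with conjugate exponents $\frac{2}{p\theta}$ and $\frac{2}{2-p\theta}$ converts the product into
\[
\|f\|_{L^p(\mathbb{T}^d)}^p \le \epsilon \|f\|_{H^\alpha(\mathbb{T}^d)}^2 + C_\epsilon \|f\|_{L^2(\mathbb{T}^d)}^{r_0},
\qquad
r_0 \coloneqq \frac{2p\bigl\{1-\tfrac{1}{\alpha}(\tfrac{d}{2}-\tfrac{d}{p})\bigr\}}{2-\tfrac{p}{\alpha}\bigl(\tfrac{d}{2}-\tfrac{d}{p}\bigr)}.
\]
This is already the desired inequality with $r = r_0$, except for the coefficient in front of $\|f\|_{L^2}^{r_0}$. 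To cover the full range $r > r_0$ claimed, I would apply Young's inequality one more time, noting that for any $r > r_0$ and any $\epsilon' > 0$ there holds $C_\epsilon x^{r_0} \le \epsilon' x^r + C_{\epsilon,\epsilon',r}$ for $x \ge 0$ (exponents $\frac{r}{r_0}$ and $\frac{r}{r-r_0}$). Relabeling $\epsilon$ and $\epsilon'$ to the common $\epsilon$ of the statement yields the conclusion.

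The only point requiring attention is the verification that the Gagliardo--Nirenberg interpolation exponent $\theta$ lies in $[0,1)$; this is automatic because $p \ge 2$ gives $\theta \ge 0$, while $p\theta < 2 \le p$ forces $\theta < 1$. No nontrivial obstruction arises.
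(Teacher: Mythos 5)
Your proposal is correct and follows essentially the same route as the paper: Sobolev embedding into $H^{d/2-d/p}$, Gagliardo--Nirenberg interpolation between $L^2$ and $H^\alpha$ with $\theta=\frac{1}{\alpha}(\frac{d}{2}-\frac{d}{p})$, and Young's inequality. The only cosmetic difference is that the paper absorbs the slack in the exponent $r$ in a single Young step with a strict inequality of exponents, whereas you split it into two Young applications; the result is the same.
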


\begin{proof}
From Lemmas \ref{embed} and \ref{inter} and Young's inequality,
\begin{align*}
\| f \|_{L^p(\mathbb{T}^d)}^p &\le C \| f \|^p_{H^{\frac{d}{2}-\frac{d}{p}}(\mathbb{T}^d)} \le C' \| f \|^{\frac{p}{\alpha}(\frac{d}{2}-\frac{d}{p})}_{H^\alpha (\mathbb{T}^d)} \| f \|^{p\{ 1- \frac{1}{\alpha}(\frac{d}{2}-\frac{d}{p})\}}_{L^2(\mathbb{T}^d)} \\
&\le \epsilon \| f \|_{H^\alpha(\mathbb{T}^d)}^2 + \epsilon\| f \|_{L^2(\mathbb{T}^d)}^{pq\{ 1- \frac{1}{\alpha}(\frac{d}{2}-\frac{d}{p})\}} + C_{\epsilon,q}
\end{align*}
if $\frac{p}{\alpha} ( \frac{d}{2} - \frac{d}{p}  )<2$, where we chose $q$ as
\[ q > \frac{1}{1-\frac{p}{2\alpha}(\frac{d}{2}-\frac{d}{p})} \iff \frac{p}{2\alpha}\left(\frac{d}{2}-\frac{d}{p}\right) + \frac{1}{q} <1.\]
\end{proof}

\begin{lemm}\label{mizu}
Let $\frac{k-1}{2k}d < \alpha \le \frac{d}{2}$. For any $\kappa<\frac{2d}{d-\alpha}$, there exists some $0<b<\frac{d}{d-2\alpha}$ such that
\[ \| f \|_{L^\kappa (\mathbb{T}^d)}^\kappa \le \epsilon \| f \|_{H^\alpha(\mathbb{T}^d)}^2 + \epsilon \| f \|_{L^2(\mathbb{T}^d)}^{2b} + C_{\epsilon} \]
for any $\epsilon>0$.
\end{lemm}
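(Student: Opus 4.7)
The plan is to reduce the claim to Lemma \ref{kann} applied with $p = \kappa$. First consider the main case $\kappa \ge 2$. I would verify the hypotheses of Lemma \ref{kann}: since $\alpha \le d/2$, one has $\frac{2d}{d-\alpha} \le 4$, so $\kappa < 4$ is immediate. Writing $A := \frac{1}{\alpha}\bigl(\frac{d}{2} - \frac{d}{\kappa}\bigr) = \frac{d(\kappa-2)}{2\alpha\kappa}$, the condition $\kappa A < 2$ simplifies to $\kappa < 2 + \frac{4\alpha}{d}$, and the elementary inequality $2\alpha(d-2\alpha) \ge 0$ yields $\frac{2d}{d-\alpha} \le 2 + \frac{4\alpha}{d}$ on $0 \le \alpha \le d/2$, so this is implied by the assumption $\kappa < \frac{2d}{d-\alpha}$.

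Lemma \ref{kann} then produces
\[ \| f \|_{L^\kappa(\mathbb{T}^d)}^\kappa \le \epsilon \| f \|_{H^\alpha(\mathbb{T}^d)}^2 + \epsilon \| f \|_{L^2(\mathbb{T}^d)}^r + C_{\epsilon, r} \]
for any $r > r_* := \frac{2\kappa(1-A)}{2 - \kappa A}$. It remains to find $b$ with $r_*/2 < b < \frac{d}{d-2\alpha}$ and then set $r = 2b$. Such a $b$ exists precisely when $r_* < \frac{2d}{d-2\alpha}$; clearing denominators, using the identity $2A\alpha = \frac{d(\kappa-2)}{\kappa}$, and cancelling yields the chain
\[ r_* < \frac{2d}{d-2\alpha} \iff \kappa(d-\alpha) < 2d \iff \kappa < \frac{2d}{d-\alpha}, \]
which is exactly the hypothesis. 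When $\alpha = d/2$ the right-hand side is $+\infty$ and any positive $b$ works, so the argument remains valid there.

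For the remaining case $\kappa < 2$, the bound is immediate: $\| f \|_{L^\kappa(\mathbb{T}^d)} \lesssim \| f \|_{L^2(\mathbb{T}^d)}$ by H\"older on the torus, and Young's inequality turns $\| f \|_{L^2}^\kappa$ into $\epsilon \| f \|_{L^2}^{2b} + C_\epsilon$ for any $b > \kappa/2$, which can again be chosen below $\frac{d}{d-2\alpha}$. The only nontrivial point of the argument is the algebraic identification of the critical exponent in the previous paragraph, which is elementary but requires careful bookkeeping of the interpolation exponents; this is really the heart of the matter since it is what forces the sharp threshold $\kappa < \frac{2d}{d-\alpha}$ to match the allowed range $b < \frac{d}{d-2\alpha}$.
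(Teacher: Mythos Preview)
Your proposal is correct and follows essentially the same approach as the paper: the paper's proof simply states that one can check the inequality $r_* < \frac{2d}{d-2\alpha}$ when $p < \frac{2d}{d-\alpha}$ and then invokes Lemma~\ref{kann}, which is exactly your argument with the algebra spelled out. Your explicit verification of the hypotheses of Lemma~\ref{kann} and the separate treatment of the trivial range $\kappa < 2$ are details the paper leaves implicit.
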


\begin{proof}
One can check that
\[ \frac{2p\left\{ 1-\frac{1}{\alpha}(\frac{d}{2} -\frac{d}{p})\right\}}{2-\frac{p}{\alpha}( \frac{d}{2}-\frac{d}{p} )} < \frac{2d}{d-2\alpha} \]
if $p<\frac{2d}{d-\alpha}$. Therefore, the desired result follows from Lemma \ref{kann}.
\end{proof}

The following lemma can be proven similarly to Lemma \ref{tuitui}.
\begin{lemm}\label{atoato}
Let $q:A^-\rightarrow \mathbb{R}_+$. Then,
\[ \sum_{i=1}^n \int_{\mathbb{T}^d} |I_{i,N}(\theta)|^{\tilde \kappa_i(q)}dx \lesssim \sum_{\beta \in A^-} \int_{\mathbb{T}^d} |I_N(\theta)^\beta|^{q(\beta)} dx + \int_{\mathbb{T}^d} |I_N(\theta)|^{\kappa}dx     \]
where $\tilde \kappa_i(q)$ is defined by \eqref{kawadou2}.
\end{lemm}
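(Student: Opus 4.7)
The plan is to mirror the proof of Lemma \ref{tuitui} almost verbatim, with the only modification coming from the extra generators $\kappa e_j$ that appear in the convex hull defining $\tilde\kappa_i(q)$. The structure is: (1) unpack $\tilde\kappa_i(q)e_i$ as a convex combination using \eqref{kawadou2}; (2) use positivity to argue that only vectors parallel to $e_i$ can contribute; (3) convert the resulting pointwise estimate on $\tilde\kappa_i(q)$ into the claimed integral inequality via the elementary bound $|x|^a\lesssim 1+|x|^b$ for $0\le a\le b$.

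First, I would fix $1\le i\le n$ and invoke the definition \eqref{kawadou2} to produce non-negative weights $\{a_l\}_{l=1}^M,\{b_m\}_{m=1}^{M'}$, multi-indices $\{\beta^l\}_{l=1}^M\subset A^-$, and coordinate directions $\{j_m\}_{m=1}^{M'}\subset\{1,\dots,n\}$ satisfying
\[
\tilde\kappa_i(q)\,e_i=\sum_{l=1}^M a_l\,q(\beta^l)\,\beta^l+\sum_{m=1}^{M'}b_m\,\kappa\,e_{j_m},\qquad \sum_{l=1}^M a_l+\sum_{m=1}^{M'}b_m\le 1.
\]
Exactly as in Lemma \ref{tuitui}, since the left-hand side is supported on the $i$-th coordinate while every summand on the right has entries in $\mathbb{R}_+$, the only way for a summand to contribute (i.e.\ to have $a_l>0$ or $b_m>0$) is to itself be supported on the $i$-th coordinate. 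Hence $\beta^l=c_l\,e_i$ for some $c_l\ge 0$ whenever $a_l>0$, and $j_m=i$ whenever $b_m>0$. Comparing $i$-th coordinates yields
\[
\tilde\kappa_i(q)=\sum_l a_l\,q(\beta^l)\,c_l+\sum_m b_m\,\kappa\le\max\!\left(\max_{1\le l\le M}q(\beta^l)c_l,\,\kappa\right),
\]
say realised by the index $L$.

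Finally, applying $|x|^a\lesssim 1+|x|^b$ for $0\le a\le b$ on the right-hand side gives
\[
\int_{\mathbb{T}^d}|I_{i,N}(\theta)|^{\tilde\kappa_i(q)}\,dx\lesssim 1+\int_{\mathbb{T}^d}|I_{i,N}(\theta)|^{q(\beta^L)c_L}\,dx+\int_{\mathbb{T}^d}|I_{i,N}(\theta)|^{\kappa}\,dx.
\]
Since $\beta^L=c_L e_i$, we identify $|I_{i,N}(\theta)|^{q(\beta^L)c_L}=|I_N(\theta)^{\beta^L}|^{q(\beta^L)}$, and bounding $|I_{i,N}(\theta)|\le |I_N(\theta)|$ pointwise on the last term allows us to sum over $i$ and arrive at the stated inequality.

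I do not expect a real obstacle: the only step worth stating carefully is the positivity argument in the second paragraph, which is the same trick used in Lemma \ref{tuitui}. The presence of the extra generators $\kappa e_j$ is completely benign because they are themselves coordinate vectors, so the same dimension-by-dimension selection argument applies without modification.
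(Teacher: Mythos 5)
Your proof is correct and is exactly the argument the paper intends: the paper gives no separate proof of Lemma \ref{atoato}, stating only that it follows as in Lemma \ref{tuitui}, and your adaptation (the coordinate-positivity selection in the convex hull, now allowing the extra generators $\kappa e_j$, followed by $|x|^a\lesssim 1+|x|^b$) is precisely that adaptation. The additive constant $1$ is absorbed on the right-hand side because $\boldsymbol{0}\in A^-$, just as in the paper's own proof of Lemma \ref{tuitui}.
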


\subsection{Proof of Theorem \ref{nornon1} (i)}

\begin{proof}[Proof of Theorem \ref{nornon1} \textup{(i)}]
Let $\frac{k-1}{2k}d < \alpha \le \frac{d}{2}$. We assume that $F$ satisfies the assumption of Theorem \ref{nornon1} (i).
From \eqref{rndef} and \eqref{hersei},
\begin{align}
-R_N^F(\mathbb{Y}+ I(\theta)) &= -\sum_{\beta \in A} c_\beta \int_{\mathbb{T}^d} H_\beta (P_N^{\otimes n}(\mathbb{Y}+ I(\theta));\sigma_{\alpha, N}) dx \notag\\
&= -\int_{\mathbb{T}^d}  F(P_N^{\otimes n}I(\theta)) dx - \sum_{\beta \in  A} c_\beta \sum_{0<\gamma\leq \beta,\ \gamma\in\mathbb{N}^n} \binom{\beta}{\gamma}\int_{\mathbb{T}^d} :\mathbb{Y}_N^\gamma: I_N(\theta)^{\beta - \gamma} dx  \notag\\
&\eqqcolon A_N^1 + A_N^2. \label{pr1g}
\end{align}
From \eqref{lalalag},
\begin{equation}\label{new1g}
A_N^1 \gtrsim \sum_{\beta \in A^-}\int_{\mathbb{T}^d} |I_N(\theta)^{\beta}|^{q(\beta)} dx - C\int_{\mathbb{T}^d} |I_{N}(\theta)|^{\kappa}dx - C 
\end{equation}
for some $C>0$, $\kappa <\frac{2d}{d-\alpha}$ and $q: A^- \rightarrow \mathbb{R}_+$ with $q>\tilde p^{\alpha,F,q}$. Therefore, from \eqref{new1g}, \eqref{new3} and Lemma \ref{gogo}, we obtain 
\begin{align}
&A_N^1+ K \left| \int_{\mathbb{T}^d} \sum_{i=1}^n H_2 (P_N Y_i + P_N I_i(\theta);\sigma_{\alpha,N})dx \right|^b +  \frac{1}{2}\int^1_0 \| P_N^{\otimes n}\theta (t)\|^2_{L^2(\mathbb{T}^d)}dt \notag \\
&\gtrsim \sum_{\beta\in A^-}\int_{\mathbb{T}^d}|I_N(\theta)^\beta |^{q(\beta)} dx + \| I_N(\theta) \|_{L^2(\mathbb{T}^d)}^{2b}- C\int_{\mathbb{T}^d} |I_{N}(\theta)|^{\kappa}dx + \| I_N(\theta) \|^2_{{H}^{\alpha}(\mathbb{T}^d)}  - Q_N   \label{pr2g}
\end{align}
On the other hand, for each term of $A_N^2$, by proceeding exactly in the same way as in the proof of Theorem \ref{norA}, we obtain
\begin{align}
&\left|\int_{\mathbb{T}^d} :\mathbb{Y}_N^\gamma: I_N(\theta)^{\beta - \gamma} dx \right| \notag \\
&\leq \epsilon ' \| I_N(\theta)^{\beta -\gamma}\|_{L^{q(\beta -\gamma)}(\mathbb{T}^d)}^{q(\beta -\gamma)} + \epsilon '  \| I_{N}(\theta)\|^2_{H^\alpha(\mathbb{T}^d)} + \epsilon ' \sum_{i=1}^n \| I_{i,N}(\theta)\|_{L^{\tilde \kappa_i(q)}(\mathbb{T}^d)}^{\tilde \kappa_i(q)} + C_{\epsilon '}Q_N\label{simeg}
\end{align}
for any $\epsilon '>0$.
Then, from \eqref{pr1g}, \eqref{pr2g}, \eqref{simeg}, Lemma \ref{mizu} and Lemma \ref{atoato}, we get
\begin{align*}
&\mathbb{E} \left[ -R_N^F(\mathbb{Y}+I(\theta)) + K \left| \int_{\mathbb{T}^d} \sum_{i=1}^n H_2 (P_N Y_i + P_N I_i(\theta);\sigma_N) \right|^b + \frac{1}{2}\int^1_0 \| P_N^{\otimes n} \theta (t)\|^2_{L^2(\mathbb{T}^d)}dt \right]\\
&\gtrsim \mathbb{E} \left[ \sum_{\beta \in A^-} \int_{\mathbb{T}^d} |I_N(\theta)^\beta|^{q(\beta)} dx  + \| I_N(\theta) \|_{L^2(\mathbb{T}^d)}^{2b}- C\int_{\mathbb{T}^d} |I_{N}(\theta)|^{\kappa}dx  +   \| I_N(\theta) \|^2_{H^{\alpha}(\mathbb{T}^d)} \right. \notag \\
&\ \left.\ \ \ \ \ \ \ \ \ \ \ \ \ \ \ \ \ \ \ \ \ \ \ \ \ \ \ \ \ \ \ \ \ \ \ \ \ \ \ \ \ \ \ \ \ \ \ \ \ \ \ \ \ \ \ \ \ \ \ \ \ \ \ \ \ \ \ \ \ \ \ \ \ \ \ \ \  - \epsilon ' \sum_{i=1}^n \int_{\mathbb{T}^d} |I_{i,N}(\theta)|^{\tilde \kappa_i(q)}dx \right]  - C \notag \\
&\gtrsim \mathbb{E} \left[ \sum_{\beta \in A^-} \int_{\mathbb{T}^d} |I_N(\theta)^\beta|^{q(\beta)} dx + \| I_N(\theta) \|_{L^2(\mathbb{T}^d)}^{2b} +   \| I_N(\theta) \|^2_{H^{\alpha}(\mathbb{T}^d)}    \right] - C
\end{align*}
by choosing sufficiently small $\epsilon ' >0$ and sufficiently large $b$ so that
\[  2b > \left( \frac{2\kappa\left\{ 1-\frac{1}{\alpha}(\frac{d}{2} -\frac{d}{\kappa})\right\}}{2-\frac{\kappa}{\alpha}( \frac{d}{2}-\frac{d}{\kappa} )}  \right).     \]
Then, \eqref{moku2} follows immediately from this.
\end{proof}

\section{Non-normalizability 2}\label{norma4}
In this section, we prove Theorems  \ref{nono} (ii) and \ref{nornon1} (ii).
Similarly to the proof of Theorems \ref{nonA} and \ref{nornon} (ii), it suffices to prove
\begin{align}
&\limsup_{L\rightarrow\infty}\limsup_{N\rightarrow\infty} \inf_{\theta\in\mathbb{H}} \mathbb{E} \Bigg[ -\min \left( R_N^F(\mathbb{Y}+I(\theta)), L \right) +  K\left| \int_{\mathbb{T}^d} \sum_{i=1}^n H_2 (P_N Y_i + P_N I_i(\theta);\sigma_{\alpha,N})    \right|^b  \notag \\
& \ \ \ \ \ \ \ \ \ \ \ \ \ \ \ \ \ \ \ \ \ \ \ \ \ \ \ \ \ \ \ \ \ \ \ \ \ \ \ \ \ \ \ \ \ \ \ \ \ \ \ \ \ \ \ \ \ \ \ \ \ \ \ \ \ \ \ \ \ \ \   + \frac{1}{2}\int^1_0 \| P_N^{\otimes n} \theta (t)\|^2_{L^2(\mathbb{T}^d)}dt \Bigg] = -\infty , \label{garagara}
\end{align}
see the argument in the beginning of Section \ref{norma2}.
The strategy for the proof is basically the same as in Section \ref{norma2}, but we need to use another technique used in \cite{hartree, phi3, heat}: Let $f:\mathbb{R}^d\rightarrow \mathbb{R}$ be a Schwartz function (rapidly decreasing function) such that $f\not\equiv 0$, $\hat f$ is an even function and $\mbox{supp}(\hat f) \subset \{ |\xi|\le 1\}$ where $\mbox{supp}(\hat f)$ is the support of $f$. For $M\in \mathbb{N}$ and $r\in\mathbb{R}_+\backslash \{0\}$, we define $f_M:\mathbb{T}^d \rightarrow \mathbb{R}$ by
\begin{equation}
f_M (x) \coloneqq M^{\frac{d}{r}-d}\sum_{|l|\le M}\hat f \left( \frac{l}{M}\right)e_l (x).
\end{equation}
Then, the following lemma holds. The proof is similar to that of \cite[Lemma 5.12]{hartree}.
\begin{lemm}\label{haki}
For any $s\in \mathbb{R}_+\backslash \{0\}$, there holds
\[ \int_{\mathbb{T}^d} |f_M(x)|^{sr}dx \asymp M^{sd-d} \]
as $M \rightarrow \infty$. Moreover, if $sr\in\mathbb{N}$, there holds
\[\int_{\mathbb{T}^d}|\langle \nabla \rangle^\beta (f_M(x)^{sr})|^2 dx \lesssim M^{2\beta + 2sd -d} \]
for any $\beta\in\mathbb{R}$.
\end{lemm}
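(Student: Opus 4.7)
The plan is to exploit that $f_M$ is, up to rapidly decaying errors, a rescaled version of the Schwartz function $f$ concentrating at one point of the fundamental domain of $\mathbb{T}^d$. Since $\hat f$ is supported in $\{|\xi|\le 1\}$, the cutoff $|l|\le M$ in the definition of $f_M$ is automatic, so $f_M$ is the full Fourier series of a sampled band-limited function. Applying Poisson summation therefore yields, for a constant $c_d$ depending only on the Fourier convention used on $\mathbb{T}^d$, an identity of the form
\[
f_M(x) \;=\; c_d\, M^{d/r}\sum_{k\in\mathbb{Z}^d} f(M(x_k - x)),
\]
where $x_0 = 0$ and $|x_k - x|\gtrsim 1$ for $k\ne 0$ whenever $x$ lies in the fundamental domain of $\mathbb{T}^d$. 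The Schwartz decay of $f$ then makes the $k\ne 0$ terms $O(M^{d/r - N})$ for any $N\in\mathbb{N}$, so $f_M(x) = c_d M^{d/r} f(-Mx) + O(M^{d/r - N})$ uniformly in $x$.

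Inserting this representation into $\int_{\mathbb{T}^d}|f_M|^{sr}dx$ and changing variables $y=-Mx$ in the main term, I would obtain
\[
\int_{\mathbb{T}^d}|f_M(x)|^{sr}dx \;=\; |c_d|^{sr}\, M^{sd-d}\int_{|y|\lesssim M}|f(y)|^{sr}dy \;+\; O(M^{sd-d-1}),
\]
and the inner integral converges as $M\to\infty$ to $\int_{\mathbb{R}^d}|f|^{sr}dy$, which is a positive finite constant because $f$ is a nonzero continuous Schwartz function. This gives both upper and lower bounds with the asymptotic $M^{sd-d}$.

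For the second claim I would use a Bernstein-type argument. Because $\widehat{f_M}(l) = M^{d/r-d}\hat f(l/M)$ is supported in $\{|l|\le M\}$, the $sr$-fold convolution $\widehat{f_M^{sr}}$ (well defined on $\mathbb{Z}^d$ precisely because $sr\in\mathbb{N}$) is supported in $\{|l|\le srM\}$. For $\beta\ge 0$ one has $(1+|l|^2)^\beta \lesssim M^{2\beta}$ on that set, so
\[
\|\langle\nabla\rangle^\beta f_M^{sr}\|_{L^2(\mathbb{T}^d)}^2 \;\lesssim\; M^{2\beta}\,\|f_M^{sr}\|_{L^2(\mathbb{T}^d)}^2 \;=\; M^{2\beta}\int_{\mathbb{T}^d}|f_M|^{2sr}dx \;\asymp\; M^{2\beta+2sd-d}
\]
by the first claim applied with $s$ replaced by $2s$. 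For negative $\beta$, one refines by writing $\widehat{f_M^{sr}}(l) = c_d^{sr} M^{sd-d}\hat G(l/M) + O(M^{-\infty})$ with $G=f^{sr}$ Schwartz and $\hat G$ supported in $\{|\xi|\le sr\}$, and then comparing the weighted discrete sum to the corresponding rescaled integral.

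The main obstacle is the clean handling of the periodization tails. The upper bound on $\int|f_M|^{sr}dx$ is elementary from Schwartz decay, but the matching lower bound requires absorbing the $O(M^{-N})$ error into the dominant term $c_d M^{d/r}f(-Mx)$; this is where the choice of $f$ as a nonzero Schwartz function (so that $\int|f|^{sr}dy>0$) enters essentially. Once the Poisson representation is secured, everything else is routine rescaling and Bernstein manipulation.
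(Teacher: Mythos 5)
Your proposal is correct and follows essentially the same route as the paper: Poisson summation identifies $f_M$ with the periodization of $M^{d/r}f(M\cdot)$, Schwartz decay kills the nonzero lattice translates, rescaling gives the $M^{sd-d}$ asymptotics, and the derivative bound is the Bernstein estimate $\|\langle\nabla\rangle^\beta f_M^{sr}\|_{L^2}^2\lesssim M^{2\beta}\|f_M^{sr}\|_{L^2}^2$ combined with the first claim at exponent $2s$. Your remark that the Bernstein step only covers $\beta\ge 0$ is a fair observation (the paper's proof applies the same inequality without comment), but since the lemma is only invoked with positive $\beta$ in the paper, this does not affect anything downstream.
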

\begin{proof}
Writing $F_M(x)\coloneqq M^{\frac{d}{r}}f(Mx)$, there holds
\[ \widehat{F_M(x+\cdot)}(l) = M^{\frac{d}{r}-d}\hat f \left( \frac{l}{M}\right)e_l (x). \]
Therefore, from Poisson's summation formula,
\begin{equation}
\sum_{l\in\mathbb{Z}^d}F_M (x+l) = f_M (x).
\end{equation}
Because $f$ is a Schwartz function, for $|x|\le 1$ and $l\in \mathbb{Z}^d\backslash \{ 0\}$, 
\begin{equation}
|F_M(x+l)| = M^{\frac{d}{r}} |f(Mx+Ml)| \lesssim \frac{M^{\frac{d}{r}}}{M^\alpha |l|^\alpha}
\end{equation}
for any $\alpha >0$.
Therefore, for any $s\in \mathbb{R}_+\backslash \{0\}$ and $|x|\le 1$,
\begin{align}
|f_M(x)|^{sr} &= \Bigg| F_M(x) + \sum_{l\in\mathbb{Z}^d\backslash \{0\}}F_M(x+l)    \Bigg|^{sr}\notag\\
&\lesssim |F_M(x)|^{sr} + \Bigg| \sum_{l\in\mathbb{Z}^d\backslash \{0\}}F_M(x+l)    \Bigg|^{sr}\notag\\
&\lesssim |F_M(x)|^{sr} + M^{-\beta}\label{metago1}
\end{align}
and
\begin{align}
|f_M(x)|^{sr} &\gtrsim |F_M(x)|^{sr} - C\Bigg| \sum_{l\in\mathbb{Z}^d\backslash \{0\}}F_M(x+l)    \Bigg|^{sr} \gtrsim |F_M(x)|^{sr} - C' M^{-\beta}\label{metag2}
\end{align}
for any $\beta >0$.
Moreover, 
\begin{align}
\int_{|x|\le 1}|F_M(x)|^{sr} dx = M^{sd}\int_{|x|\le 1} |f(Mx)|^{sr} dx =  M^{sd-d}\int_{|x|\le M} |f(x)|^{sr} dx \asymp M^{sd-d}\label{metago3}
\end{align}
as $M\rightarrow \infty$.
Therefore, from \eqref{metago1}, \eqref{metag2} and \eqref{metago3}, we obtain
\begin{equation}\label{siage}
\int_{\mathbb{T}^d} |f_M(x)|^{sr}dx = \int_{|x|\le 1} |f_M(x)|^{sr} \asymp M^{sd -d}    .
\end{equation}
Moreover, from $\mbox{supp}(\hat f_M) \subset \{ |l|\le M\}$ and \eqref{siage}, 
\[ \int_{\mathbb{T}^d}|\langle \nabla \rangle^\beta (f_M(x)^{sr})|^2 dx \lesssim M^{2\beta} \| f_M^{sr}\|^2_{L^2(\mathbb{T}^d)} \lesssim M^{2\beta}M^{2sd-d} \]
if $sr\in\mathbb{N}$.

\end{proof}

\subsection{Proof of Theorem \ref{nono} (ii)}

\begin{proof}[Proof of Theorem \ref{nono} \textup{(ii)}]
Let $\alpha = \frac{d}{2}$. We assume that there exist some $\boldsymbol{a}=(a_1,\cdots,a_n)\in\mathbb{R}^n$, $\boldsymbol{r}= (r_1,\cdots,r_n)\in\mathbb{N}^n$ and $C_1 >0$ such that 
\begin{align}\label{sweat}
\lim_{x\rightarrow\infty} \left\{  F (\boldsymbol{a}_{\boldsymbol{r}}(x)) + C_1\sum_{\beta\in A^-}|\boldsymbol{a}_{\boldsymbol{r}}(x)^\beta| -C_2|\boldsymbol{a}_{\boldsymbol{r}}(x)|^{\kappa} \right\} = +\infty
\end{align}
for any $\kappa <4$ and $C_2>0$, where $\boldsymbol{a}_{\boldsymbol{r}}(x) \coloneqq (a_1 x^{r_1}, \cdots, a_n x^{r_n})$.
We set $\kappa_1$ and $\kappa_2$ by 
\begin{equation}\label{kamenn1}
F (\boldsymbol{a}_{\boldsymbol{r}}(x)) + C_1\sum_{\beta\in A^-}|\boldsymbol{a}_{\boldsymbol{r}}(x)^\beta|  \asymp x^{\kappa_1} \ \ \ \mbox{as}\ x\rightarrow \infty
\end{equation}
and
\begin{equation}\label{kamenn2}
\kappa_2 \coloneqq \max_{\beta\in A^-,\ \boldsymbol{a}^\beta \neq 0}  \beta \cdot \boldsymbol{r}.
\end{equation}
Then, from \eqref{sweat}, $\kappa_1$ must satisfy
\begin{equation}\label{toumoro}
\kappa_1 \ge \max_{1\le i \le n,\ a_i\neq 0} 4r_i \eqqcolon 4r^{\max} >0.
\end{equation}
Given $M\in\mathbb{N}$, we define $\theta^M=(\theta^M_1,\cdots,\theta^M_n)\in\mathbb{H}$ by
\begin{equation}
\theta^M(t) \coloneqq -\langle \nabla \rangle^{\frac{d}{2}} \boldsymbol{Z}_{M}^\eta (t) + \langle \nabla \rangle^{\frac{d}{2}}\boldsymbol{a}_{\boldsymbol{r}}\left(  (\log M)g_M \right) + \sigma_{\alpha,M}^{\frac{1}{2}}\boldsymbol{b} 
\end{equation}
where $\boldsymbol{Z}_{M}^\eta$ is the same one as in the proof of Theorem \ref{nornon} (ii) and $g_M$ is a deterministic function with $\hat g_M (l) = 0$ for $|l|>M$ satisfying
\begin{align}\label{yayaya}
\int_{\mathbb{T}^d} g_M^{2r^{\max}} dx \asymp 1
\end{align}
and
\begin{align}
\int_{\mathbb{T}^d} g_M^{4r^{\max}} dx \asymp M^d
\end{align}
as $M\rightarrow \infty$. Such $g_M$ indeed exists in view of Lemma \ref{haki}. Similarly to the proof of Theorems \ref{nornon} (ii) and \ref{nonA}, $0<\eta<1$ and $\boldsymbol{b} = (b_1,\cdots,b_n)\in\mathbb{R}^n$ will be determined later. Note that there holds
\[ I (\theta^M) = -\mathbb{Y}_{M}(\eta)  + \boldsymbol{a}_{\boldsymbol{r}}\left(  (\log M)g_M \right) + \sigma_{\alpha,M}^{\frac{1}{2}}\boldsymbol{b}.\]
We write $\boldsymbol{a}_M^g \coloneqq \boldsymbol{a}_{\boldsymbol{r}}\left(  (\log M)g_M \right) = (a_1(\log M)^{r_1}g_M^{r_1},\cdots,a_n(\log M)^{r_n}g_M^{r_n})$. 
Then, for $N>M$,
\begin{align}
-R_N^F(\mathbb{Y}+I(\theta^M)) &= -R_N^F\left(\mathbb{Y}-\mathbb{Y}_M (\eta)+\boldsymbol{a}_M^g + \sigma_{\alpha, M}^{\frac{1}{2}}\boldsymbol{b}\right)\notag \\
&= -\int_{\mathbb{T}^d} \left\{ F (\boldsymbol{a}_M) + C_1\sum_{\beta\in A^-}|(\boldsymbol{a}_M^g)^\beta| \right\}dx + \int_{\mathbb{T}^d} C_1\sum_{\beta\in A^-}|(\boldsymbol{a}_M^g)^\beta| dx \notag\\
&\ -\sum_{\beta\in A} c_\beta \sum_{0<\gamma\le\beta,\gamma\in\mathbb{N}^n} \binom{\beta}{\gamma} \int_{\mathbb{T}^d} H_\gamma (\mathbb{Y}_N - \mathbb{Y}_M(\eta) + \sigma_{\alpha,M}^{\frac{1}{2}}\boldsymbol{b};\sigma_{\alpha,N}) (\boldsymbol{a}_M^g)^{\beta-\gamma}dx. \label{messig}
\end{align}
From the almost same argument as the proof of Theorems \ref{nornon} (ii) and \ref{nonA}, we obtain
\begin{align}
&\lim_{L\rightarrow\infty}\lim_{N\rightarrow\infty}\mathbb{E} \left[ -\left( R_N^F(\mathbb{Y}+I(\theta^M))\wedge L \right) + K\left| \int_{\mathbb{T}^d} \sum_{i=1}^n H_2 (P_N Y_i + P_N I_i(\theta^M);\sigma_{\alpha,N})    \right|^b \right. \notag \\
&\left.\ \ \ \ \ \ \ \ \ \ \ \ \ \ \ \ \ \ \ \ \ \ \ \ \ \ \ \ \ \ \ \ \ \ \ \ \ \ \ \ \ \ \ \ \ \ \ \ \ \ \ \ \ \   \ \ \ \ \ \ \ \ \ \ \ \ \ \ \ \ \ \ \ \ \ \ \ \ \ \ \ \ \ \ \ \  + \frac{1}{2}\int^1_0 \| P_N^{\otimes n} \theta (t)\|^2_{L^2(\mathbb{T}^d)}dt  \right]\notag\\
&= -\int_{\mathbb{T}^d} \left\{ F (\boldsymbol{a}_M) + C_1\sum_{\beta\in A^-}|(\boldsymbol{a}_M^g)^\beta| \right\}dx + \int_{\mathbb{T}^d} C_1\sum_{\beta\in A^-}|(\boldsymbol{a}_M^g)^\beta| dx        \notag\\
&\ - \mathbb{E} \left[ \sum_{\beta\in  A}  c_\beta \sum_{0<\gamma\le\beta,\ \gamma\in\mathbb{N}^n} \binom{\beta}{\gamma} \sum_{0\le \tilde \gamma \le \gamma, \tilde \gamma \in \mathbb{N}^n} \binom{\gamma}{\tilde \gamma} \int_{\mathbb{T}^d} :\mathbb{Y}^{\tilde\gamma}: (- \mathbb{Y}_M(\eta) + \sigma_{\alpha,M}^{\frac{1}{2}}\boldsymbol{b})^{\gamma - \tilde \gamma}( \boldsymbol{a}_M^g)^{\beta-\gamma}dx  \right]\notag \\
&\ +\frac{1}{2} \mathbb{E} \left[  \int_{\mathbb{T}^d} \left\{ |\langle \nabla \rangle^{\frac{d}{2}} \boldsymbol{a}_M^g|^2 + \sigma_{\alpha, M}^{\frac{1}{2}}(\langle \nabla \rangle^{\frac{d}{2}} \boldsymbol{a}_M^g) \cdot\boldsymbol{b} + \sigma_{\alpha, M}|\boldsymbol{b}|^2 +  \int^1_0 \left|\langle \nabla \rangle^{\frac{d}{2}}\boldsymbol{Z}_{M}^\eta(t)\right|^2 dt     \right\} dx\right] \notag \\
&\ + \mathbb{E} \left[ K\left| \sum^n_{i=1} \int_{\mathbb{T}^d} \left\{ :Y_{i}^2: + 2Y_{i} (-Y_{i,M}(\eta) + a_i (\log M)^{r_i}g_M^{r_i} + \sigma_{\alpha,M}^{\frac{1}{2}}b_i)\right.\right.\right.\notag \\
&\ \ \ \ \ \ \ \ \ \ \ \ \ \ \ \ \ \ \ \ \ \ \ \ \ \ \ \ \ \ \ \ \ \ \ \ \ \ \ \ \ \ \ \ \ \ \ \   \ \ \ \ \ \ \ \ \ \ \          \left.\left.\left. + (-Y_{i,M}(\eta) + a_i (\log M)^{r_i}g_M^{r_i} + \sigma_{\alpha, M}^{\frac{1}{2}}b_i)^2   \right\} dx    \right|^b    \right] \notag \\
&\eqqcolon A_M^1 + A_M^2 + A_M^3 + A_M^4 + A_M^5. \label{kurai}
\end{align}
Then, from Lemma \ref{hq3}, \eqref{yayaya} and $\sigma_{\alpha,M}\asymp \log M$ as in \eqref{dyn},
\begin{equation}\label{owa1}
|A_M^4| + |A_M^5| \lesssim (\log M)^{2r^{\max}}M^d.
\end{equation}
Moreover, from \eqref{kamenn1} and \eqref{kamenn2},
\begin{equation}\label{owa2}
A_M^1 = -\int_{\mathbb{T}^d}\left\{ F (\boldsymbol{a}_M) + C_1\sum_{\beta\in A^-}|(\boldsymbol{a}_M^g)^\beta| \right\}  dx \asymp - (\log M)^{\kappa_1} \int_{\mathbb{T}^d} |g_M|^{\kappa_1} dx 
\end{equation}
and
\begin{equation}\label{owa3}
|A_M^2| \lesssim (\log M)^{\kappa_2} \int_{\mathbb{T}^d} |g_M|^{\kappa_2} dx
\end{equation}
as $M\rightarrow \infty$.
Note that because $\kappa_1 \ge 4 r^{\max}$ as mentioned in \eqref{toumoro}, there holds 
\begin{equation}\label{owa4}
\int_{\mathbb{T}^d} |g_M|^{\kappa_1} dx \gtrsim M^d.
\end{equation}
For the third term of the right hand side of \eqref{kurai}, by a similar argument to the last parts of the proofs of Theorems \ref{nornon} (ii) and \ref{nonA}, we can choose $0<\eta<1$ and $\boldsymbol{b}\in\mathbb{R}^n$ such that
\begin{align}\label{owa5}
A_{M}^3 \asymp - (\log M)^h (\log M)^{\kappa_2} \int_{\mathbb{T}^d} |g_{M}|^{\kappa_2} dx = -(\log M)^{h +\kappa_2} \int_{\mathbb{T}^d} |g_{M}|^{\kappa_2} dx
\end{align}
as $M\rightarrow \infty$ for some $h>0$.
Therefore, from \eqref{kurai}, \eqref{owa1}, \eqref{owa2}, \eqref{owa3} and \eqref{owa5}, 
\begin{align}
&\lim_{L\rightarrow\infty}\lim_{N\rightarrow\infty}\mathbb{E} \left[ -\left( R_N^F(\mathbb{Y}+I(\theta^{M}))\wedge L \right) + K\left| \int_{\mathbb{T}^d} \sum_{i=1}^n H_2 (P_N Y_i + P_N I_i(\theta^{M});\sigma_{\alpha,N})    \right|^b  \right. \notag \\
&\left.\ \ \ \ \ \ \ \ \ \ \ \ \ \ \  \ \ \ \ \ \ \ \ \ \ \ \ \ \ \  \ \ \ \ \ \ \ \ \ \ \ \ \ \ \ \ \ \  \ \ \ \ \ \ \ \ \ \ \ \ \ \ \ \ \ \ \ \ \ \ \ \ \ \ \ \ \ \ \ \ \ \ \ \ \  + \frac{1}{2}\int^1_0 \| P_N^{\otimes n} \theta^{M} (t)\|^2_{L^2(\mathbb{T}^d)}dt  \right]\notag \\ 
&\lesssim -c(\log M)^{\kappa_1} \int_{\mathbb{T}^d} |g_{M}|^{\kappa_1} dx + (\log M)^{\kappa_2} \int_{\mathbb{T}^d} |g_{M}|^{\kappa_2} dx - c(\log M)^{h + \kappa_2} \int_{\mathbb{T}^d} |g_{M}|^{\kappa_2} dx \notag \\
&\ \ \ \ \ \ \ \ \ \ \ \ \ \ \ \ \ \  \ \ \ \ \ \ \ \ \ \ \ \ \ \ \ \ \ \ \ \ \ \ \ \ \ \ \ \ \ \ \ \ \  \ \ \ \ \ \ \ \ \ \ \ \ \ \ \ \ \ \ \ \ \ \ \ \ \ \ \ \ \ \ \ \ \ \ \ \ \ \  + (\log M)^{2r^{\max}}M^d + 1 \label{kimono}
\end{align} 
And from \eqref{owa4} and $\kappa_1 \ge 4 r^{\max}$, we can see that the right hand side of \eqref{kimono} goes to $-\infty$ as $M\rightarrow \infty$.
This proves \eqref{garagara}.
\end{proof}

\subsection{Proof of Theorem \ref{nornon1} (ii)}

\begin{proof}[Proof of Theorem \ref{nornon1} \textup{(ii)}]
Let $\frac{k-1}{2k}d<\alpha<\frac{d}{2}$. We assume that there exist some $q:A^- \rightarrow \mathbb{R}_+$, $\boldsymbol{a}=(a_1,\cdots,a_n)\in\mathbb{R}^n$ and $\boldsymbol{r}= (r_1,\cdots,r_n)\in\mathbb{N}^n$ satisfying \eqref{paka2} such that 
\begin{align}
\lim_{x\rightarrow\infty} \left\{  F (\boldsymbol{a}_{\boldsymbol{r}}(x)) + \sum_{\beta\in A^-}|\boldsymbol{a}_{\boldsymbol{r}}(x)^\beta|^{q(\beta)} -C|\boldsymbol{a}_{\boldsymbol{r}}(x)|^{\frac{2d}{d-\alpha}} \right\} = +\infty
\end{align}
for any $C>0$, where $\boldsymbol{a}_{\boldsymbol{r}}(x) \coloneqq (a_1 x^{r_1}, \cdots, a_n x^{r_n})$.
We set $\kappa_1$ and $\kappa_2$ by 
\begin{equation}\label{shin}
F (\boldsymbol{a}_{\boldsymbol{r}}(x)) + \sum_{\beta\in A^-}|\boldsymbol{a}_{\boldsymbol{r}}(x)^\beta|^{q(\beta)}  \asymp x^{\kappa_1} \ \ \ \mbox{as}\ x\rightarrow \infty
\end{equation}
and
\begin{equation}\label{shin2}
\kappa_2 \coloneqq \max_{\beta\in A^-,\ \boldsymbol{a}^\beta \neq 0} q(\beta)( \beta \cdot \boldsymbol{r}).
\end{equation}
Then, $\kappa_1$ must satisfy
\begin{equation}\label{must}
\kappa_1 > \max_{1\le i\le n,\ a_i\neq 0} \frac{2d}{d-\alpha}r_i \eqqcolon \frac{2d}{d-\alpha}r^{\max} >0
\end{equation}
and $\kappa_1 \ge \kappa_2$.
We divide the proof into two parts: The cases $\kappa_1 > \kappa_2$ and $\kappa_1 = \kappa_2$.\\
\textbf{(In the case that $\boldsymbol{\kappa_1 > \kappa_2}$)}
Given $M\in\mathbb{N}$, we define $\theta^M=(\theta^M_1,\cdots,\theta^M_n)\in\mathbb{H}$ by
\[ \theta^M (t) \coloneqq - \langle \nabla \rangle^{\alpha} \boldsymbol{Z}_{M}^\eta (t) + \langle \nabla \rangle^{\alpha} \boldsymbol{a}_{\boldsymbol{r}}(g_M) + \sigma_{\alpha,M}^{\frac{1}{2}}\boldsymbol{b} \]
where $g_M$ is a deterministic function satisfying $\mbox{supp}(\hat g_M) \subset \{ |l|\le M\}$,
\begin{equation} 
\int_{\mathbb{T}^d} |g_M(x)|^{\frac{d}{d-\alpha}r^{\max}}dx \sim 1 
\end{equation}
and
\begin{equation}\label{asita}
\int_{\mathbb{T}^d} |g_M(x)|^{\frac{2d}{d-\alpha}r^{\max}}dx \sim M^d .
\end{equation}
See the proofs of the previous theorems for the definitions of the notations other than $g_M$. We write $\boldsymbol{a}_M^g \coloneqq \boldsymbol{a}_{\boldsymbol{r}}(g_M)$.
Then, similarly to the proof of Theorem \ref{nono} (ii), we  obtain
\begin{align}
&\lim_{L\rightarrow\infty}\lim_{N\rightarrow\infty}\mathbb{E} \Bigg[ -\left( R_N^F(\mathbb{Y}+I(\theta^M))\wedge L \right) + K\left| \int_{\mathbb{T}^d} \sum_{i=1}^n H_2 (P_N Y_i + P_N I_i(\theta^M);\sigma_{\alpha,N})dx    \right|^b \notag \\
&\ \ \ \  \ \ \ \ \ \ \ \ \ \ \ \ \ \ \ \ \  \ \ \ \ \ \ \ \ \ \ \ \ \ \ \ \ \ \ \ \ \ \ \ \ \ \ \  \ \ \ \ \ \ \ \ \ \ \ \ \ \ \ \ \ \ \ \ \ \ \ \ \ \ \ \ \ \ \ \ \ \ \ \ \   + \frac{1}{2}\int^1_0 \| P_N^{\otimes n} \theta^M (t)\|^2_{L^2(\mathbb{T}^d)}dt  \Bigg]\notag\\
&= -\int_{\mathbb{T}^d}  \left\{  F (\boldsymbol{a}_{M}^g) + \sum_{\beta\in A^-}|(\boldsymbol{a}_{M}(x)^g)^\beta|^{q(\beta)} \right\}  dx + \int_{\mathbb{T}^d} \sum_{\beta\in A^-}|(\boldsymbol{a}_{M}^g)^\beta|^{q(\beta)}   dx \notag\\
&\ - \mathbb{E} \left[ \sum_{\beta\in A}  c_\beta \sum_{0<\gamma\le\beta,\ \gamma\in\mathbb{N}^n} \binom{\beta}{\gamma} \sum_{0\le \tilde \gamma \le \gamma, \tilde \gamma \in \mathbb{N}^n} \binom{\gamma}{\tilde \gamma} \int_{\mathbb{T}^d} :\mathbb{Y}^{\tilde\gamma}: (- \mathbb{Y}_M(\eta) + \sigma_{\alpha,M}^{\frac{1}{2}}\boldsymbol{b})^{\gamma - \tilde \gamma} (\boldsymbol{a}_M^g)^{\beta-\gamma}dx  \right]\notag \\
&\ + \frac{1}{2}\mathbb{E} \left[ \sum_{i=1}^n  \int_0^1 \int_{\mathbb{T}^d}  (\langle \nabla \rangle ^{\alpha} Z^\eta_{i,M}(t))^2 + ( a_i \langle \nabla \rangle ^{\alpha} (g_M^{r_i}) + b_i \sigma_{\alpha,M}^{\frac{1}{2}})^2dxdt \right] \notag \\
&\ + \mathbb{E} \left[ K\left| \sum^n_{i=1} \int_{\mathbb{T}^d} \left\{ :Y_{i}^2: + 2Y_{i} (-Y_{i,M}(\eta) + a_i g_M^{r_i} + \sigma_{\alpha,M}^{\frac{1}{2}}b_i) + (-Y_{i,M}(\eta) + a_i g_M^{r_i} + \sigma_{\alpha, M}^{\frac{1}{2}}b_i)^2   \right\} dx    \right|^b    \right] \notag \\
&\eqqcolon A_M^1 + A_M^2 + A_M^3 + A_M^4 + A_M^5. \label{kurai21}
\end{align}
Then, noting that  $\sigma_{\alpha,M}\asymp M^{d-2\alpha}$ from \eqref{dyn},$\ b\le \frac{d}{d-2\alpha}$ and
\[ \int_{\mathbb{T}^d} g_M(x)^{2r^{\max}}dx \sim M^{d-2\alpha}, \]
which can be seen from Lemma \ref{haki},
we can see that there holds
\begin{equation}\label{tako1}
|A_M^4 + A_M^5| \lesssim M^d.
\end{equation}
Moreover, 
\begin{equation}\label{tako3}
A_M^1 \asymp -\int_{\mathbb{T}^d}|g_M|^{\kappa_1}dx\asymp -M^{h_1} \ \ \ \ \ \mbox{with}\ h_1 > d
\end{equation}
from \eqref{shin}, \eqref{must} and \eqref{asita}, and 
\begin{equation}\label{tako4}
|A_M^2| \lesssim \int_{\mathbb{T}^d}|g_M|^{\kappa_2}dx\asymp M^{h_2} \ \ \ \ \ \ \  \mbox{with}\ h_2 < h_1
\end{equation}
from \eqref{shin2} and $\kappa_1 > \kappa_2$.
Also, similarly to the proof of previous theorems, we can choose $0<\eta<1$ and $\boldsymbol{b}\in\mathbb{R}^n$ such that
\begin{equation}\label{tako5}
|A_{M}^3|\lesssim 1.
\end{equation}
From \eqref{kurai21}, \eqref{tako1}, \eqref{tako3}, \eqref{tako4} and \eqref{tako5}, we obtain
\begin{align*}
&\lim_{L\rightarrow\infty}\lim_{N\rightarrow\infty}\mathbb{E} \Bigg[ -\left( R_N^F(\mathbb{Y}+I(\theta^{M}))\wedge L \right) + K\left| \int_{\mathbb{T}^d} \sum_{i=1}^n H_2 (P_N Y_i + P_N I_i(\theta^{M});\sigma_{\alpha,N})dx    \right|^b \\
&\ \ \ \ \ \ \ \ \ \ \ \  \ \ \ \ \ \ \ \ \ \ \ \ \ \ \ \ \  \ \ \ \ \ \ \ \ \ \ \ \ \ \ \ \ \ \ \ \ \ \ \ \ \ \ \  \ \ \ \ \ \ \ \ \ \ \ \ \ \ \ \ \ \ \ \ \ \ \ \ \ \ \ + \frac{1}{2}\int^1_0 \| P_N^{\otimes n} \theta^M (t)\|^2_{L^2(\mathbb{T}^d)}dt  \Bigg] \\
&\lesssim -cM^{h_1} + M^{h_2} + 1 + M^d \rightarrow -\infty
\end{align*}
as $M\rightarrow \infty$.\\
\textbf{(In the case that $\boldsymbol{\kappa_1 = \kappa_2}$)}
Let $h_M$ be a deterministic function satisfying $\mbox{supp}(\hat h_M)\subset \{ |l|\le M\}$,
\begin{equation} 
\int_{\mathbb{T}^d} | h_M(x)|^{\frac{\kappa_1}{2}}dx \asymp 1 
\end{equation}
and
\begin{equation}\label{asatte}
\int_{\mathbb{T}^d} |h_M(x)|^{\kappa_1}dx \asymp M^d .
\end{equation}
Note that $h_M$ satisfies
\begin{equation}\label{case} 
\int_{\mathbb{T}^d} |h_M(x)|^{2r^{\max}}dx \asymp M^{\frac{4d}{\kappa_1}r^{\max} - d} \lesssim M^{d-2\alpha} 
\end{equation}
from \eqref{must} and Lemma \ref{haki}. Then, we redefine $\theta^M=(\theta^M_1,\cdots,\theta^M_n)\in\mathbb{H}$ by
\[ \theta^M (t) \coloneqq - \langle \nabla \rangle^{\alpha} \boldsymbol{Z}_{M}^\eta (t) + \langle \nabla \rangle^{\alpha} \boldsymbol{a}_{\boldsymbol{r}}(h_M) + \sigma_{\alpha,M}^{\frac{1}{2}}\boldsymbol{b}. \]
Then, from the exact same calculation as above, we obtain
\begin{align}
&\lim_{L\rightarrow\infty}\lim_{N\rightarrow\infty}\mathbb{E} \Bigg[ -\left( R_N^F(\mathbb{Y}+I(\theta^M))\wedge L \right) + K\left| \int_{\mathbb{T}^d} \sum_{i=1}^n H_2 (P_N Y_i + P_N I_i(\theta^M);\sigma_{\alpha,N})dx    \right|^b \notag \\
&\ \ \ \ \ \ \ \ \ \ \ \  \ \ \ \ \ \ \ \ \ \ \ \ \ \ \ \ \  \ \ \ \ \ \ \ \ \ \ \ \ \ \ \ \ \ \ \ \ \ \ \ \ \ \ \  \ \ \ \ \ \ \ \ \ \ \ \ \ \ \ \ \ \ \ \ \ \ \ \ \ \ \ + \frac{1}{2}\int^1_0 \| P_N^{\otimes n} \theta^M (t)\|^2_{L^2(\mathbb{T}^d)}dt  \Bigg]\notag\\
&= -\int_{\mathbb{T}^d}  \left\{  F (\boldsymbol{a}_{M}^h) + \sum_{\beta\in A^-}|(\boldsymbol{a}_{M}(x)^h)^\beta|^{q(\beta)} \right\}  dx + \int_{\mathbb{T}^d} \sum_{\beta\in A^-}|(\boldsymbol{a}_{M}^h)^\beta|^{q(\beta)}   dx    \notag\\
&\ - \mathbb{E} \left[ \sum_{\beta\in  A} c_\beta \sum_{0<\gamma\le\beta,\ \gamma\in\mathbb{N}^n} \binom{\beta}{\gamma} \sum_{0\le \tilde \gamma \le \gamma, \tilde \gamma \in \mathbb{N}^n} \binom{\gamma}{\tilde \gamma} \int_{\mathbb{T}^d} :\mathbb{Y}^{\tilde\gamma}: (- \mathbb{Y}_M(\eta) + \sigma_{\alpha,M}^{\frac{1}{2}}\boldsymbol{b})^{\gamma - \tilde \gamma} (\boldsymbol{a}_M^h)^{\beta-\gamma}dx  \right]\notag \\
&\ + \frac{1}{2}\mathbb{E} \left[ \sum_{i=1}^n  \int_0^1 \int_{\mathbb{T}^d}  (\langle \nabla \rangle ^{\alpha} Z^\eta_{i,M}(t))^2 + ( a_i \langle \nabla \rangle ^{\alpha} (h_M^{r_i}) + b_i \sigma_{\alpha,M}^{\frac{1}{2}})^2 dxdt\right] \notag \\
&\ + \mathbb{E} \left[ K\left| \sum^n_{i=1} \int_{\mathbb{T}^d} \left\{ :Y_{i}^2: + 2Y_{i} (-Y_{i,M}(\eta) + a_i h_M^{r_i} + \sigma_{\alpha,M}^{\frac{1}{2}}b_i) + (-Y_{i,M}(\eta) + a_i h_M^{r_i} + \sigma_{\alpha, M}^{\frac{1}{2}}b_i)^2   \right\} dx    \right|^b    \right] \notag \\
&\eqqcolon B_M^1 + B_M^2 + B_M^3 + B_M^4 + B_M^5. \label{kurai31}
\end{align}
Then, from $\kappa_1 = \kappa_2$, $\sigma_{\alpha,M}\asymp M^{d-2\alpha}$, $b\le \frac{d}{d-2\alpha}$, \eqref{asatte} and \eqref{case}, we obtain
\begin{equation}
B_M^1 + B_M^2 + B_M^4 + B_M^5  \lesssim M^d.
\end{equation}
Therefore, if we can choose $0<\eta<1$ and $\boldsymbol{b}\in\mathbb{R}^n$ such that
\begin{equation}\label{sakanaf}
B_{M}^3 \asymp -M^{h_3} \ \ \ \ \ \mbox{with}\ h_3 > d,
\end{equation}
there holds
\begin{align*}
&\lim_{L\rightarrow\infty}\lim_{N\rightarrow\infty}\mathbb{E} \Bigg[ -\left( R_N^F(\mathbb{Y}+I(\theta^{M}))\wedge L \right) + K\left| \int_{\mathbb{T}^d} \sum_{i=1}^n H_2 (P_N Y_i + P_N I_i(\theta^{M});\sigma_{\alpha,N})dx    \right|^b \\
&\ \ \ \ \ \ \ \ \ \ \ \ \ \  \ \ \ \ \ \ \ \ \ \ \ \ \ \ \ \ \  \ \ \ \ \ \ \ \ \ \ \ \ \ \ \ \ \ \ \ \ \ \ \ \ \ \ \  \ \ \ \ \ \ \ \ \ \ \ \ \ \ \ \ \ \ \ \ \ \ \ \ \ \ + \frac{1}{2}\int^1_0 \| P_N^{\otimes n} \theta^M (t)\|^2_{L^2(\mathbb{T}^d)}dt  \Bigg]\\
&\lesssim -cM^{h_3} + M^{d} \rightarrow -\infty
\end{align*}
as $M\rightarrow \infty$ and we can finish the proof. In the following, we prove the existence of such $0<\eta<1$ and $\boldsymbol{b}\in\mathbb{R}^n$. From the same kind of argument as the proofs of the previous theorems (see for example the argument around \eqref{hoshu2} and \eqref{hoshu} in the proof of Theorem \ref{nornon} (ii)), we get
\begin{equation}
B_M^3 \simeq - \mathbb{E} \left[ \sum_{\beta\in  A}  c_\beta \sum_{0<\gamma\le\beta,\ \gamma\in\mathbb{N}^n} \binom{\beta}{\gamma} \int_{\mathbb{T}^d} H_\gamma \left(  \frac{\mathbb{Y}_M -\mathbb{Y}_M(\eta)}{\sigma_{\alpha,M}^{\frac{1}{2}}} + \boldsymbol{b} ; 1  \right) \sigma_{\alpha,M}^{\frac{|\gamma|}{2}}h_M^{(\beta-\gamma) \cdot \boldsymbol{r}}  \boldsymbol{a}^{\beta-\gamma}dx  \right]. 
\end{equation}
Noting that  
\[ \sigma_{\alpha,M}^{\frac{|\gamma|}{2}}\int_{\mathbb{T}^d}h_M^{(\beta -\gamma)\cdot\boldsymbol{r}}dx \asymp M^{\frac{|\gamma|}{2}(d-2\alpha)+\frac{2d}{\kappa_1}(\beta -\gamma)\cdot \boldsymbol{r}-d}\]
from Lemma \ref{haki} and $\sigma_{\alpha,M} \asymp M^{d-2\alpha}$ as in \eqref{dyn}, we define 
\[ \kappa_3 \coloneqq \max\left\{ \frac{|\gamma|}{2}(d-2\alpha)+ \frac{2d}{\kappa_1}(\beta-\gamma)\cdot\boldsymbol{r}-d\ ;\ \beta\in A,\ \gamma\in\mathbb{N}^n,\ 0<\gamma\le\beta,\ \boldsymbol{a}^{\beta-\gamma}\neq 0   \right\}. \]
Then, from a similar argument to the last part of the proofs of Theorems \ref{nornon} (ii), we can choose $\eta$ and $\boldsymbol{b}$  such that
\begin{equation} \label{gan}
B_{M}^3 \asymp - M^{\kappa_3}.
\end{equation}
Moreover, from the assumption $\kappa_1 = \kappa_2$, \eqref{shin2} and \eqref{paka2},
\begin{align}
\kappa_3 &= \max \left\{ \frac{|\gamma|}{2}(d-2\alpha) + \frac{2d}{\kappa_1}(\beta-\gamma)\cdot\boldsymbol{r}-d \ ;\ \beta\in A,\ \gamma\in\mathbb{N}^n,\ 0< \gamma\le\beta,\ \boldsymbol{a}^{\beta-\gamma}\neq 0  \right\} \notag \\
&= \max \left\{ \frac{|\xi - \beta|}{2}(d-2\alpha) + \frac{2d}{\kappa_2}\beta\cdot\boldsymbol{r}-d \ ;\ \beta\in A^-,\ \xi\in A,\ \xi > \beta,\ \boldsymbol{a}^{\beta}\neq 0  \right\} \notag \\
&=\min_{(\eta\in A^-,\ \boldsymbol{a}^\eta \neq 0)} \max_{(\beta\in A^-,\ \xi\in  A,\ \xi > \beta,\ \boldsymbol{a}^{\beta}\neq 0 )}  \left\{ \frac{|\xi - \beta|}{2}(d-2\alpha) + \frac{2d(\beta\cdot\boldsymbol{r})}{q(\eta)(\eta\cdot\boldsymbol{r})}-d \right\} \notag \\
&> d. \label{pistol99}
\end{align}
From \eqref{gan} and \eqref{pistol99}, we obtain \eqref{sakanaf}.

\end{proof}

\section{The proof of the critical case}\label{criti}
In this section, we prove Theorem \ref{corocoro}. To prove Theorem \ref{corocoro} (i), it suffices to show that
\begin{align}
&\limsup_{N\rightarrow\infty} \inf_{\theta\in\mathbb{H}} \mathbb{E} \Bigg[ -R_N^{\lambda F}(\mathbb{Y}+I(\theta))+ K \left| \sum_{i=1,2} \int_{\mathbb{T}^d}  H_2 \left(P_N Y_i + P_N I_i(\theta);\sigma_{\frac{4}{9}d,N}\right)dx \right|^9 \notag \\
&\ \ \ \ \ \ \ \ \ \ \ \ \ \ \ \ \ \ \ \ \ \ \ \ \ \ \ \ \ \ \ \ \ \ \ \ \ \ \ \ \ \ \ \ \ \ \ \ \ \ \ \ \ \ \ \ \ \ \ \ \ \ \ \ \ \ \ \ \ \ \ \  + \frac{1}{2}\int^1_0 \| P_N^{\otimes n} \theta (t)\|^2_{L^2(\mathbb{T}^d)}dt \Bigg] >  -\infty  \label{moku9}
\end{align}
for sufficiently small $\lambda>0$ similarly to the proof of the previous theorems, where $R_N^{\lambda F}$ is defined by \eqref{rndef}. 
We are now in the situation that $b=\frac{d}{d-2\alpha}$, which means that we cannot use Lemma \ref{gogo} to control the second term in \eqref{moku9}.
However, there holds weak version of Lemma \ref{gogo}.
\begin{lemm}\label{tamanegi}
There holds
\[ \sum_{i=1,2}\mathbb{E} \left[ \| I_{i,N} (\theta) \|_{L^2 (\mathbb{T}^d)}^{\frac{2d}{d-2\alpha}} \right] \lesssim \mathbb{E} \left[  \left| \sum_{i=1,2}\int_{\mathbb{T}^d} \left( 2Y_{i,N} I_{i,N}(\theta) + I_{i,N}(\theta)^2 \right)dx \right|^{\frac{d}{d-2\alpha}} + \| I_{N} (\theta) \|_{H^\alpha(\mathbb{T}^d)}^2 \right] +1  \]
where the implicit constant is uniform in $\theta \in \mathbb{H}$ and $N\in \mathbb{N}$.
\end{lemm}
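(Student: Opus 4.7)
The starting point is the algebraic decomposition
\[
\|I_{i,N}(\theta)\|_{L^2(\mathbb{T}^d)}^2 \;=\; \int_{\mathbb{T}^d}\!\bigl(2Y_{i,N}I_{i,N}(\theta)+I_{i,N}(\theta)^2\bigr)\,dx \;-\; 2\int_{\mathbb{T}^d}\! Y_{i,N}I_{i,N}(\theta)\,dx.
\]
Summing over $i\in\{1,2\}$, applying the triangle inequality, raising to the critical power $p:=\tfrac{d}{d-2\alpha}$, and using $\sum_i x_i^p\le(\sum_i x_i)^p$ (for $x_i\ge 0$, $p\ge 1$) together with $(a+b)^p\lesssim_p a^p+b^p$, one reduces the claim to the estimate
\[
\mathbb{E}\,\Bigl|\sum_{i=1,2}\!\int_{\mathbb{T}^d}\! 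Y_{i,N}I_{i,N}(\theta)\,dx\Bigr|^{p} \;\lesssim\; \mathbb{E}\|I_N(\theta)\|_{H^\alpha(\mathbb{T}^d)}^2 \;+\; 1.
\]

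For the cross term, the plan is to apply duality componentwise, which gives $\bigl|\sum_i\!\int Y_{i,N}I_{i,N}\bigr|\le \|\mathbb{Y}_N\|_{H^{-\alpha}(\mathbb{T}^d)}\,\|I_N(\theta)\|_{H^\alpha(\mathbb{T}^d)}$. The Gaussian random variable $\|\mathbb{Y}_N\|_{H^{-\alpha}}$ sits in the first Wiener chaos with second moment $\sum_{|l|\le N}(1+|l|^2)^{-2\alpha}$, which is bounded uniformly in $N$ because $2\alpha>d/2$ in the setting of Theorem~\ref{corocoro} (here $\alpha=\tfrac{4}{9}d>d/4$); by Gaussian hypercontractivity one then obtains $\mathbb{E}\|\mathbb{Y}_N\|_{H^{-\alpha}}^q\lesssim_q 1$ for every $q\ge 1$.

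The main obstacle is the criticality $p>2$ (indeed $p=9$ in our regime), which rules out a direct Young's inequality of the shape $\|Y\|^p\|I\|_{H^\alpha}^p\le \epsilon\,\|I\|_{H^\alpha}^2 + C_\epsilon(\ldots)$: the exponents on $\|I\|_{H^\alpha}$ simply do not balance. To go around this, the plan is to first invoke Lemma~\ref{gogo} with the subcritical exponent $b=p-\delta$ for $\delta>0$ small, yielding
\[
\mathbb{E}\|I_N(\theta)\|_{L^2}^{2(p-\delta)} \;\lesssim\; \mathbb{E}\Bigl|\sum_i\!\int(2Y_{i,N}I_{i,N}+I_{i,N}^2)\,dx\Bigr|^{p-\delta} + \mathbb{E}\|I_N(\theta)\|_{H^\alpha}^2 + 1,
\]
and then to bridge the gap to the critical exponent via the interpolation $\|I_N(\theta)\|_{L^2}^{2p}\le \|I_N(\theta)\|_{L^2}^{2(p-\delta)}\,\|I_N(\theta)\|_{H^\alpha}^{2\delta}$ (using $\|I_N\|_{L^2}\le\|I_N\|_{H^\alpha}$) followed by H\"older's inequality with a weight tuned so that, as $\delta\downarrow 0$, only the first power $\|I_N(\theta)\|_{H^\alpha}^2$ survives on the right. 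The additive $+1$ absorbs both the Wick-square contribution $\mathbb{E}|\!\int\!\!:\!Y_{i,N}^2\!:\!dx|^{p-\delta}\lesssim 1$ coming from Proposition~\ref{gono} and the uniform-in-$N$ moments of $\|\mathbb{Y}_N\|_{H^{-\alpha}}$.
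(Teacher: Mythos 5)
Your reduction to the cross term, and your treatment of the Wick square and of the Gaussian moments of $\|\mathbb{Y}_N\|_{H^{-\alpha}}$ (indeed uniformly bounded since $4\alpha>d$), are fine, and you correctly diagnose that a direct Young's inequality fails at the critical exponent $p=\frac{d}{d-2\alpha}$. The gap is in the final ``bridging'' step, which does not close. From Lemma \ref{gogo} with $b=p-\delta$ you obtain, for each fixed $\delta>0$, a bound on $\mathbb{E}\|I_N(\theta)\|_{L^2}^{2(p-\delta)}$; you then write $\|I_N(\theta)\|_{L^2}^{2p}\le\|I_N(\theta)\|_{L^2}^{2(p-\delta)}\|I_N(\theta)\|_{H^\alpha}^{2\delta}$ and apply H\"older. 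But to turn the factor $\|I_N(\theta)\|_{H^\alpha}^{2\delta}$ into $\mathbb{E}\|I_N(\theta)\|_{H^\alpha}^{2}$ you are forced to take the conjugate exponent $r'=1/\delta$, hence $r=\frac{1}{1-\delta}$, and the other factor becomes $\mathbb{E}\|I_N(\theta)\|_{L^2}^{2(p-\delta)/(1-\delta)}$ with $\frac{p-\delta}{1-\delta}>p$ whenever $p>1$: you are bounding the $2p$-th moment by a strictly higher moment, which is circular. The same arithmetic kills a pointwise Young splitting (the exponent ratios sum to $\frac{p-\delta}{p}+\delta>1$), and a limit $\delta\downarrow 0$ is not available because the constants $c(b)$, $C_\epsilon$ of Lemma \ref{gogo} degenerate as $b\uparrow\frac{d}{d-2\alpha}$. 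More structurally, no inequality of the form $\mathbb{E}\|I_N(\theta)\|_{L^2}^{2(p-\delta)}\lesssim \mathrm{RHS}$ can yield $\mathbb{E}\|I_N(\theta)\|_{L^2}^{2p}\lesssim \mathrm{RHS}$ by interpolation against $\|I_N(\theta)\|_{H^\alpha}$ alone: testing with a deterministic constant drift $I_N(\theta)\equiv R$ shows the subcritical bound carries no information about the critical one.

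The criticality is genuine and not an artifact of your particular estimates: whether one uses duality plus interpolation (as you do) or a frequency decomposition of $Y_{i,N}$ optimized over the cutoff, the cross term $\bigl|\int Y_{i,N}I_{i,N}(\theta)\,dx\bigr|^{p}$ lands exactly on the borderline where $\frac{1}{2p}\cdot(\text{exponent of }\|I\|_{L^2})+\frac{1}{2}\cdot(\text{exponent of }\|I\|_{H^\alpha})=1$, leaving no room to absorb the Gaussian factor. The paper does not give a self-contained argument but refers to \cite[Lemma 3.6]{phi3}; the proof there resolves this borderline case by exploiting more than the pathwise sizes of $Y$ and $I(\theta)$ (namely the adaptedness of the drift and the stochastic structure of the pairing $\int Y_{i,N}I_{i,N}(\theta)\,dx$). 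Some input of that kind is exactly what your argument is missing.
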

This lemma can be thought of a simple generalization of \cite[Lemma 3.6]{phi3} and can be proven by basically the same argument, so we omit the proof. (Letting $\alpha =1$ and $d=3$ in Lemma \ref{tamanegi}, we can essentially recover \cite[Lemma 3.6]{phi3}. )

\begin{proof}[Proof of Theorem \ref{corocoro} \textup{(i)}]
The proof is similar to that of Theorem \ref{nornon1} (i) and the only difference is we have to use Lemma \ref{corocoro} to control the second term in \eqref{moku9} instead of Lemma \ref{gogo}. 
For simplicity, we only prove for $F(x,y)=xy^3-100x^6$, but the same argument can be applied to any polynomial $F(x,y)=xy^3-Cx^6$ with $C\ge 100$. 
For $\lambda >0$, from \eqref{rndef},
\begin{align}
-R_N^{\lambda F} (\mathbb{Y}+I(\theta)) &= -\lambda \int_{\mathbb{T}^d} \left( Y_{1,N}+I_{1,N}(\theta) \right) H_3(Y_{2,N}+I_{2,N}(\theta);\sigma_{\frac{4}{9}d,N})dx \notag \\
&\  + 100\lambda \int_{\mathbb{T}^d} H_6 (Y_{1,N}+I_{1,N}(\theta);\sigma_{\frac{4}{9}d,N})dx \notag \\
&=  -\lambda \int_{\mathbb{T}^d} \bigg\{ Y_{1,N}:Y_{2,N}^3: + I_{1,N}(\theta):Y_{2,N}^3: + 3Y_{1,N}:Y_{2,N}^2:I_{2,N}(\theta) \notag \\
& \ \ \ \ \ \ \ \ \ \ \ \ \ \ \ \ \  + 3 I_{1,N}(\theta):Y_{2,N}^2:I_{2,N}(\theta) +3Y_{1,N}Y_{2,N}I_{2,N}(\theta)^2 + 3I_{1,N}(\theta)Y_{2,N}I_{2,N}(\theta)^2\notag \\
&\ \ \ \ \ \ \ \ \ \ \ \ \ \ \ \ \ \ \ \ \ \ \ \ \ \ \ \ \ \ \ \ \ \ \ \ \ \ \ \ \ \ \ \ \ \ \ \ \ \ \ \ \ \ \ \ \ \ \ \ \ \  +Y_{1,N}I_{2,N}(\theta)^3 +I_{1,N}(\theta)I_{2,N}(\theta)^3  \bigg\} dx \notag \\
&\ + 100\lambda \int_{\mathbb{T}^d}\sum_{l=1}^6 \binom{6}{l}:Y_{1,N}^l:I_{1,N}(\theta)^{6-l}dx + 100\lambda \int_{\mathbb{T}^d} I_{1,N}(\theta)^6 dx. \label{cara}
\end{align}
On the other hand, from Lemma \ref{tamanegi} and \eqref{new3}, 
\begin{align}
&\mathbb{E} \left[ K \left|\sum_{i=1,2}\int_{\mathbb{T}^d}\left( I_{i,N}(\theta)^2 + 2Y_{i,N}I_{i,N}(\theta) + :Y_{i,N}^2:  \right)dx \right|^9 + \frac{1}{2}\int_0^1 \| P_N^{\otimes 2}\theta (t) \|^2_{L^2(\mathbb{T}^d)}dt  \right] \notag \\
&\ge c_0 \| I_{N}(\theta)\|_{L^2(\mathbb{T}^d)}^{18} + \frac{1}{4}\| I_{N}(\theta)\|_{H^{\frac{4}{9}d}(\mathbb{T}^d)}^2 - C_1 \label{kara}
\end{align}
for some $c_0>0$ and $C_1>0$. Now, we estimate the each term of \eqref{cara} noting that the regularity of the Wick power $:Y_{1,N}^l:$ is $-\frac{l}{18}-\epsilon$ for any $\epsilon>0$, see Proposition \ref{gono} and \eqref{gonon}.
For $1\le l \le 5$, from the interpolation inequality (Lemma \ref{inter}),
\begin{align}
\left| \int_{\mathbb{T}^d} :Y_{1,N}^l:I_{1,N}(\theta)^{6-l} dx   \right| &\le \| :Y_{1,N}^l: \|_{W^{-\frac{l}{18}-\epsilon, \infty}(\mathbb{T}^d)} \| I_{1,N}(\theta)^{6-l}  \|_{W^{\frac{l}{18}+2\epsilon, 1}(\mathbb{T}^d)}\notag \\
& \le Q_N \| I_{1,N}(\theta)^{6-l}  \|_{L^{1}(\mathbb{T}^d)}^{1-\frac{l}{8}-\frac{9}{2d}\epsilon}  \| I_{1,N}(\theta)^{6-l}  \|_{W^{\frac{4}{9}d, 1}(\mathbb{T}^d)}^{\frac{l}{8}+\frac{9}{2d}\epsilon}. \label{hoho} 
\end{align}
Therefore, for $2\le l \le 5$, from the fractional Leibniz rule (Lemma \ref{fractional}) and Young's inequality,
\begin{align}
&\left| \int_{\mathbb{T}^d} :Y_{1,N}^l:I_{1,N}(\theta)^{6-l} dx   \right| \le Q_N \| I_{1,N}(\theta)  \|_{L^{6-l}(\mathbb{T}^d)}^{(6-l)(1-\frac{l}{8}-\frac{9}{2d}\epsilon)}  \left\{ \| I_{1,N}(\theta)  \|_{H^{\frac{4}{9}d}(\mathbb{T}^d)}  \| I_{1,N}(\theta)  \|_{L^{10-2l}(\mathbb{T}^d)}^{5-l}  \right\}^{\frac{l}{8}+\frac{9}{2d}\epsilon} \notag \\
& \le \delta \| I_{1,N}(\theta)  \|_{H^{\frac{4}{9}d}(\mathbb{T}^d)}^2 + \delta \| I_{1,N}(\theta) \|^6_{L^6(\mathbb{T}^d)} + C_\delta Q_N \label{kara2}
\end{align}
for any $\delta>0$ and $0< \epsilon \ll 1$. Note that random constants $Q_N$ can be different from line to line as mentioned in the beginning of Section \ref{norma1}.
For $l=1$, from \eqref{hoho}, the fractional Leibniz rule, Sobolev embedding and Young's inequality, 
\begin{align}
&\left| \int_{\mathbb{T}^d} :Y_{1,N}:I_{1,N}(\theta)^{5} dx   \right| \notag \\
&\le Q_N \|I_{1,N}(\theta)  \|_{L^5(\mathbb{T}^d)}^{5(\frac{7}{8}-\frac{9}{2d}\epsilon)}  \left\{ \| I_{1,N}(\theta)  \|_{H^{\frac{4}{9}d}(\mathbb{T}^d)}  \| I_{1,N}(\theta)  \|_{L^{18}(\mathbb{T}^d)}^{2} \| I_{1,N}(\theta)  \|_{L^{6}(\mathbb{T}^d)}^{2}  \right\}^{\frac{1}{8}+\frac{9}{2d}\epsilon} \notag \\
&\le Q_N \|I_{1,N}(\theta)  \|_{L^5(\mathbb{T}^d)}^{5(\frac{7}{8}-\frac{9}{2d}\epsilon)} \left\{ \| I_{1,N}(\theta)  \|_{H^{\frac{4}{9}d}(\mathbb{T}^d)}^3 \| I_{1,N}(\theta)  \|_{L^{6}(\mathbb{T}^d)}^{2}  \right\}^{\frac{1}{8}+\frac{9}{2d}\epsilon} \notag  \\ 
& \le \delta \| I_{1,N}(\theta)  \|_{H^{\frac{4}{9}d}(\mathbb{T}^d)}^2 + \delta \| I_{1,N}(\theta) \|^6_{L^6(\mathbb{T}^d)} + C_\delta Q_N \label{kara3}
\end{align}
for any $\delta>0$ and $0< \epsilon \ll 1$. The remaining terms of \eqref{cara} can be estimated as follows. For any $\delta>0$, we get 
\begin{align}
\left|\int_{\mathbb{T}^d} I_{1,N}(\theta):Y_{2,N}^3: dx\right| &\le \| :Y_{2,N}^3: \|_{W^{-\frac{1}{6}d-\epsilon,\infty}(\mathbb{T}^d)}\| I_{1,N}(\theta) \|_{W^{\frac{1}{6}d+2\epsilon,1}(\mathbb{T}^d)}\notag\\
&\le \delta \| I_{1,N}(\theta)\|_{H^{\frac{4}{9}d}(\mathbb{T}^d)}^2 + C_\delta Q_N, \label{bara1}
\end{align}
\begin{align}
\left|\int_{\mathbb{T}^d} Y_{1,N}:Y_{2,N}^2:I_{2,N}(\theta)      dx\right| &\le \|  Y_{1,N}:Y_{2,N}^2: \|_{W^{-\frac{1}{6}d-\epsilon,\infty}(\mathbb{T}^d)}\| I_{2,N}(\theta) \|_{W^{\frac{1}{6}d+2\epsilon,1}(\mathbb{T}^d)}\notag\\
&\le \delta \| I_{2,N}(\theta)\|_{H^{\frac{4}{9}d}(\mathbb{T}^d)}^2 + C_\delta Q_N, \label{bara2}
\end{align}
\begin{align}
&\left|\int_{\mathbb{T}^d}  I_{1,N}(\theta):Y_{2,N}^2:I_{2,N}(\theta)     dx\right| \notag \\ 
&\le \| :Y_{2,N}^2:  \|_{W^{-\frac{1}{9}d-\epsilon,\infty}(\mathbb{T}^d)}\| I_{1,N}(\theta) I_{2,N}(\theta) \|_{W^{\frac{1}{9}d+2\epsilon,1}(\mathbb{T}^d)}\notag\\
&\le Q_N \left\{  \| I_{1,N}(\theta)\|_{H^{\frac{4}{9}d}(\mathbb{T}^d)} \| I_{2,N}(\theta) \|_{L^2(\mathbb{T}^d)} +  \| I_{1,N}(\theta) \|_{L^2(\mathbb{T}^d)}\| I_{2,N}(\theta)\|_{H^{\frac{4}{9}d}(\mathbb{T}^d)} \right\} \notag \\
&\le \delta \| I_{1,N}(\theta)\|_{H^{\frac{4}{9}d}(\mathbb{T}^d)}^2 + \delta  \| I_{2,N}(\theta)\|_{H^{\frac{4}{9}d}(\theta)}^2  + \delta \| I_{1,N}(\theta) \|_{L^6(\mathbb{T}^d)}^{6}  +  \delta \| I_{2,N}(\theta) \|_{L^2(\mathbb{T}^d)} ^{18} + C_\delta Q_N, \label{bara3}
\end{align}
\begin{align}
\left|\int_{\mathbb{T}^d} Y_{1,N}Y_{2,N}I_{2,N}(\theta)^2      dx\right| &\le \| Y_{1,N}Y_{2,N}  \|_{W^{-\frac{1}{9}d-\epsilon,\infty}(\mathbb{T}^d)}\|  I_{2,N}(\theta)^2 \|_{W^{\frac{1}{9}d+2\epsilon,1}(\mathbb{T}^d)}\notag \\
&\le Q_N   \| I_{2,N}(\theta)\|_{H^{\frac{4}{9}d}(\mathbb{T}^d)} \| I_{2,N}(\theta) \|_{L^2(\mathbb{T}^d)}  \notag \\
&\le \delta  \| I_{2,N}(\theta)\|_{H^{\frac{4}{9}d}(\mathbb{T}^d)}^2  + \delta \| I_{2,N}(\theta) \|_{L^2(\mathbb{T}^d)}^{18} + C_\delta Q_N, \label{bara4}
\end{align}
and
\begin{align}
\left|\int_{\mathbb{T}^d} Y_{1,N}I_{2,N}(\theta)^3      dx\right| &\le \| Y_{1,N} \|_{W^{-\frac{1}{18}d-\epsilon,\infty}(\mathbb{T}^d)} \| I_{2,N}(\theta)^3 \|_{W^{\frac{1}{18}d+2\epsilon,1}(\mathbb{T}^d)} \notag\\
&\le Q_N \| I_{2,N}(\theta) \|_{H^{\frac{1}{18}d+2\epsilon}(\mathbb{T}^d)} \| I_{2,N}(\theta) \|_{L^4(\mathbb{T}^d)}^2\notag \\
&\le Q_N \left( \| I_{2,N}(\theta) \|_{L^{2}(\mathbb{T}^d)}^{\frac{7}{8}-\frac{9}{2d}\epsilon} \| I_{2,N}(\theta) \|_{H^{\frac{4}{9}d}(\mathbb{T}^d)}^{\frac{1}{8}+\frac{9}{2d}\epsilon}\right) \left( \| I_{2,N}(\theta) \|_{L^{2}(\mathbb{T}^d)}^{\frac{7}{8}} \| I_{2,N}(\theta) \|_{H^{\frac{4}{9}d}(\mathbb{T}^d)}^{\frac{9}{8}}\right) \notag \\
&\le \delta \| I_{2,N}(\theta)\|_{H^{\frac{4}{9}d}(\mathbb{T}^d)}^2  + \delta \| I_{2,N}(\theta) \|_{L^2(\mathbb{T}^d)}^{18} + C_\delta Q_N. \label{bara5}
\end{align}
Moreover,
\begin{align}
&\left|\int_{\mathbb{T}^d}  I_{1,N}(\theta)Y_{2,N}I_{2,N}(\theta)^2     dx\right| \notag \\  
&\le \| Y_{2,N} \|_{W^{-\frac{1}{18}d-\epsilon,\infty}(\mathbb{T}^d)} \| I_{1,N}(\theta)Y_{2,N}I_{2,N}(\theta)^2 \|_{W^{\frac{1}{18}d+2\epsilon,1}(\mathbb{T}^d)} \notag\\
&\le Q_N \left\{ \| I_{1,N}(\theta) \|_{H^{\frac{1}{18}d+2\epsilon}(\mathbb{T}^d)} \| I_{2,N}(\theta) \|_{L^4(\mathbb{T}^d)}^2 + \| I_{1,N}(\theta) \|_{L^{6}(\mathbb{T}^d)} \| I_{2,N}(\theta) \|_{H^{\frac{4}{9}d}(\mathbb{T}^d)}  \| I_{2,N}(\theta) \|_{L^{3}(\mathbb{T}^d)}  \right\}.\label{baratoku} 
\end{align}
Here, there holds
\[ \| I_{2,N}(\theta)\|_{L^4(\mathbb{T}^d)}^2 \lesssim \| I_{2,N}(\theta)\|_{H^{\frac{d}{4}}(\mathbb{T}^d)}^2 \lesssim \| I_{2,N}(\theta)\|_{L^{2}(\mathbb{T}^d)}^{\frac{7}{8}}\| I_{2,N}(\theta)\|_{H^{\frac{4}{9}d}(\mathbb{T}^d)}^{\frac{9}{8}}  ,\] 
\[ \| I_{2,N}(\theta)\|_{L^3(\mathbb{T}^d)} \lesssim \| I_{2,N}(\theta)\|_{H^{\frac{d}{6}}(\mathbb{T}^d)} \lesssim \| I_{2,N}(\theta)\|_{L^{2}(\mathbb{T}^d)}^{\frac{5}{8}}\| I_{2,N}(\theta)\|_{H^{\frac{4}{9}d}(\mathbb{T}^d)}^{\frac{3}{8}}  ,\]
and the first term in the rightmost hand of \eqref{baratoku} can be handled similarly to \eqref{bara5}. Therefore, we get
\begin{align}
&\left|\int_{\mathbb{T}^d}  I_{1,N}(\theta)Y_{2,N}I_{2,N}(\theta)^2     dx\right|\notag \\
&\le \delta \| I_{1,N}(\theta)\|_{H^{\frac{4}{9}d}(\mathbb{T}^d)}^2 + \delta \| I_{2,N}(\theta)\|_{H^{\frac{4}{9}d}(\mathbb{T}^d)}^2 + \delta \| I_{1,N}(\theta) \|_{L^6(\mathbb{T}^d)}^{6}  + \delta \| I_{2,N}(\theta) \|_{L^2(\mathbb{T}^d)}^{18} + C_\delta Q_N. \label{bara6}
\end{align}
And lastly, from Young's inequality, Sobolev embedding and interpolation inequality,
\begin{align}
\lambda \left|\int_{\mathbb{T}^d} I_{1,N}(\theta)I_{2,N}(\theta)^3      dx\right| &\le \lambda \left\{ \frac{1}{6}\| I_{1,N}(\theta)\|_{L^6(\mathbb{T}^d)}^6 + \frac{5}{6}\| I_{2,N}(\theta)\|_{L^{\frac{18}{5}}(\mathbb{T}^d)}^{\frac{18}{5}} \right\}\notag \\
&\le \lambda \left\{\frac{1}{6}\| I_{1,N}(\theta)\|_{L^6(\mathbb{T}^d)}^6 + c_1 \left(\| I_{2,N}(\theta) \|_{H^{\frac{4}{9}d}(\mathbb{T}^d)}^2 + \| I_{2,N}(\theta)\|_{L^2(\mathbb{T}^d)}^{18} \right)\right\} \label{bara7}
\end{align}
for some $c_1 >0$.
Combining \eqref{cara}--\eqref{bara5}, \eqref{bara6} and \eqref{bara7}, we get
\begin{align}
&\mathbb{E} \Bigg[ -R_N^{\lambda F}(\mathbb{Y}+I(\theta))+ K \left| \sum_{i=1,2}\int_{\mathbb{T}^d}  H_2 \left(P_N Y_i + P_N I_i(\theta);\sigma_{\frac{4}{9}d,N}\right)dx \right|^9  + \frac{1}{2}\int^1_0 \| P_N^{\otimes n} \theta (t)\|^2_{L^2(\mathbb{T}^d)}dt \Bigg] \notag \\
&\ge 50\lambda  \| I_{1,N}(\theta)\|_{L^6(\mathbb{T}^d)}^6 + \frac{c_0}{2}\| I_{2,N}(\theta)   \|_{L^2(\mathbb{T}^d)}^{18} + \frac{1}{8}\| I_N(\theta) \|^2_{H^{\frac{4}{9}d}(\mathbb{T}^d)} \notag \\
&\ \ \ \ \ \ \ \ \ \ \ \ \ \ \ \ \ \ \ \ \  -\lambda \left\{\frac{1}{6} \| I_{1,N}(\theta)\|_{L^6(\mathbb{T}^d)}^6 +c_1\| I_{2,N}(\theta) \|_{H^{\frac{4}{9}d}(\mathbb{T}^d)}^2 + c_1\| I_{2,N}(\theta)\|_{L^2(\mathbb{T}^d)}^{18} \right\}  - C  > -\infty \notag
\end{align}
uniformly in $\theta \in \mathbb{H}$ and $N\in \mathbb{N}$ if $\lambda \ll 1$. This proves \eqref{moku9}.

\end{proof}

Next, we prove Theorem \ref{corocoro} (ii).
Similarly to Section \ref{norma4}, it suffices to show 
\begin{align}
&\limsup_{L\rightarrow\infty}\limsup_{N\rightarrow\infty} \inf_{\theta\in\mathbb{H}} \mathbb{E} \Bigg[ -\min \left( R_N^{\lambda F}(\mathbb{Y}+I(\theta)), L \right) +  K\left| \sum_{i=1,2}\int_{\mathbb{T}^d}  H_2 (P_N Y_i + P_N I_i(\theta);\sigma_{\frac{4}{9}d,N})    \right|^b  \notag \\
& \ \ \ \ \ \ \ \ \ \ \ \ \ \ \ \ \ \ \ \ \ \ \ \ \ \ \ \ \ \ \ \ \ \ \ \ \ \ \ \ \ \ \ \ \ \ \ \ \ \ \ \ \ \ \ \ \ \ \ \ \ \ \ \ \ \ \ \ \ \ \   + \frac{1}{2}\int^1_0 \| P_N^{\otimes n} \theta (t)\|^2_{L^2(\mathbb{T}^d)}dt \Bigg] = -\infty  \label{garr}
\end{align}
for $\lambda \gg 1$.

\begin{proof}[Proof of Theorem \ref{corocoro} \textup{(ii)}]
The proof is similar to that of Theorem \ref{nornon1} (ii).
We only prove for $F(x,y)=xy^3-100x^6$, but similar argument also can be applied to any $F(x,y)=xy^3-Cx^6$ with $C>0$. We define $\theta^M(t)= (\theta_1^M(t),\theta_2^M(t))\in\mathbb{H}$ by 
\begin{gather}
\begin{cases}
\displaystyle \theta_1^M(t) \coloneqq -\langle \nabla \rangle^{\frac{4}{9}d} Z^\eta_{1,M}(t) + \alpha_M \langle \nabla \rangle^{\frac{4}{9}d}(g_M^3),  \\
\displaystyle \theta_2^M(t) \coloneqq -\langle \nabla \rangle^{\frac{4}{9}d} Z^\eta_{2,M}(t) + \beta_M \langle \nabla \rangle^{\frac{4}{9}d}(g_M^5) 
\end{cases}
\end{gather}
where $\eta\in(0,1)$ is an arbitrary fixed constant, $g_M$ is a deterministic function such that
\begin{equation}\label{tobi}
\int_{\mathbb{T}^d} |g_M(x)|^{9s}dx \asymp M^{sd-d}
\end{equation}
as $M\rightarrow \infty$ for any $s\in\mathbb{R}_+$, and $\alpha_M, \beta_M>0$ are defined by
\[ \beta_M^2 = \frac{\mathbb{E}[Y_{1,M}(\eta)^2 + Y_{2,M}(\eta)^2] }{\int_{\mathbb{T}^d}g_M^{10}dx}  = 2\frac{\mathbb{E}[Y_{2,M}(\eta)^2]}{\int_{\mathbb{T}^d}g_M^{10}dx}   \]
and
\[  \alpha_M\beta_M^3 -100\alpha_M^6 = \frac{1}{2}\alpha_M\beta_M^3 \iff 200\alpha_M^5 = \beta_M^3.\]
See \eqref{popo} for the definition of $Z^\eta_{i,M}(t)$. Such $g_M$ actually exists in view of Lemma \ref{haki}.
Note that there holds $\alpha_M \asymp \beta_M \asymp 1$ as $M\rightarrow \infty$ because 
\[ \mathbb{E}[Y_{i,M}(\eta)^2] = \eta \sigma_{\frac{4}{9}d,M} \asymp M^{\frac{d}{9}}   \]
and
\[ \int_{\mathbb{T}^d}g_M^{10}dx \asymp M^{\frac{d}{9}}   \]
from \eqref{dyn} and \eqref{tobi}. We write $\boldsymbol{a}_M = (\alpha_M g_M^3, \beta_M g_M^5)$ in the following.
Then, for $N>M$,
\begin{align}
&-R_N^{\lambda F}(\mathbb{Y}+I(\theta^M)) \notag \\
&= -\lambda \int_{\mathbb{T}^d} F(\boldsymbol{a}_M)dx -\lambda \sum_{\beta\in\{ (1,3), (6,0) \}}c_\beta \sum_{0<\gamma \le \beta, \gamma\in\mathbb{N}^2} \binom{\beta}{\gamma}\int_{\mathbb{T}^d} H_\gamma \left( \mathbb{Y}_N - \mathbb{Y}_M(\eta); \sigma_{\frac{4}{9}d,N}  \right)\boldsymbol{a}_M^{\beta -\gamma}dx \notag \\
&= -\lambda \int_{\mathbb{T}^d} F(\boldsymbol{a}_M)dx -\lambda \sum_{\beta\in\{ (1,3), (6,0) \}}c_\beta \sum_{0<\gamma \le \beta} \binom{\beta}{\gamma} \sum_{0\le \tilde \gamma \le \gamma}\binom{\gamma}{\tilde \gamma} \int_{\mathbb{T}^d} :\mathbb{Y}_N^{\tilde \gamma}:\left(-\mathbb{Y}_M(\eta)^{\gamma- \tilde\gamma}\right) \boldsymbol{a}_M^{\beta -\gamma}dx \notag \\ 
&\rightarrow -\lambda \int_{\mathbb{T}^d} F(\boldsymbol{a}_M)dx-\lambda \sum_{\beta\in\{ (1,3), (6,0) \}}c_\beta \sum_{0<\gamma \le \beta} \binom{\beta}{\gamma} \sum_{0\le \tilde \gamma \le \gamma} \binom{\gamma}{\tilde \gamma} \int_{\mathbb{T}^d} :\mathbb{Y}^{\tilde \gamma}:\left( -\mathbb{Y}_M(\eta)^{\gamma- \tilde\gamma}\right) \boldsymbol{a}_M^{\beta -\gamma}dx \notag \\ 
&\eqqcolon A_M + B_{M} 
\end{align}
almost surely as $N\rightarrow \infty$, where $c_{(1,3)}=1$ and $c_{(6,0)}=-100$.
Therefore, 
\begin{align}
&\lim_{L\rightarrow\infty}\lim_{N\rightarrow\infty} \mathbb{E} \Bigg[ -\min \left( R_N^{\lambda F}(\mathbb{Y}+I(\theta^M)), L \right) +  K\left|\sum_{i=1,2} \int_{\mathbb{T}^d}  H_2 (P_N Y_i + P_N I_i(\theta^M);\sigma_{\alpha,N})    \right|^b  \notag \\
& \ \ \ \ \ \ \ \ \ \ \ \ \ \ \ \ \ \ \ \ \ \ \ \ \ \ \ \ \ \ \ \ \ \ \ \ \ \ \ \ \ \ \ \ \ \ \ \ \ \ \ \ \ \ \ \ \ \ \ \ \ \ \ \ \ \ \ \ \ \ \ \ \ \ \ \ \ \ \ \ \ \   + \frac{1}{2}\int^1_0 \| P_N^{\otimes n} \theta^M (t)\|^2_{L^2(\mathbb{T}^d)}dt \Bigg] \notag \\
&= \mathbb{E} \left[ A_M + B_M  \right] + \mathbb{E} \left[ K\left| C_M   \right|^b \right] + \frac{1}{2}\mathbb{E} \left[ \int^1_0 \| \theta^M (t)\|^2_{L^2(\mathbb{T}^d)}dt  \right],\label{kekka}
\end{align}
where 
\begin{align}
C_M \coloneqq  &\int_{\mathbb{T}^d}  \left\{ :Y_{1}^2: - 2Y_{1}Y_{1,M}(\eta) + 2Y_{1}\alpha_M g_M^3 + (-Y_{1,M}(\eta)+ \alpha_Mg_M^3)^2  \right\}dx    \notag \\ 
& +  \int_{\mathbb{T}^d}  \left\{ :Y_{2}^2: - 2Y_{2}Y_{2,M}(\eta) + 2Y_{2}\beta_M g_M^5 + (-Y_{2,M}(\eta)+ \beta_Mg_M^5)^2  \right\}dx. \label{revise}
\end{align}
From Lemma \ref{gausstoku}, \eqref{tobi} and $\sigma_{\frac{4}{9}d,M}\asymp M^{\frac{d}{9}}$,
\begin{align}
\mathbb{E} \left[ B_M  \right] &= -\lambda \mathbb{E} \left[ \sum_{\beta\in\{ (1,3), (6,0) \}}c_\beta \sum_{0<\gamma \le \beta, \gamma\in\mathbb{N}^2} \binom{\beta}{\gamma} \sum_{0\le \tilde \gamma \le \gamma, \tilde \gamma \in\mathbb{N}^2} \binom{\gamma}{\tilde \gamma} \int_{\mathbb{T}^d} :\mathbb{Y}_M^{\tilde \gamma}:\left( -\mathbb{Y}_M(\eta)^{\gamma- \tilde\gamma}\right) \boldsymbol{a}_M^{\beta -\gamma}dx \right] \notag \\ 
&= -\lambda \mathbb{E} \left[ \sum_{\beta\in\{ (1,3), (6,0) \}}c_\beta \sum_{0<\gamma \le \beta, \gamma\in\mathbb{N}^2} \binom{\beta}{\gamma}  \int_{\mathbb{T}^d} H_\gamma \left( \mathbb{Y}_M - \mathbb{Y}_M(\eta); \sigma_{\frac{4}{9}d,M}  \right)        \boldsymbol{a}_M^{\beta -\gamma}dx \right] \notag \\ 
&\lesssim \mathbb{E} \left[ \sum_{\beta\in\{ (1,3), (6,0) \}} \sum_{0<\gamma \le \beta, \gamma\in\mathbb{N}^2} \binom{\beta}{\gamma}  \int_{\mathbb{T}^d} H_\gamma \left( \frac{\mathbb{Y}_M - \mathbb{Y}_M(\eta)}{\sigma_{\frac{4}{9}d,M}^{\frac{1}{2}}}; 1  \right) \sigma_{\frac{4}{9}d,M}^{\frac{|\gamma|}{2}}       | \boldsymbol{a}_M^{\beta -\gamma}|dx \right] \notag \\ 
&\lesssim \max_{\beta\in\{ (1,3), (6,0)\},0<\gamma \le \beta}  M^{ \frac{|\gamma|}{18}d+\frac{(\beta-\gamma)\cdot (3,5)}{9}d-d   } = M^{\frac{13}{18}d}.\label{kekka1}
\end{align}
Moreover, from \eqref{tobi} and $\alpha_M \asymp \beta_M \asymp 1$ as $M\rightarrow \infty$,
\begin{align}
A_M = -\lambda \int_{\mathbb{T}^d} F(\boldsymbol{a}_M)dx &= -\lambda \left( \alpha_M\beta_M^3 - 100\alpha_M^6   \right) \int_{\mathbb{T}^d} g_M^{18} dx \notag \\
&= -\frac{\lambda}{2}\alpha_M\beta_M^3  \int_{\mathbb{T}^d} g_M^{18} dx \asymp  -\lambda M^d. \label{kekka2}
\end{align}
On the other hand, from \eqref{revise}, we can write
\begin{align}
C_M &= \sum_{i=1,2} \int_{\mathbb{T}^d}  \left\{ :Y_{i}^2: - 2\left( Y_{i}Y_{i,M}(\eta) - \mathbb{E}[ Y_{i}Y_{i,M}(\eta)]\right) + (Y_{i,M}(\eta)^2 - \mathbb{E}[Y_{i,M}(\eta)^2])   \right.    \notag\\
&\ \ \ \ \ \  + \beta_M^2 g_M^{10} + 2\beta_M (Y_{2}-Y_{2,M}(\eta))g_M^5 + \sum_{i=1,2} \left(-2\mathbb{E}[ Y_{i}Y_{i,M}(\eta)] + \mathbb{E}[Y_{i,M}(\eta)^2] \right)  \notag\\
&\ \ \ \ \ \ + \left. \alpha_M^2 g_M^6 + 2\alpha_M (Y_1 - Y_{1,M}(\eta))g_M^3 \right\}dx.
\end{align}
Here, from the definition of $\beta_M$, there holds
\[ \int_{\mathbb{T}^d} \Big\{ \beta_M^2 g_M^{10} + \sum_{i=1,2} \left( -2\mathbb{E}[ Y_{i}Y_{i,M}(\eta)] + \mathbb{E}[Y_{i,M}(\eta)^2]\right) \Big\}dx =  \int_{\mathbb{T}^d} \left\{ \beta_M^2 g_M^{10} -2\mathbb{E}[Y_{1,M}(\eta)^2]\right\}dx =0  \]
and from Lemma \ref{haki}, \eqref{tobi} and $\mbox{supp}(\hat g_M)\subset \{ |l|\le M\}$,
\begin{align*}
\left|\int_{\mathbb{T}^d} (Y_{2}-Y_{2,M}(\eta))g_M^5 dx \right|&\le \| Y_{2}-Y_{2,M}(\eta)  \|_{W^{-\frac{d}{18}-\epsilon,\infty}(\mathbb{T}^d)} \| g_M^5  \|_{W^{\frac{d}{18}+2\epsilon,1}(\mathbb{T}^d)}\\
&\le Q_M  \| g_M  \|_{H^{\frac{d}{18}+2\epsilon}(\mathbb{T}^d)}  \| g_M  \|_{L^{9}(\mathbb{T}^d)}^4 \le Q_M M^{\frac{d}{18}+2\epsilon} \| g_M \|_{L^{2}(\mathbb{T}^d)} \le Q_M.
\end{align*}
Similarly, we get
\begin{align*}
\left|\int_{\mathbb{T}^d} (Y_{1}-Y_{1,M}(\eta))g_M^3 dx \right| \le Q_M.
\end{align*}
Therefore, noting that
\[ \left| \int_{\mathbb{T}^d}\left\{ Y_{i}Y_{i,M}(\eta) - \mathbb{E}[ Y_{i}Y_{i,M}(\eta)] \right\} dx\right| \le Q_M \   \]
and 
\[\left| \int_{\mathbb{T}^d}\left\{ Y_{i,M}(\eta)^2 - \mathbb{E}[Y_{i,M}(\eta)^2]\right\} dx\right|  \le Q_M, \]
which can be proven similarly to Proposition \ref{gono},
and
\[\left| \int_{\mathbb{T}^d}  \alpha_M^2 g_M^{6} dx \right| \lesssim 1 \]
which follows from $\alpha_M \asymp 1$ and \eqref{tobi},
we get
\begin{equation}\label{kekka3}
\mathbb{E} \left[  K \left| C_M   \right|^b           \right] \lesssim 1
\end{equation}
uniformly in $M \in\mathbb{N}$. Moreover, from Lemmas \ref{hq3} and \ref{haki},
\begin{align}
&\frac{1}{2}\mathbb{E} \left[ \int_0^1 \|  \theta^M(t) \|^2_{L^2(\mathbb{T}^d)}dt  \right]  \notag\\
&= \frac{1}{2}\mathbb{E} \left[ \int_0^1  \int_{\mathbb{T}^d} \left\{ \left(\langle \nabla \rangle^{\frac{4}{9}d}Z^\eta_{1,M}(t)  \right) ^2 + \left(\alpha_M \langle \nabla \rangle^{\frac{4}{9}d}(g_M^3)  \right) ^2    + \left(\langle \nabla \rangle^{\frac{4}{9}d}Z^\eta_{2,M}(t)  \right) ^2 + \left(\alpha_M \langle \nabla \rangle^{\frac{4}{9}d}(g_M^5)  \right) ^2   \right\}    dx     dt  \right]  \notag\\
&\lesssim M^d.\label{kekka4}
\end{align}
From \eqref{kekka}, \eqref{kekka1}, \eqref{kekka2}, \eqref{kekka3} and \eqref{kekka4}, we obtain 
\begin{align}
&\lim_{L\rightarrow\infty}\lim_{N\rightarrow\infty} \mathbb{E} \Bigg[ -\min \left( R_N^{\lambda F}(\mathbb{Y}+I(\theta^M)), L \right) +  K\left| \int_{\mathbb{T}^d}  H_2 (P_N Y_2 + P_N I_2(\theta^M);\sigma_{\alpha,N})    \right|^b  \notag \\
& \ \ \ \ \ \ \ \ \ \ \ \ \ \ \ \ \ \ \ \ \ \ \ \ \ \ \ \ \ \ \ \ \ \ \ \ \ \ \ \ \ \ \ \ \ \ \ \ \ \ \ \ \ \ \ \ \ \ \ \ \ \ \ \ \ \ \ \ \ \ \ \ \ \ \ \ \ \ \ \ \ \   + \frac{1}{2}\int^1_0 \| P_N^{\otimes n} \theta^M (t)\|^2_{L^2(\mathbb{T}^d)}dt \Bigg] \notag \\
& \lesssim -c\lambda M^d +M^{\frac{13}{18}d}+ M^d + C \rightarrow -\infty \notag
\end{align}
as $M\rightarrow \infty$ for sufficiently large $\lambda >0$.

\end{proof}

\appendix 
\section{Convergence of Wick products}\label{appen}

In this section, we give a proof of the convergence of Wick products and thus the well-definedness of the renormalized density function of the Gibbs measure.
Recall that we introduced the following notations in Sections \ref{jai} and \ref{varia}. For $\gamma \in \mathbb{N}^n$ and $\mathbb{Y}_N = P_N^{\otimes n}\mathbb{Y} = (Y_{1,N}, \cdots, Y_{n,N})$,
\begin{equation} \label{ssss}
:\mathbb{Y}_N^\gamma: \coloneqq H_\gamma \left( \mathbb{Y}_N;\sigma_{\alpha,N} \right), 
\end{equation}
where
\begin{equation}\label{sssss} 
\sigma_{\alpha,N} \coloneqq \sum_{l\in\mathbb{Z}^d,|l|\le N}\frac{1}{(1+|l|^2)^{\alpha}} = \mathbb{E} \left[ Y_{i,N}^2 \right]. 
\end{equation}

\begin{prop}\label{gono}
Let $\gamma \in \mathbb{N}^n$ and $|\gamma| = \gamma_1 + \cdots + \gamma_n \eqqcolon k \in \mathbb{N}$. Then, if $\alpha> \frac{k-1}{2k}d$, $:\mathbb{Y}_N^\gamma:$ converges in $W^{(\alpha -\frac{d}{2})k-\epsilon, \infty}(\mathbb{T}^d)$ almost surely as $N \rightarrow \infty$ for any $\epsilon>0$.
\end{prop}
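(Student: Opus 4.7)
The plan is to run the standard hypercontractivity argument for Wick powers of Gaussian fields in the multivariate setting, converting an $L^2(\Omega)$ estimate on the Fourier coefficients into almost sure convergence in $W^{(\alpha-d/2)k-\epsilon,\infty}(\mathbb{T}^d)$ via Kolmogorov / Sobolev embedding. First I would write out
\[
\widehat{:\mathbb{Y}_N^\gamma:}(l) = \int_{\mathbb{T}^d} :\mathbb{Y}_N^\gamma:(x)\, e_{-l}(x)\,dx,
\]
compute $\mathbb{E}[:\mathbb{Y}_N^\gamma:(x) :\mathbb{Y}_N^\gamma:(y)]$ using Lemma \ref{gauss}, the independence of the components $Y_{1,N},\dots,Y_{n,N}$, and the multiplicative structure of $H_\gamma$, obtaining
\[
\mathbb{E}\bigl[:\mathbb{Y}_N^\gamma:(x) :\mathbb{Y}_N^\gamma:(y)\bigr] = \prod_{i=1}^n \gamma_i!\, G_N(x-y)^{\gamma_i},
\]
where $G_N(x-y)=\mathbb{E}[Y_{i,N}(x)Y_{i,N}(y)] = \sum_{|m|\le N}(1+|m|^2)^{-\alpha} e_m(x-y)$ is the truncated covariance. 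Substituting this into the Fourier representation of $\mathbb{E}[|\widehat{:\mathbb{Y}_N^\gamma:}(l)|^2]$ yields
\[
\mathbb{E}\bigl[|\widehat{:\mathbb{Y}_N^\gamma:}(l)|^2\bigr] \lesssim \sum_{\substack{m_1,\dots,m_k\in\mathbb{Z}^d\\ m_1+\cdots+m_k=l,\ |m_j|\le N}} \prod_{j=1}^k \frac{1}{(1+|m_j|^2)^{\alpha}},
\]
where $k=|\gamma|$.

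Next I would estimate this convolution. By induction on $k$, using the elementary fact that for $a,b<d$ with $a+b>d$ one has $\sum_{m\in\mathbb{Z}^d}\langle m\rangle^{-a}\langle l-m\rangle^{-b}\lesssim \langle l\rangle^{-(a+b-d)}$, one gets
\[
\mathbb{E}\bigl[|\widehat{:\mathbb{Y}_N^\gamma:}(l)|^2\bigr] \lesssim \langle l\rangle^{-(2k\alpha -(k-1)d)},
\]
uniformly in $N$, precisely under the hypothesis $\alpha>\frac{k-1}{2k}d$ (which is what makes the exponent positive enough at each induction step). The same argument, applied to the difference $:\mathbb{Y}_N^\gamma: - :\mathbb{Y}_M^\gamma:$ with $M<N$, produces an estimate of the same form but restricted to frequencies with $\max_j |m_j|>M$, giving a bound of the shape $o(1)\cdot\langle l\rangle^{-(2k\alpha-(k-1)d)}$ (with a polynomial gain in $M^{-\delta}$) as $M\to\infty$. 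This yields Cauchy convergence in an $L^2(\Omega;H^s(\mathbb{T}^d))$-sense for every $s<(\alpha-\tfrac{d}{2})k$.

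The main technical step is then to promote this $L^2(\Omega)$ bound to almost sure convergence in $W^{(\alpha-d/2)k-\epsilon,\infty}(\mathbb{T}^d)$. For this I would use Gaussian hypercontractivity (Nelson's equivalence of moments): since each Fourier mode of $:\mathbb{Y}_N^\gamma:$ lives in the $k$-th inhomogeneous Wiener chaos associated with $(W_1,\dots,W_n)$, one has
\[
\mathbb{E}\bigl[|\widehat{:\mathbb{Y}_N^\gamma:}(l) - \widehat{:\mathbb{Y}_M^\gamma:}(l)|^{2p}\bigr]^{1/(2p)} \lesssim_{p,k}\ \mathbb{E}\bigl[|\widehat{:\mathbb{Y}_N^\gamma:}(l) - \widehat{:\mathbb{Y}_M^\gamma:}(l)|^2\bigr]^{1/2}
\]
for every $p\ge 1$. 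Combining this with the previous frequency estimates, for each $s<(\alpha-d/2)k$ and any $p<\infty$,
\[
\mathbb{E}\bigl[\|:\mathbb{Y}_N^\gamma: - :\mathbb{Y}_M^\gamma:\|_{W^{s,2p}(\mathbb{T}^d)}^{2p}\bigr] \longrightarrow 0
\]
at a polynomial rate in $\min(N,M)$. Choosing $p$ so large that $s-\tfrac{d}{2p}>(\alpha-\tfrac{d}{2})k-\epsilon$ and invoking the Sobolev embedding $W^{s,2p}(\mathbb{T}^d)\hookrightarrow W^{s-d/(2p),\infty}(\mathbb{T}^d)$ (Lemma \ref{embed} combined with a standard lifting argument), one obtains a summable bound on $\mathbb{E}[\|:\mathbb{Y}_{2^{j+1}}^\gamma: - :\mathbb{Y}_{2^j}^\gamma:\|_{W^{(\alpha-d/2)k-\epsilon,\infty}(\mathbb{T}^d)}^{2p}]$; a Borel--Cantelli argument along the dyadic subsequence $N=2^j$ then upgrades to almost sure convergence in $W^{(\alpha-d/2)k-\epsilon,\infty}(\mathbb{T}^d)$, and a standard interpolation fills in the non-dyadic $N$. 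The delicate point throughout is the verification of the convolution estimate in the second paragraph: the threshold $\alpha>\frac{k-1}{2k}d$ is exactly the borderline at which the iterated sum fails to be uniformly bounded in $l$, and one must carry out the induction carefully (and include the logarithmic correction when $\alpha=d/2$) to obtain the sharp regularity $(\alpha-d/2)k-\epsilon$.
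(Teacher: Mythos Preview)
Your proposal is correct and follows essentially the same route as the paper: compute the second moment of $:\mathbb{Y}_N^\gamma:$ via Lemma~\ref{gauss} and independence of the components, reduce to the $k$-fold convolution sum $\sum \langle m_1\rangle^{-2\alpha}\cdots\langle m_k\rangle^{-2\alpha}$, use hypercontractivity to pass to $L^{2p}(\Omega)$, then Sobolev embed $W^{s,2p}\hookrightarrow W^{s-d/(2p),\infty}$ and conclude by Borel--Cantelli on differences. The paper organizes the computation in physical space (estimating $\mathbb{E}[|\langle\nabla\rangle^\beta(:\mathbb{Y}_N^\gamma:)(x)|^2]$ directly) rather than mode-by-mode, and outsources the convolution bound to an external reference rather than doing your induction, but the argument is the same.
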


\begin{proof}
We only prove 
\begin{equation}\label{gonon}
\mathbb{E} \left[ \| :\mathbb{Y}_N^\gamma:  \|^p_{W^{(\alpha -\frac{d}{2})k-\epsilon, \infty}(\mathbb{T}^d)}  \right] < \infty
\end{equation}
uniformly in $N\in\mathbb{N}$ for $p \gg 1$.
The almost sure convergence of $:\mathbb{Y}_N^\gamma:$ can be proven by considering the difference $:\mathbb{Y}_N^\gamma: - :\mathbb{Y}_M^\gamma:$ and applying a standard argument with Borel-Cantelli Lemma, see \cite[Proposition 2.1]{wave}, for example.
From the equivalence of the moments of Gaussian polynomials, 
\begin{align}
\mathbb{E} \left[ \| :\mathbb{Y}_N^\gamma:  \|^{2p}_{W^{\beta, 2p}(\mathbb{T}^d)}  \right] &= \mathbb{E} \left[ \| \langle \nabla \rangle^\beta  \left(:\mathbb{Y}_N^\gamma:\right)  \|^{2p}_{L^{2p}(\mathbb{T}^d)}  \right] \notag \\
&\lesssim \int_{\mathbb{T}^d} \mathbb{E} \left[ \left| \langle \nabla \rangle^\beta  \left(:\mathbb{Y}_N^\gamma:\right) (x)     \right|^2   \right]^p dx. \label{kakaka}
\end{align}
From Lemma \ref{gauss}, \eqref{ssss} and \eqref{sssss}, 
\begin{align}
&\mathbb{E} \left[ \left| \langle \nabla \rangle^\beta  \left(:\mathbb{Y}_N^\gamma:\right) (x)     \right|^2   \right] \notag\\
&=  \langle \nabla \rangle^\beta_x \langle \nabla \rangle^\beta_y\   \mathbb{E} \left[ :\mathbb{Y}_N^\gamma: (x) :\mathbb{Y}_N^\gamma: (y)        \right]\  \bigg|_{x=y}  \notag\\
&= \langle \nabla \rangle^\beta_x \langle \nabla \rangle^\beta_y\ \prod_{i=1}^n  \mathbb{E} \left[ Y_{i,N}(x)Y_{i,N}(y)       \right]^{\gamma_i}\  \bigg|_{x=y}  .
\end{align}
Because we can calculate as 
\begin{align}
\mathbb{E} \left[ Y_{i,N}(x)Y_{i,N}(y)   \right] &\simeq \mathbb{E} \left[ \sum_{l\in\mathbb{Z}^d,\ |l|\le N}\frac{1}{\langle l \rangle^\alpha}\beta_i^l(t)e_l(x)  \sum_{l\in\mathbb{Z}^d,\ |l|\le N}\frac{1}{\langle l \rangle^\alpha}\beta_i^l(t)e_l(y)   \right] \notag \\
&=  \sum_{l\in\mathbb{Z}^d,\ |l|\le N}\frac{1}{\langle l \rangle^{2\alpha}}e_l(x) e_{-l} (y),\notag
\end{align}
we get
\begin{align}
&\mathbb{E} \left[ \left| \langle \nabla \rangle^\beta  \left(:\mathbb{Y}_N^\gamma:\right) (x)     \right|^2   \right] \notag\\
&\le \sum_{l_1 , \cdots, l_k \in \mathbb{Z}^d}  \frac{1}{\langle l_1 \rangle^{2\alpha} \cdots \langle l_k \rangle^{2\alpha}}\langle l_1 + \cdots + l_k \rangle^{2\beta} < +\infty  
\end{align}
if $\beta < (\alpha -\frac{d}{2})k$ and $\alpha>\frac{k-1}{2k}d$. This can be checked by applying \cite[Lemma 2.3]{arata}, for example. Therefore, from \eqref{kakaka} and Sobolev embedding, we get \eqref{gonon} by taking sufficiently large $p>1$. 

\end{proof}

\end{document}